\newtheorem{theorem}{Theorem}[section]
\newtheorem{lemma}[theorem]{Lemma}
\newtheorem{definition}{Definition}[section]
\newtheorem{remark}{Remark}[section]
\DeclareMathSymbol{,}{\mathpunct}{operators}{"2C}
\DeclareMathSymbol{.}{\mathpunct}{operators}{"2E}
\numberwithin{equation}{section}
\begin{document}
\title{Instability and spectrum of the linearized two-phase fluids interface problem at shear flows}
\author[X. Liu]{Xiao Liu}
\address[X. Liu]{School of Mathematics, Georgia Institute of Technology, Atlanta, GA 30332}
\email{xliu458@gatech.edu}

\begin{abstract}
This paper is concerned with the 2-dim two-phase interface Euler equation linearized at a pair of monotone shear flows in both fluids. We extend the Howard's Semicircle Theorem and study the eigenvalue distribution of the linearized Euler system. Under certain conditions, there are exactly two eigenvalues for each fixed wave number $k\in \mathbb{R}$ in the whole complex plane. We provide sufficient conditions for spectral instability arising from some boundary values of the shear flow velocity. A typical mode is the ocean-air system in which the density ratio of the fluids is sufficiently small.  We give a complete picture of eigenvalue distribution for a certain class of shear flows in the ocean-air system. 
\end{abstract}
\maketitle
\begingroup
\allowdisplaybreaks
\section{Introduction}\label{S:introduction}
In this paper, We study two-phase interface system which is modeled by the two dimensional Euler problem. Both fluids are considered immiscible and incompressible (see, for example, \cite{janssen}). We use superscript $"+"$ to denote notations in the upper fluid and subscript $"-"$ for the lower one. At time $t\geq 0$, the above fluid occupies $\Omega_t^+$, and the fluid below inhabits the region $\Omega_t^-$. The fixed constants $h_\pm>0$ are the locations of flat lid of the above fluid  (see, for example, \cite{janssen}) and flat bed of the fluid below. We denote $S_t:=\partial \Omega_t^-\cap \partial\Omega_t^+=\{(t,x)| x_2=\eta(t,x_1)\}$ as the interface, which is considered as a graph of a smooth function $\eta$. In addition, we assume that 
\begin{align*}
    \Omega_t^+\cup \Omega_t^-\cup S_t=\mathbb{T}\times (-h_-, h_+),
\end{align*}
where $\mathbb{T}:=\mathbb{R}\setminus 2\pi \mathbb{Z}$. Suppose the densities of both fluids are constants $\rho^\pm>0$ for $x\in \Omega_t^\pm$. Let $v^\pm=v^\pm(t,x)\in \mathbb{R}^2$ be the velocity field and $p^\pm=p^\pm(t,x)\in \mathbb{R}$ be the pressure. Then $v^\pm$ and $p^\pm$ satisfy the two dimensional incompressible Euler system as following.
\begin{subequations}\label{E:Euler}
\begin{equation}\label{E:Euler-1}
\partial_t v^\pm+v^\pm\cdot \nabla v^\pm+\frac{1}{\rho^\pm}\nabla p^\pm+g\vec{e}_2=0 \qquad \text{in}\quad \Omega_t^\pm,
\end{equation}
\begin{equation}\label{E:Euler-2}
\nabla\cdot v^\pm=0 \qquad \text{in}\quad \Omega_t^\pm,
\end{equation}
where $g$ is the gravitational acceleration and $\vec{e}_2$ is the unit vector in $x_2$-axis. Since $v^\pm$ restricted to $S_t$ is a boundary velocity, the motion of the interface satisfies the kinematic boundary condition.
\begin{equation}\label{E:Euler-3}
\partial_t \eta=v^\pm\cdot (-\partial_{x_1}\eta, 1)^T  \qquad x\in S_t. 
\end{equation}
Considering the surface tension, we also impose the dynamic boundary condition
\begin{equation}\label{E:Euler-4}
p^+(t,x)-p^-(t,x)=\sigma \kappa \qquad x\in  S_t,
\end{equation}
where $\sigma > 0$ and $\kappa=\frac{\eta_{x_1x_1}}{(1+\eta_{x_1}^2)^{3/2}}$ is the mean curvature of $S_t$ at $x$ which corresponds to the surface tension.  We assume that the upper fluid has finite altitude. It is justified that the behavior of the flow does not strongly affect the dynamics near the lower fluid if it evanesces at high altitude (e.g. \cite{janssen}). The rigid flat lid of the upper fluid and bed of the lower fluid imply that
\begin{equation}\label{E:Euler-5}
v^\pm\cdot \vec{e}_2=0 \qquad  x_2=\pm h_\pm. 
\end{equation}
\end{subequations}
The local well-posedness theory for the interface problem (1.1) has been well studied. When surface tension is considered, the full nonlinear problem is locally well-posed (cf., \cite{ChengShkollerCoutand}, \cite{Shatah2}).
For both rotational and irrotational problem, the linearized system is ill-posed if surface tension is ignored\cite{Bea}.

It is well known that shear flows are a fundamental class of stationary solutions in the form of
\begin{equation}\label{E:Shear flow}
v_*^\pm(t,x)=(U^\pm (x_2), 0), \quad S_*=\mathbb{T}\times\{0\}.
\end{equation}

Our goal is to analyze this two-phase fluids interface system linearized at a pair of uniformly monotone shear flows. 
\begin{equation}\label{E:U monotone}
    U^\pm\in C^{l_0}, \; l_0\geq 6, \; (U^+)'\neq 0, \;\ x_2\in [0, h_+], \;\ \text{and} \;\ (U^-)'\neq 0, \;\ x_2\in [-h_-, 0]. 
\end{equation}

\begin{subsection}{Linearization}
In this subsection, we linearize the Euler system (\ref{E:Euler}) near shear flows (\ref{E:Shear flow}). The linearization was previously obtained in \cite{Oli}, but we adopt a slightly more direct approach as in \cite{LZ}.
We consider an one-parameter family of solutions $\big(S_t(\theta), v^\pm(\theta,t,x), p^\pm(\theta, t,x)\big)$ of system (\ref{E:Euler}) with 
\begin{align*}
    \big(S_t(0), v^\pm(0,t,x), \nabla p^\pm(0,t,x)\big)=(S_*, v_*^\pm, -g\rho^\pm\vec{e}_2).
\end{align*}
We abuse the notations and let  $v^\pm, p^\pm, \eta$ be the linearized solution in the following way.
\begin{align*}
    v^\pm(t,x)=\big(v_1^\pm(t,x), v_2^\pm(t,x)\big):=\lim_{\theta\rightarrow 0}\partial_\theta v^\pm(t, x),\\ p^\pm(t,x):=\lim_{\theta\rightarrow 0}\partial_\theta p^\pm(t, x), \;\ \eta(t, x_1):=\lim_{\theta \rightarrow 0 }\partial_\theta \eta(t, x_1).
\end{align*}
We differentiate (\ref{E:Euler-1}) with respect to $\theta$ and send $\theta$ to $0$,
\begin{equation}\label{E:d-Euler-theta}
    \partial_t v^\pm+v_*^\pm\cdot \nabla v^\pm+\nabla v_*^\pm\cdot v^\pm+\frac{1}{\rho^\pm}\nabla p^\pm=0.
\end{equation}
Considering the second component of this equation, we have
\begin{subequations}\label{E:LEuler}
\begin{equation}\label{E:LEuler-2}
     \partial_t v_2^\pm+U^\pm(x_2)\partial_{x_1}v_2^\pm+\frac{1}{\rho^\pm}\partial_{x_2}p^\pm=0, \qquad (x_1, \pm x_2)\in \mathbb{T}\times(0,  h_\pm).
\end{equation}
We apply the divergence operator to \eqref{E:d-Euler-theta} to obtain that
\begin{align*}
    \nabla\cdot(v_*^\pm\cdot \nabla v^\pm+v^\pm\cdot \nabla v_*^\pm)+\frac{1}{\rho^\pm}\Delta  p^\pm=0.
\end{align*}
Sending $\theta$ to 0 in the above equation, we obtain that
\begin{equation}\label{E:LEuler-1}
-\frac{1}{\rho^\pm}\Delta p^\pm=2(U^\pm)'(x_2)\partial_{x_1}v_2^\pm,\qquad  (x_1, \pm x_2)\in \mathbb{T}\times(0,  h_\pm).
\end{equation}

We differentiate (\ref{E:Euler-3}) in $\theta$ to obtain that
\begin{equation}\label{E:LEuler-3}
    \partial_t \eta= v_2^\pm-U^\pm(x_2)\partial_{x_1}\eta, \qquad \qquad \{x_2=0\}.
\end{equation}
Now we consider the variation of $S_t$. We notice that (\ref{E:Euler-4}) means that 
\begin{align*}
    p^+\big(\theta, t, x_1, \eta(\theta , t, x_1)\big)-p^-\big(\theta, t, x_1, \eta(\theta , t, x_1)\big)=\sigma \kappa\big(\theta, t, x_1, \eta(\theta, t, x_1)\big).
\end{align*}
Taking derivative with respect to $\theta $ and sending $\theta$ to 0, we obtain that
\begin{equation}\label{E:LEuler-4}
p^+(0)-p^-(0)=g\eta(\rho^+-\rho^-)+\sigma \partial_{x_1}^2\eta. 
\end{equation}
Linearizing (\ref{E:Euler-5}), we have
\begin{equation}\label{E:LEuler-5}
v_2^\pm(x_1, \pm h_\pm, t)=0. 
\end{equation}
We restrict (\ref{E:LEuler-3}) to $\pm x_2=h_\pm$ and obtain that 
\begin{equation}\label{E:LEuler-p-BC}
    \partial_{x_2} p^\pm =0, \qquad  \pm x_2=h_\pm. 
\end{equation}
\end{subequations}
The system \eqref{E:LEuler} form a two-phase fluids interface problem linearized at a pair of shear flows (\ref{E:Shear flow}). 
In the capillary gravity water wave problem ($\rho^+=0$),  the pressure $p$ can be covered by a boundary value problem of elliptic system (\ref{E:LEuler-1}), (\ref{E:LEuler-4}), and (\ref{E:LEuler-p-BC}). In this two-phase fluids interface problem($\rho^+>0$), the pressure can be expressed in terms of velocity. We refer the readers to Section 2 in \cite{SZ}. Hence, the linearized system (\ref{E:LEuler}) can be viewed as an evolutionary problem of two unknowns $(v_2, \eta)$. 
\end{subsection}
The goal of this paper is to study the instability of monotone shear flows.

\begin{subsection}{Background}
We notice that the variable coefficients in the linearized system (\ref{E:LEuler}) depend only on $x_2$. Hence, each Fourier mode in $x_1$ is decoupled from other modes. To study the linearized system, it is natural to seek eigenvalues and eigenfunctions in the form of
\begin{equation}\label{E:E-function form}
\big(v_2^\pm(t, x_1, x_2), \eta(t, x_1), p^\pm(t, x_1, x_2)\big)=\big(v_2^\pm(x_2), \eta, p^\pm(x_2)\big)e^{ik(x_1-ct)},\qquad  k\in \mathbb{Z}\setminus \{0\} 
\end{equation}
where the eigenvalues take the form  $\lambda=-ikc$ with the wave speed $c=c_R+ic_I\in \mathbb{C}$ and wave number $k$. The linear solution is spectrally unstable if a wave speed $c$, which appears in conjugate pairs, has a positive imaginary part and wave number $k>0$. Due to the symmetry of the spectrum, to find instability it suffices to consider the case of $k>0$ and seek solution with $c_I>0$. 

Due to the significance in mathematics and physics, the two-phase fluids interface problem linearized at shear flows has been studied by some mathematicians. The simplest situation is the classical Kelvin-Helmholtz model where the upper layer fluid is air and the ocean stays below. In this model, the ocean is at rest and the velocity of the air is uniform, i.e., $U^-\equiv 0$, $ U^+\equiv U_0$, for some constant $U_0$. Without surface tension, it is (linearly) unstable if $U_0\neq 0$. But with surface tension, the instability appears at very small wavelength when the wind speed becomes large. This is irrelevant with the practical phenomenon  \cite{Kelvin}. Later on, in a series of papers  \cite{Mil, miles592, miles593} written by Miles, the wind speed is assumed to be a shear flow. Miles considered  the wind generation process as the resonance phenomenon, which is the so-called quasi-laminar model. The atmosphere can be treated as a perturbation of vacuum when the density ratio is very small. If the water has infinite depth and the surface tension is ignored,  the dispersion relation is $c_k=\sqrt{g/k}$. Miles concluded that (linear) instability occurs if a critical layer exists where the unperturbed phase speed meets the wind speed at some $x_2$ level, i.e. $c_k = U^+(x_2)$, $ (U^+)'(x_2)\neq 0$, and $(U^+)''(x_2) < 0$.
B$\ddot{u}$hler-Shatah-Walsh-Zeng \cite{Oli} presented a rigorous derivation of the linearized evolution equations near a pair of shear flows. They also justified Miles's conclusion \cite{Mil}  that the existence of critical layer may enable a resonance between the shear flow in the air and capillary gravity waves in the ocean, and showed linear instability of the shear flow in the air above the stationary water. 

A related problem is the one fluid problem, which is the so-called water wave problem. There are lots of classical results on the spectra of the 2d linearized Euler equation around shear flows $(U(x_2), 0)$ in a fixed channel(cf., \cite{Dra}).  Rayleigh \cite{Ray} gave a necessary condition for linear instability that the shear velocity profile should have an inflection point. Howard \cite{How} showed that any unstable eigenvalue must have wave speed $c$ lie in the complex plane within an upper semicircle,
\begin{align*}
    (c_R-\frac{U_{max}+U_{min}}{2})^2+c_I^2\leq (\frac{U_{max}-U_{min}}{2})^2, \;\  c_I>0.
\end{align*}
For a class of shear flows, the rigorous bifurcation of unstable eigenvalues was proved, e.g., in \cite{FH98}, \cite{Lin03}. 
Lin showed that for a certain class of shear flows, the neutral limiting wave speed must be an inflection value of the velocity profile and proved the global bifurcation of unstable modes from neutral modes rigorously \cite{Lin03}. 
There are also some results regarding the linearized free boundary problem of gravity waves at shear flows. Yih \cite{Yih} showed that if the shear velocity profile $U$ is monotone without inflection value, there are no singular neutral modes with $c$ in the interior of the range of $U$. He also proved the existence of non-singular neutral modes with real wave speed outside the range of $U$ and extended the Semicircle Theorem to the linearized gravity water wave near shear flows.  For a certain class of shear flows, Hur and Lin \cite{Hur, HL13}  proved that the linear instability may occur only when the wave speed $c$ is near the value of $U$ at bottom, critical values, or inflection values of $U$. They also gave an open set of wave numbers where linear instability exists. M. Renardy and Y. Renardy \cite{Ren} found examples showing that instability can arise from these three locations through numerical methods. In the author's recent work with Zeng \cite{LZ}, we considered capillary gravity water waves linearized at monotone shear flows and proved that in contrast to the fixed channel flow, there are exactly two non-singular modes (i.e. the corresponding eigenvalue problem has nontrivial solution when $c$ is outside the range of $U$) for each wave number $k$ under certain conditions  which are real for all high wave numbers. We also obtained the bifurcation of unstable eigenvalues from inflection values of $U$ and value of $U$ at bottom. Moreover, we gave a complete picture of eigenvalue distribution for a certain class of monotone shear flows. 
\end{subsection}

\begin{subsection}{Main Results.} The goal of this paper is to study  the eigenvalue distribution of the linear system \eqref{E:LEuler} and seek linear instability. Particularly, we consider the upper layer fluid is lighter than the bottom fluid and the perturbed free surface $\eta\neq 0$. If $\eta=0$, system \eqref{E:E-problem} is equivalent to the system which consists of two Rayleigh equations with fixed boundary without interface motion. In that case of the Euler equation linearized at  monotone shear flows, the only possible locations of $c$ where instability arises are inflection values of $U^+$ or $U^-$ (e.g. \cite{Lin03}).  We first extend the Howard's Semicircle theorem to the two-phase fluid interface problem under a mild condition which holds if the fluid above is no heavier than the one below, i.e. $\rho^+\leq \rho^-$. Throughout this paper, we let 
\begin{equation}\label{E:ab}
a:= \min\{U^-\big([-h_-, 0]\big) \cup U^+\big([0, h_+]\big)\}, \;\ b:= \max\{U^-\big([-h_-, 0]\big)\cup U^+\big([0, h_+]\big)\}. 
\end{equation}

\begin{theorem}\label{T:semicircle}
Assume that $\eta\neq 0$ and the following holds.
\begin{equation}\label{E:semicircle assumption}
  -g(\rho^+-\rho^-)+\sigma k^2\geq 0.
\end{equation}
Then any unstable wave speed $c=c_R+ic_I\in \mathbb{C}$ with $c_I>0$ must stay in the following upper semicircle
\begin{equation}\label{E:semicircle}
(c_R-\frac{a+b}{2})^2+c_I^2\leq (\frac{b-a}{2})^2, \qquad c_I>0.
\end{equation}   
\end{theorem}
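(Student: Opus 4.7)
The plan is to adapt Howard's classical semicircle argument layer-by-layer by applying the substitution $F^\pm := v_2^\pm/(U^\pm - c)$ separately on each side of the interface and then to show that, under hypothesis \eqref{E:semicircle assumption}, the interface contribution that obstructs the one-layer argument is a nonnegative real quantity. First I would note that, after Fourier separation \eqref{E:E-function form}, the $x_2$-profile $v_2^\pm$ satisfies the Rayleigh equation $(U^\pm-c)\bigl((v_2^\pm)''-k^2 v_2^\pm\bigr)-(U^\pm)''\, v_2^\pm=0$ on $(-h_-,0)$ and $(0,h_+)$, equivalent to $\bigl((U^\pm-c)^2 (F^\pm)'\bigr)' - k^2(U^\pm-c)^2 F^\pm = 0$. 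Multiplying by $\rho^\pm \overline{F^\pm}$, integrating over each layer, and integrating by parts (using $v_2^\pm(\pm h_\pm)=0$, so $F^\pm(\pm h_\pm)=0$) produces the identity
\[
\sum_\pm \rho^\pm \int (U^\pm-c)^2\bigl(|(F^\pm)'|^2 + k^2|F^\pm|^2\bigr)dx_2 = \rho^-(U^-(0)-c)^2 (F^-)'(0)\overline{F^-(0)} - \rho^+(U^+(0)-c)^2 (F^+)'(0)\overline{F^+(0)}.
\]

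Next I would evaluate the interface contribution on the right. The kinematic condition \eqref{E:LEuler-3} yields $v_2^\pm(0)=ik(U^\pm(0)-c)\eta$, hence $F^\pm(0)=ik\eta$ and $(U^\pm(0)-c)(F^\pm)'(0) = (v_2^\pm)'(0) - ik\eta\,(U^\pm)'(0)$. Using the first-component linearized momentum equation to express $p^\pm(0)$ in terms of $v_2^\pm$ and $(v_2^\pm)'$, and then invoking the dynamic condition \eqref{E:LEuler-4} (which, after Fourier, contributes the factor $-k^2\eta$ from $\partial_{x_1}^2\eta$), a direct calculation collapses the boundary sum to
\[
R := k^2|\eta|^2\bigl(-g(\rho^+-\rho^-)+\sigma k^2\bigr),
\]
which is real and nonnegative under \eqref{E:semicircle assumption}. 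Moreover $\eta\neq 0$ forces $F^\pm(0)=ik\eta\neq 0$, so with $Q^\pm:=|(F^\pm)'|^2 + k^2|F^\pm|^2$ we have $\sum_\pm \rho^\pm\int Q^\pm\,dx_2 > 0$.

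Finally, extracting real and imaginary parts mirrors the one-layer argument with density weights. The imaginary part of the identity (together with $c_I>0$) gives $c_R \sum_\pm \rho^\pm\int Q^\pm = \sum_\pm \rho^\pm\int U^\pm Q^\pm$, while the real part reads $\sum_\pm \rho^\pm\int\bigl((U^\pm-c_R)^2 - c_I^2\bigr)Q^\pm = R$. Combining these with the pointwise bound $(U^\pm-a)(U^\pm-b)\leq 0$ integrated against $\rho^\pm Q^\pm$ and dividing by the positive quantity $\sum_\pm\rho^\pm\int Q^\pm$ yields
\[
\Bigl(c_R-\tfrac{a+b}{2}\Bigr)^2 + c_I^2 \;\leq\; \Bigl(\tfrac{b-a}{2}\Bigr)^2 \;-\; \frac{R}{\sum_\pm\rho^\pm\int Q^\pm\,dx_2} \;\leq\; \Bigl(\tfrac{b-a}{2}\Bigr)^2,
\]
which is exactly \eqref{E:semicircle}. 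The main obstacle I anticipate is the algebraic reduction of the interface boundary term: one must combine the kinematic relation, the momentum equation (to eliminate $p^\pm(0)$), and the dynamic condition in precisely the right way so that the curvature and gravity contributions emerge with the sign $-g(\rho^+-\rho^-)+\sigma k^2$, with the density weights $\rho^\pm$ placed on both the Rayleigh equation and the integral so that the two-layer boundary terms telescope cleanly against the jump condition \eqref{E:LEuler-4}.
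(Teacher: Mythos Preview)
Your proposal is correct and follows essentially the same route as the paper's proof: the paper's substitution $v_2^\pm=(c-U^\pm)\psi^\pm$ is your $F^\pm$ up to a sign, the paper normalizes $ik\eta=1$ so your boundary term $R=k^2|\eta|^2\bigl(-g(\rho^+-\rho^-)+\sigma k^2\bigr)$ becomes simply $-g(\rho^+-\rho^-)+\sigma k^2$, and the remaining real/imaginary-part manipulation and the use of $(U^\pm-a)(U^\pm-b)\le 0$ are identical. Your evaluation of the interface term via the dynamic condition \eqref{E:LEuler-4} is exactly the paper's equation \eqref{E:free boundary psi}, so the ``main obstacle'' you anticipated is handled by the same algebra.
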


In seeking solutions in the form of \eqref{E:E-function form}, we treat the wave number $k\in \mathbb{R}$ as a parameter. The next theorem shows that when $|k|$ is large, there are exactly two eigenvalues  in the whole complex plane with two branches of wave speed $c^\pm(k)\in \mathbb{R}$. For a pair of monotone shear flows without inflection points, we discuss two cases, depending on whether the intersection of the range of $U^-$ and the one of $U^+$ is empty. For each case, we prove that under a certain condition, there is no singular mode and both $c^\pm(k)$ can be extended to be even and analytic functions for all $k\in \mathbb{R}$. Basically, singular modes correspond to embedding eigenvalues with wave speed $c\in U^-\big([-h_-, 0]\big)\cup U^+\big([0, h_+]\big)$. One can see the precise definition in Definition \ref{D:mode}.

\begin{theorem}\label{T:two branches}
Suppose $U^\pm\in C^3$, $(U^\pm)'\neq 0$, $\eta\neq 0$, and $0< \rho^+< \rho^-$. Let $a, b$ be defined in \eqref{E:ab}. Then the following hold.
\begin{enumerate}
    \item There exists $k_0>0$ such that for any $|k|>k_0$, there are no singular modes and exist exactly two non-singular modes with $c^\pm(k)\in \mathbb{C}$. Moreover, $c^+(k)>b$ and $c^-(k)<a$.
    \item $\lim_{|k|\rightarrow \infty} c^\pm(k)/ \sqrt{\frac{\sigma |k|}{\rho^-+\rho^+}}=\pm 1$.
    \item Assume that $(U^\pm)''\neq 0$ and 
    \begin{align*}
        \min_{c\in \{U^-(0), U^-(-h_-)\}}\int_{-h_-}^0 \frac{1}{\big(U^-(x_2)-c\big)^2}dx_2, \; \min_{c\in \{U^+(0), U^+(h_+)\}}\frac{\rho^-}{\rho^+}\int_0^{h_+} \frac{1}{\big(U^+(x_2)-c\big)^2}dx_2>\frac{1}{g}. 
    \end{align*}
    If one the following holds, 
\begin{enumerate}
    \item  $U^-\big((-h_-, 0)\big)\cap U^+\big((0, h_+)\big)=\emptyset$ and \begin{align*}  \max_{c\in \{U^\pm(0), U^\pm(\pm h_\pm)\}} \Big\{\rho^+\int_0^{h_+} \big(U^+(x_2)-c\big)^2dx_2+\rho^-\int_{-h_-}^0\big(U^-(x_2)-c \big)^2dx_2\Big\}<\sigma,
    \end{align*}
    \item For all $c\in U^-\big((-h_-, 0)\big)\cap U^+\big((0, h_+)\big)\neq \emptyset$, $(U^+)''(U^-)''>0$, and \begin{align*}  \max_{c\in \{a, b\}}\Big\{\rho^+\int_0^{h_+} \big(U^+(x_2)-c\big)^2dx_2+\rho^-\int_{-h_-}^0\big(U^-(x_2)-c \big)^2dx_2\Big\}<\sigma,
    \end{align*}
\end{enumerate}
then the two branches $c^\pm(k)$ can be extended to be even and analytic functions for all $k\in \mathbb{R}$. $c^+(k)>b$ and $c^-(k)<a$ for all $k\in \mathbb{R}$. Moreover,  $c^\pm(k)$ correspond to the only singular and non-singular modes of the linearized Euler system \eqref{E:LEuler}. 
\end{enumerate}
\end{theorem}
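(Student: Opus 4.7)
The first step is to reduce the eigenvalue problem to a single scalar dispersion relation. Substituting the normal-mode ansatz \eqref{E:E-function form} into \eqref{E:LEuler} yields two Rayleigh-type ODEs
\[
(U^\pm-c)(\phi^\pm{}''-k^2\phi^\pm)-(U^\pm)''\phi^\pm=0
\]
for $\phi^\pm:=v_2^\pm$ on the two slabs, together with $\phi^\pm(\pm h_\pm)=0$, the kinematic relation $\phi^\pm(0)=ik(U^\pm(0)-c)\eta$, and the pressure jump \eqref{E:LEuler-4}. When $c$ lies outside $U^-([-h_-,0])\cup U^+([0,h_+])$, these ODEs are regular; let $\phi^\pm_c(\cdot;k)$ denote the unique solution vanishing at $\pm h_\pm$ and normalized so that $\phi^\pm_c(0;k)=1$. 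Eliminating $\eta$ from the interface conditions produces a single scalar equation $F(c,k)=0$, where schematically
\[
F(c,k)=\rho^+\bigl(U^+(0)-c\bigr)^2\frac{\phi^+_c{}'(0)}{\phi^+_c(0)}-\rho^-\bigl(U^-(0)-c\bigr)^2\frac{\phi^-_c{}'(0)}{\phi^-_c(0)}+g(\rho^--\rho^+)+\sigma k^2.
\]
The function $F$ is jointly analytic in $(c,k)$ on its domain and even in $k$.

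For parts (1) and (2), I analyze $F$ in the regime $|k|\gg 1$. A standard Volterra/WKB perturbation of the Rayleigh ODEs around $\phi''-k^2\phi=0$ gives
\[
\frac{\phi^+_c{}'(0)}{\phi^+_c(0)}=-|k|+O(1),\qquad \frac{\phi^-_c{}'(0)}{\phi^-_c(0)}=|k|+O(1),
\]
uniformly for $c$ in bounded sets. Hence to leading order
\[
F(c,k)=-|k|\bigl[\rho^+(U^+(0)-c)^2+\rho^-(U^-(0)-c)^2\bigr]+\sigma k^2+\text{l.o.t.},
\]
and after the rescaling $c=\sqrt{\sigma|k|/(\rho^++\rho^-)}\,\tilde c$, the equation $F=0$ becomes a small perturbation of $\tilde c^{\,2}=1$. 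The implicit function theorem then produces exactly two real roots $c^\pm(k)$ satisfying the asymptotic in (2), with $c^+(k)>b$ and $c^-(k)<a$. Uniqueness of these two roots in the whole complex plane is obtained by combining Theorem \ref{T:semicircle} (which confines any unstable mode to a bounded disk of radius $(b-a)/2$) with a Rouch\'e count against the approximating polynomial $\sigma k^2-|k|[\rho^+(U^+(0)-c)^2+\rho^-(U^-(0)-c)^2]$, whose only zeros are the two branches just found.

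For part (3), the strategy is to prevent $c^\pm(k)$ from entering $[a,b]$ as $|k|$ decreases from $\infty$. A crossing can only take place at one of the four values $c_\star\in\{U^\pm(0),U^\pm(\pm h_\pm)\}$, since for $c_\star$ strictly interior to the range of $U^\pm$ the Rayleigh equation is genuinely singular and $F$ is not a priori defined there. Integrating the Rayleigh equation against explicit test functions yields representations of
\[
\lim_{c\to c_\star}\frac{\phi^\pm_c{}'(0)}{\phi^\pm_c(0)}\quad\text{in terms of}\quad \int (U^\pm-c_\star)^{-2}\,dx_2\quad\text{and}\quad \int (U^\pm-c_\star)^2\,dx_2.
\]
The hypotheses $\int(U^\pm-c_\star)^{-2}dx_2>1/g$ and $\rho^+\!\int(U^+-c)^2+\rho^-\!\int(U^--c)^2<\sigma$ are calibrated precisely so that $F(c_\star,k)$ has a definite sign (positive at $c=b$, negative at $c=a$) for every $k\in\mathbb{R}$, excluding any crossing. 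In case (a) the disjointness of the two ranges allows each $c_\star$ to be tested independently; in case (b) the condition $(U^+)''(U^-)''>0$ on the overlap makes the contributions of the two critical-layer singularities add with a common sign, so only $c_\star=a,b$ need be checked. Once crossings are excluded, the implicit function theorem extends $c^\pm(k)$ analytically for all $k\in\mathbb{R}$, and the symmetry $k\leftrightarrow -k$ in the ODE gives evenness. Finally, a Howard-type $L^2$ energy identity for any putative singular mode with $c\in U^\pm$, combined with $(U^\pm)''\neq 0$ and the same integral hypotheses, rules out any further eigenvalue, so that $c^\pm(k)$ exhaust both singular and non-singular modes.

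The hardest step is the sign analysis of part (3) in case (b), where $U^+$ and $U^-$ share values. Any such $c$ forces a simultaneous continuation of $\phi^\pm_c$ across their respective critical layers, in which a Frobenius expansion introduces logarithmic terms with coefficients proportional to $(U^\pm)''$. Establishing that the surface-tension hypothesis on $\sigma$ together with $(U^+)''(U^-)''>0$ produces a uniform-in-$k$ sign for $F$ on the real interval $[a,b]$ requires simultaneously controlling both critical-layer contributions and ruling out embedded singular modes, and it is here that the precise form of the integral hypotheses is used most delicately.
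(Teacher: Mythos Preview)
Your outline for parts (1)--(2) is close in spirit to the paper's (the paper uses a contraction-mapping argument on the fixed-point form of $F=0$ rather than Rouch\'e, but the large-$k$ asymptotics and the Semicircle Theorem play the same role). The real issues are in part (3).

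\textbf{The sign claim is incorrect and the mechanism is missing.} You assert that the hypotheses force $F(c_\star,k)$ to be ``positive at $c=b$, negative at $c=a$'' for every $k$. In fact the paper shows $F(c_\star,k,\epsilon)<0$ for \emph{every} endpoint $c_\star\in\mathcal{E}$ and every $k$; since by construction $F(c^\pm(k),k,\epsilon)=0$ with $\partial_c F>0$ at $c^+$ and $\partial_c F<0$ at $c^-$, a uniformly negative value at all of $\mathcal{E}$ is what blocks both branches from touching $[a,b]$. More importantly, you do not explain how the two integral hypotheses produce this sign for all $k$. The paper's argument has three distinct ingredients that you have collapsed into one sentence: (i) the hypothesis $\int(U^\pm-c_\star)^{-2}\,dx_2>1/g$ gives $F(c_\star,0,\epsilon)<0$ via the explicit $k=0$ formulas for $Y^\pm$; (ii) the hypothesis $m(c_\star)<\sigma/\rho^-$ gives $\partial_K F(c_\star,0,\epsilon)<0$ via the identity $\partial_K Y^\pm=\mp\,y^\pm(0)^{-2}\int (y^\pm)^2\,dx_2$; and (iii) the strict concavity $\partial_{KK}F<0$ in $K=k^2$ then forces $F(c_\star,k,\epsilon)<0$ for all $k$. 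Step (iii), which you never mention, is the crucial bridge from the $k=0$ computations to the full statement.

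\textbf{The treatment of interior $c$ and case (b) is off.} You write that a crossing can only occur at the four endpoints because ``for $c_\star$ strictly interior to the range of $U^\pm$ the Rayleigh equation is genuinely singular and $F$ is not a priori defined there.'' This is not the reason. The paper does define $F$ on the interior of the ranges as the boundary value $\lim_{\beta\to0^+}F(c+i\beta,k,\epsilon)$, and the exclusion of interior singular modes comes from the explicit formula $F_I=-\epsilon Y_I^+(U^+(0)-c)^2+Y_I^-(U^-(0)-c)^2$ together with $Y_I^\pm\propto \mp(U^\pm)''(x_c^\pm)$. When $\mathcal{I}=\emptyset$ only one of $Y_I^\pm$ is nonzero and $(U^\pm)''\neq0$ gives $F_I\neq0$; when $\mathcal{I}\neq\emptyset$ the hypothesis $(U^+)''(U^-)''>0$ makes the two terms in $F_I$ have the same sign, so again $F_I\neq0$ on $\mathcal{I}$. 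This is a pointwise nonvanishing of the imaginary part, not a Frobenius/log analysis, and it does not involve $\sigma$ at all---the $\sigma$ hypothesis is used only at the endpoints $a,b$. Finally, the exhaustiveness of $c^\pm(k)$ is obtained not by a Howard-type energy identity but by a winding-number continuation: $n(k)=\oint_{\partial\Omega}\partial_c F/F\,dc$ equals $2$ at $k=k_0$ and is continuous in $k$ once one knows $F\neq0$ on the excised neighborhoods of $U^\pm([\,\cdot\,])$.
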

\begin{remark}
When $\rho^+=0$ and $(U^-)'\neq 0$, then results in Theorem \ref{T:two branches}(1) and (2) coincide the ones of capillary gravity water wave linearized at monotone shear flows which is obtained in \cite{LZ}.
\end{remark}

We notice that $c=a$ is on the boundary of the domain where the Rayleigh equation keeps regularity. Based on Theorem \ref{T:semicircle},  it might correspond to an isolated singular mode or a neutral limiting mode which is defined as the limit of a sequence of unstable modes (see Definition \ref{D:mode}). In the following theorem, we consider the case where the lighter fluid stays on top of the heavier fluid. For the case of monotone shear flows which have no inflection value, we study the behavior of branch $c^-(k)$ which is obtained in Theorem \ref{T:two branches}(1) as $|k|$ tends to $0$ from infinity.  We show that there exists $g_*\geq 0$ such that the value of $g$ relative to $g_*$ tells us when the instability happens near $c=a$, i.e. the minimal value of $U^+\big([0, h_+]\big)\cup U^-\big([-h_-, 0]\big)$. More precisely, if $g\geq g_*$, there is no instability when $c$ is near $a$. If $g<g_*$, under an additional assumption that $(U^\pm)''>0$, instability arises from $c=a$ under some conditions.  In this case, if $(U^\pm)''<0$, there is no instability near $c=a$. In the capillary gravity water wave problem linearized at monotone shear flows, according to Theorem 1.1 in \cite{LZ}, there exists $g_\#$ which provides a sharp condition for linear instability arising from the value of velocity profile at the bottom. More precisely, when $g\in (0, g_\#)$, an increasing convex shear flow  is spectrally unstable and an increasing concave shear flow is spectrally stable. When $g\geq g_\#$, monotone shear flow is always spectrally stable if the basic velocity profile has no inflection value. Our result for $\rho^+=0$ coincides this result and in this case $g_*=g_\#$. 

\begin{theorem}\label{T:instability fixed e}
Assume that $U^\pm\in C^6$, $(U^\pm)'\neq 0, (U^\pm)''\neq 0$, $\eta\neq 0$, and $0< \rho^+< \rho^-$. Let $c^-(k)$ be obtained in Theorem \ref{T:two branches}(1) and $a=\min U^-([-h_-, 0])\cup U^+([0, h_+])$.  Then, there exists $g_*> 0$
such that the following hold. \begin{enumerate}
    \item If $g>g_*$, then $c^-(k)$ can be extended as an even and analytic function such that for all $k\in \mathbb{R}$, $c^-(k)<a$. There is no singular mode at $c=a$ for all $k\in \mathbb{R}$.
    \item If $g=g_*$, there exists a unique $k_*>0$ such that $c^-(k)$ can be extended to be an even and $C^{1, \alpha}$ function for all $k\in \mathbb{R}$ (for any $\alpha\in [0, 1)$) such that it is analytic everywhere except $k=\pm k_*$ where  $c^-(\pm k_*)=a$ for all $k\in \mathbb{R}$.
    \item  If one of the following assumption is further satisfied,
    \begin{enumerate}
        \item $a=U^+(0)\notin U^-([-h_-, 0])$ and 
        \begin{align*}
            \frac{\rho^-}{g(\rho^--\rho^+)}<\int_{-h_-}^0 \frac{1}{\big( U^-(x_2)-U^+(0)\big)^2}dx_2;
        \end{align*}
        \item $a=U^-(-h_-)$ or $U^-(0)$;
        \item $a=U^+(h_+)\notin U^-([-h_-, 0])$ and 
        \begin{align*}
            \frac{\rho^-}{g(\rho^--\rho^+)}<\int_{-h_-}^0 \frac{1}{\big( U^-(x_2)-U^+(h_+)\big)^2}dx_2,
        \end{align*}
    \end{enumerate}
    then for $g\in (0, g_*)$, there exist $k_*^+>k_*^->0$ and $\delta>0$ such that we have the following. 
    \begin{enumerate}
        \item [(i)] Assume that $(U^\pm)''>0$. Then $c^-(k)$ can be extended to be an even and $C^{1, \alpha}$ function for all $|k|\geq k_*^+-\delta$ and analytic except at $k=\pm k_*^+$ and it satisfies that
        \begin{equation}\label{E:a-bifurcation-c-}
            c^-(\pm k_*^+)=a,\quad  c^-(k)<a, \; \forall |k|>k_*^+,
        \end{equation}
        \begin{equation}\label{E:cI-a}
             c_I^-(k)>0,\; c_R^-(k)>a, \;\forall |k|\in [k_*^+-\delta, k_*^+).
        \end{equation}
        And there exists a $C^{1, \alpha}$ function $\mathcal{C}(k)$ on $[0, k_*^-+\delta]$ and analytic except at $k=\pm k_*^-$ such that 
        \begin{equation}\label{E:a-bifurcation-tilde-c}
            \mathcal{C}(\pm k_*^-)=a, \quad \mathcal{C}(k)<a, \; \forall |k|\in [0, k_*^-), 
        \end{equation}
        \begin{equation}\label{E:tilde-C-I}
         \mathcal{C}_I(k)>0, \; \mathcal{C}_R(k)>a, \; \forall |k|\in (k_*^-, k_*^-+\delta].
        \end{equation}
        Moreover, there exists $\gamma>0$ such that for $c_R<a+\gamma$, all singular and non-singular modes with $|k|\geq k_*^+-\delta$ are $c^-(k)$ and the ones with $|k|\in [0, k_*^-+\delta]$ are $\mathcal{C}(k)$. 
    \item[(ii)] Assume that $(U^\pm)''<0$. Then $c^-(k)$ can be extended to be an even and $C^{1, \alpha}$ function for all $|k|\geq k_*^+$, analytic except at $k=\pm k_*^+$, and \eqref{E:a-bifurcation-c-} holds.  And there exists a $C^{1, \alpha}$ function $\Tilde{C}(k)$ on $[0, k_*^-]$ and analytic except at $k=\pm k_*^-$ such that \eqref{E:a-bifurcation-tilde-c} holds. In addition, for $c\leq a$, all singular and non-singular modes with $|k|\geq k_*^+$ are $c^-(k)$ and the ones with $|k|\in [0, k_*^-]$ are $\Tilde{C}(k)$. 
    \end{enumerate}
\end{enumerate}
\end{theorem}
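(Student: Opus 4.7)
The plan is to control the branch $c^-(k)$ from Theorem \ref{T:two branches}(1) by tracking the critical condition $c=a$ in the interface dispersion relation. Let $y^\pm(x_2;k,c)$ be the normalized solutions of the corresponding Rayleigh equations on $[0,h_+]$ and $[-h_-,0]$ satisfying the rigid-boundary conditions at $x_2=\pm h_\pm$, and let $D(k,c,g)$ denote the scalar dispersion function obtained by imposing the interface jump conditions \eqref{E:LEuler-3}--\eqref{E:LEuler-4}. Since $g$ enters only through \eqref{E:LEuler-4}, $D$ is affine in $g$, so the relation $D(k,a,g)=0$ can be solved as $g=\mathcal{G}(k)$. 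I would first establish that $\mathcal{G}$ is even, continuous, and tends to $0$ as $|k|\to\infty$ (because the capillary term $\sigma k^2$ dominates the dispersion at $c=a$), while the integral hypotheses on $g$ in Theorem \ref{T:two branches}(3) plus the alternatives (a)--(c) force $\mathcal{G}$ to be strictly positive somewhere. Hence $g_*:=\sup_{k\in\mathbb{R}}\mathcal{G}(k)>0$ is attained at some $k_*>0$.

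For part (1), $g>g_*$ means $\mathcal{G}(k)=g$ has no real solution, so $c^-(k)$ constructed for $|k|>k_0$ never reaches $a$. Combining this with the semicircle Theorem \ref{T:semicircle} (which confines any unstable wave speed strictly inside $[a,b]$) and analytic dependence of $y^\pm$ on $(k,c)$ for $c<a$, a standard continuation in $k$ extends $c^-(k)$ as an even real analytic function with $c^-(k)<a$ on all of $\mathbb{R}$. For part (2), at $g=g_*$, $\mathcal{G}$ attains its maximum at $k_*$, giving $c^-(\pm k_*)=a$. The key local computation is the expansion of $D$ near $(k_*,a)$: because $(U^\pm)'\neq 0$ on the closed intervals, $a$ is attained on the boundary $\{0,\pm h_\pm\}$ only, so the Frobenius analysis of the Rayleigh equation at the boundary critical layer produces a $(c-a)\log(c-a)$ contribution in $y^\pm(0;k,c)$. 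A nondegenerate maximum of $\mathcal{G}$ at $k_*$ combined with this expansion yields $D(k,c,g_*)\sim \alpha (k-k_*)^2+\beta(c-a)\log(c-a)+\cdots$, from which Puiseux inversion gives $c^-(k)-a=O\bigl((k-k_*)^2/\log|k-k_*|\bigr)$, hence $C^{1,\alpha}$ regularity for every $\alpha\in[0,1)$ and analyticity away from $\pm k_*$.

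For part (3), when $g<g_*$, continuity of $\mathcal{G}$ and the shape established above (monotone past its unique maximum in a neighborhood of its maximizer, and connected to $0$ at $\pm\infty$ and at $0$) yield exactly two roots $0<k_*^-<k_*^+$ of $\mathcal{G}(k)=g$. Near each of these two points I would apply the same local expansion of $D$, but now about a transverse crossing rather than a tangency. The inversion gives two local branches from $(k_*^\pm,a)$: one on the real axis with $c<a$, which glues onto $c^-(k)$ for $|k|>k_*^+$ and onto $\mathcal{C}(k)$ or $\tilde C(k)$ for $|k|<k_*^-$; and a second branch on which $c$ leaves the real axis. Whether this second branch enters the upper half plane is decided by the sign of the imaginary part of $D(k,c\pm i0,g)$ as $c$ crosses $a$, which in turn is determined through the residue of $U^\pm{}''/(U^\pm-c)$ at the boundary critical layer by the sign of $(U^\pm)''/U^\pm{}'$ at the endpoint realizing $a$. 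Under the hypothesis $(U^\pm)''>0$ one obtains $c_I>0$ on the perturbed branch close to $k_*^+$ and on the separate branch $\mathcal{C}$ issuing from $k_*^-$, giving \eqref{E:cI-a}, \eqref{E:tilde-C-I}; under $(U^\pm)''<0$ the same computation forces both branches to stay real. Finally, uniqueness of modes with $c_R<a+\gamma$ (or $c\le a$ in the concave case) follows by combining the semicircle theorem with a continuation argument from $|k|\to\infty$ and from $k=0$ using Theorem \ref{T:two branches}(1)--(2) and the fact that the local expansion at $c=a$ captures the only nontrivial root of the dispersion function in this region.

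The main obstacle is the rigorous expansion of $y^\pm(0;k,c)$ uniformly in $k$ as $c\to a$ across a boundary critical layer, and extracting the correct signed logarithmic contribution that controls the imaginary part. The two-phase geometry complicates this: the contributions from the upper and lower Rayleigh problems must be combined carefully, and the alternatives (a)--(c) distinguish which side is responsible for the singularity and fix the overall sign. Once this local analysis is in hand, matching the local picture near $(k_*^\pm,a)$ with the global branches from Theorem \ref{T:two branches} and with the semicircle constraint closes the argument.
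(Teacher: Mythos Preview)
Your overall plan---define a threshold $g_*$ by the condition $F(a,k,\epsilon)=0$, then split into the three regimes $g>g_*$, $g=g_*$, $g<g_*$---is exactly the paper's strategy. But two load-bearing steps in your outline are not correct as written.

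First, the structure of $\mathcal G(k)$ is asserted rather than proved. You claim a ``nondegenerate maximum'' at $k_*$ for part~(2) and ``exactly two roots $k_*^\pm$'' for part~(3), but you give no mechanism for this shape. The paper obtains it from a concrete computation: with $K=k^2$, one shows $\partial_{KK}F(a,K,\epsilon)<0$ (via integral identities for $\partial_K Y^\pm$ coming from differentiating the Rayleigh equation), so $K\mapsto F(a,K,\epsilon)$ is strictly concave. Combined with $F(a,0,\epsilon)<0$ (this is precisely what the alternatives (a)--(c) guarantee, not positivity of $\mathcal G$ as you write) and $F(a,K,\epsilon)\to-\infty$ (not $\to0$) as $K\to\infty$, strict concavity forces a unique maximum and, when $g<g_*$, exactly two crossings. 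Without this concavity your uniqueness and two-root claims are unjustified. Also note that $g_*>0$ follows from $\epsilon>0$ alone, not from (a)--(c).

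Second, your local expansion at $c=a$ is off. You write $D\sim\alpha(k-k_*)^2+\beta(c-a)\log(c-a)$ and then invoke a Puiseux inversion. In fact the paper shows (and it is essential) that $\partial_cF(a,k_*,\epsilon)\neq0$: for the convex case this is a direct computation using the explicit $k=0$ solutions of Rayleigh's equation and the monotonicity $\partial_{Kc}F<0$; in general it is inherited from the sign of $\partial_cF$ along the branch $c^-(k)$. Because the $c$-derivative does not vanish, no Puiseux expansion is needed: the Implicit Function Theorem applied to a $C^{1,\alpha}$ extension of $F$ across $c_I=0$ gives the branch $\mathcal C^-(k)$ directly, and its $C^{1,\alpha}$ regularity is inherited from $F$. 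The sign of $\mathcal C_I^-(k)$ on the side where $\mathcal C_R^->a$ is then read off from the explicit formula for $F_I$ at real $c$ just above $a$, namely $Y_I^\pm(c,k)=\mp\pi(U^\pm)''(x_c^\pm)\,y^\pm(x_c^\pm)^2/\bigl(|(U^\pm)'(x_c^\pm)|\,|y^\pm(0)|^2\bigr)$; this is where the hypothesis $(U^\pm)''>0$ versus $(U^\pm)''<0$ enters, and it matches your intuition about the residue sign---but it requires the nonvanishing of $\partial_cF$, not a logarithmic leading term.
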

\begin{remark}1). In Theorem \ref{T:instability fixed e}(3), $c^-(k)$ may not be extended to be a $C^{1, \alpha}$ function for all $k\in \mathbb{R}$ since there might exist an isolated neutral limiting mode (see Definition \ref{D:mode}) with $c\neq a$ (see Lemma \ref{L:nlm-location-fixed-ep}), which may break the continuation of the bifurcation curve.  2). Let $\rho^+=0$. The result in Theorem \ref{T:instability fixed e}(1), (2), and (3) for $a=U^-(-h_-)$ coincides the one of capillary gravity water wave linearized at monotonically increasing and convex shear flows obtained in \cite{LZ}. 
\end{remark}

When $\epsilon:=\frac{\rho^+}{\rho^-}\ll 1$, we consider the two-phase fluid interface problem as a perturbation of capillary gravity water wave linearized at monotone shear flows. 
%From above result, we know that the left branch $c^-(k)$ is real and on the left of $U^\pm$ for large $|k|$. As $k$ getting small from infinity, we study how $c^-(k)$ changes for each small nonzero $\epsilon$. We introduce $g_*$ and show that how big $g$ is relative to $g_*$ tells us when the instability arises from the minimal value of $U^+$ and $U^-$. When $g>g_*$, $c^-(k)$ can not enter the range of $U^\pm$. When $g=g_*$, $c^-(k)$ can touch $a$ which the minimum value of $U^\pm$ but no linear instability occurs. For all $k\in \mathbb{R}$, it still stays on the left. But when $0<g<g_*$, $c^-(k)$ can enter the range of $U^\pm$ and linear instability happens.  In the statement (3) of the following theorem, 
In the following, we discuss a special case where the speed of the monotonically increasing shear flow in lower fluid is strictly slower than the one in upper fluid. We prove that if the one fluid free boundary problem  has a neutral mode with wave speed $c>U^-(0)$ in the range of $U^+$, which means that there exists a critical layer location $x_2\in [0, h_+]$,  then small $\epsilon$ leads to unstable mode near such wave speed in the ocean-air system. This is due to the resonance between shear flows in the air and ocean. Such situation can be viewed as an extension of the critical layer phenomenon studied by Miles \cite{Mil, miles592, miles593} and B$\ddot{u}$hler-Shatah-Walsh-Zeng \cite{Oli}. We know that in the one fluid problem, the right branch of wave speed $c^+_0(k)$ is real and larger than $U^-(0)$ for all $k\in \mathbb{R}$. The eigenvalue distribution of the ocean-air system depend on the location of the neutral mode with zero wave number (i.e. $c_0$ defined in the below)  in the one fluid free boundary problem and the sign of $(U^+)''$.  If $(U^+)''<0$, then the existence of critical layer in the above sense in the range of $U^+$ leads to instability.  We prove that if $(U^+)''>0$ and $c^+(0)\in \big(U^+(0), U^+(h_+)]$, then then for small $\epsilon>0$,  as $|k|$ becomes small and tends to $0$, the right branch $c^+(k)$ of the two-phase interface problem disappears when it reaches $U^+(h_+)$. If $(U^+)''>0$ and 
$c_0^+(0)\in \big(U^-(0), U^+(0)\big)$, then  $c^+(k)$ of  disappears when it reaches $U^+(h_+)$ and reappears at $U^+(0)$. Meanwhile, we have more precise result of the other branch $c^-(k)$ compared with the one in Theorem \ref{T:instability fixed e}, which depends on the relative value of $g$ and $g_\#$(mentioned before Theorem \ref{T:instability fixed e}). Under a certain condition at $c=U^-(-h_-)$, if $g>g_\#$, for small $\epsilon$, $c^-(k)$ can still be extended for all $k\in \mathbb{R}$ and $c^-(k)<U^-(h_-)$. When $g\in (0, g_\#]$ (with an additional assumption when $g=g_\#$), if $(U^-)''>0$, instability occurs when $c^-_R(k)$ enters and then leaves the range of $U^-$ through $U^-(-h_-)$ as $|k|$ tends to $0$ from infinity for $0<\epsilon\ll 1$. If $(U^-)''<0$, $c^-(k)$ disappears when it touches $U^-(-h_-)$ and then reappears for small $|k|$.  

\begin{theorem}\label{T:e-small-g*}Assume that $U^\pm$ satisfies \eqref{E:U monotone}, $(U^\pm)'>0$, $U^-(0)<U^+(0)$,  $(U^\pm)''\neq 0$, and $\eta\neq 0$. 
\begin{enumerate}
    \item Assume that 
\begin{equation}\label{A:u+h+-bifurcation}
    \int_{-h_-}^0\big(U^-(x_2)-U^+(h_+)\big)^2dx_2<\frac{\sigma}{\rho^-}.
\end{equation}
Let $c_0>U^-(0)$ be the unique solution to 
\begin{equation}\label{A:c-solution-g}
    \int_{-h_-}^0 \frac{1}{\big(U^-(x_2)-c\big)^2}dx_2 =\frac{1}{g}.
\end{equation}
Let $c^+(k)$ be obtained in Theorem \ref{T:two branches}. Then the following hold. 
\begin{enumerate}
    \item If $c_0>U^+(h_+)$, there exists $\epsilon_0>0$ such that for any $\frac{\rho^+}{\rho^-}\in (0, \epsilon_0]$,  $c^+(k)$ can be extended to be an even and analytic function for all $k\in \mathbb{R}$  and $c^+(k)>U^+(h_+)$. It is the only singular and non-singular mode for $c_R>U^-(0)$.
    \item If $c_0\in (U^+(0), U^+(h_+)]$, there exists $\epsilon_1>0$ such that for any $\frac{\rho^+}{\rho^-}\in (0, \epsilon_1]$, there exists a unique $k_*> 0$ such that  $c^+(\pm k_*)=U^+(h_+)$ and the following holds.
    \begin{enumerate}
        \item If $(U^+)''<0$, then $c^+(k)$ can be extended to be an even and  $C^{1, \alpha}$ (for all $\alpha\in [0, 1)$) function for all $k\in \mathbb{R}$ and analytic in $k$ except at $\pm k_*$. $c^+(k)$ satisfies that
    \begin{align*}
     c^+(k)>U^+(h_+), \; \forall |k|\in (k_*, \infty), \quad c_I^+(k)>0, c_R^+(k)\in \big(U^+(0), U^+(h_+)\big), \; \forall |k|\in [0, k_*].
    \end{align*}
    \item If $(U^+)''>0$, $c^+(k)$ can be extended to be a real valued $C^{1, \alpha}$ function (for all $\alpha\in [0, 1)$) for $|k|\geq k_*$ and analytic in $k$ if $|k|>k_*$. 
    \end{enumerate}
    Moreover, $c^+(k)$ is the only singular and non-singular mode for $c_R>U^+(0)$. 
    \item Suppose $c_0\in (U^-(0), U^+(0)]$. Then there exists $\epsilon_2>0$ such that for any $\epsilon\in (0, \epsilon_2]$, there exist $k_*^+>k_*^-> 0$ such that $ c^+(k_*^+)=U^+(h_+),   c^+(k_*^-)=U^-(0)$. 
    \begin{enumerate}
        \item If $(U^+)''<0$, then $c^+(k)$ can be extended as an even $C^{1, \alpha}$ function ($\forall\alpha\in[0, 1)$) for all $k\in \mathbb{R}$ and analytic in $k$ except at $k=\pm k_*^\pm$ such that 
    \begin{align*}
        c_I^+(k)>0, \quad  c_R^+(k)\in \big(U^+(0), & U^+(h_+)\big), \; \forall |k|\in (k_*^-, k_*^+),\\  c^+(k)>U^+(h_+), \; \forall |k|\in (k_*^+, \infty),  \quad c^+(k)&\in \big( U^-(0), U^+(0)\big), \; \forall |k|\in [0,  k_*^-). 
    \end{align*}
    \item If $(U^+)''>0$, then $c^+(k)$ can be extended as an even $C^{1, \alpha}$ real valued function ($\forall \alpha\in [0, 1)$) for $|k|\geq k_*^+$ and analytic in $k$ if $k\neq \pm k_*^+$. And $c^+(k)>U^+(h_+)$ for all $|k|>k_*^+$. In addition, there exists an even real function $\mathcal{C}(k)$ which is $C^{1, \alpha}$ in $k$ for all $|k|\leq k_*^-$ and analytic in $k$ except at $k=\pm k_*^-$ such that $\mathcal{C}(\pm k_*^-)=U^+(0)$. And $\mathcal{C}(k)\in \big( U^-(0), U^+(0)\big)$ for all $|k|<k_*^-$. 
    \end{enumerate}
    Moreover, for $c_R>U^-(0)$, all the singular and non-singular modes are $c^+(k)$ and $\mathcal{C}(k)$. 
\end{enumerate}
\item There  exists $g_\#>0$ such that the following hold. 
\begin{enumerate}
    \item If $g>g_\#$, there exists $\epsilon_3>0$ such that for any $\frac{\rho^+}{\rho^-}\in (0, \epsilon_3]$, $c^-(k)$ can be extended to be an even and analytic function for all $k\in \mathbb{R}$ and $c^-(k)<U^-(-h_-)$. Moreover, it is the only singular and non-singular mode for $c_R\leq U^-(0)$. 
    \item If $g\in (0, g_\#)$, 
    there exists $\epsilon_4>0$ such that for any $\epsilon\in (0, \epsilon_4]$, there exist $k_\#^+>k_\#^->0$ such that $c^-(\pm k_\#^\pm)=U^-(-h_-)$. In addition, 
    \begin{enumerate}
        \item If $(U^-)''>0$, then $c^-(k)$ can be extended as an even $C^{1, \alpha}$ function (for any $\alpha\in [0, 1)$) for all $k\in \mathbb{R}$ and analytic in $k$ except at $k=\pm k_\#^\pm$ such that 
        \begin{align*}
            c^-(k)<U^-(-h_-), \; \forall |k|\in (k_\#^+, \infty)\cup [0, k_\#^-), \quad c_I^-(k)>0, \; c_R^-(k)>U^-(-h_-), \; \forall |k|\in (k_\#^-, k_\#^+). 
        \end{align*}
        \item If $(U^-)''<0$, then $c^-(k)$ can be extended as an even $C^{1, \alpha}$ real valued function (for any $\alpha\in [0, 1)$) for $|k|\geq k_\#^+$, analytic in $k$ except at $k=\pm k_\#^+$ and $c^-(k)<U^-(-h_-)$ for all $|k|>k_\#^+$. There exists a $C^{1, \alpha}$ function $\Tilde{C}(k)$ on $[0, k_\#^-]$ and analytic except at $k=\pm k_\#^-$ such that $\Tilde{C}(k)<U^-(-h_-)$ for all $|k|\in [0, k_\#^-)$.
    \end{enumerate}
    Moreover, for $c_R\leq U^-(0)$, all singular and non-singular modes are $c^-(k)$ and $\Tilde{C}(k)$. 
\end{enumerate}
\end{enumerate}
\end{theorem}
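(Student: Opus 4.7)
The plan is to treat the ocean-air system as an analytic perturbation, in the small density ratio $\epsilon = \rho^+/\rho^-$, of the capillary gravity water wave problem at monotone shear flows analyzed in \cite{LZ}. First I would cast the eigenvalue problem for \eqref{E:E-function form} in \eqref{E:LEuler} as a scalar dispersion equation $F(c, k, \epsilon) = 0$ for $c$ outside $U^-([-h_-, 0]) \cup U^+([0, h_+])$. Concretely, $F$ is built by solving each Rayleigh equation in its layer with the wall condition at $x_2 = \pm h_\pm$, then imposing the kinematic condition \eqref{E:LEuler-3} and the dynamic condition \eqref{E:LEuler-4} at the interface. At $\epsilon = 0$ the upper-fluid contribution collapses to the vacuum boundary condition, so $F(c, k, 0) = 0$ is the dispersion relation of the one-fluid problem, whose branches $c_0^\pm(k)$ and the associated threshold values $c_0$ (from \eqref{A:c-solution-g}) and $g_\#$ are already described in \cite{LZ} and, for $\epsilon > 0$, in Theorem \ref{T:instability fixed e}.

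For Part (1)(a), where $c_0 > U^+(h_+)$, monotonicity of $c_0^+(k)$ in $|k|$ together with assumption \eqref{A:u+h+-bifurcation} keeps $c_0^+(k)$ uniformly bounded away from $U^+([0, h_+])$, so the analytic implicit function theorem applied to $F(\cdot, k, \epsilon)$ gives a holomorphic perturbation in $\epsilon$ that is uniform in $k$; the high-$|k|$ control needed for the uniformity comes from the asymptotics in Theorem \ref{T:two branches}(2). For cases (1)(b) and (1)(c), the curve $c_0^+(k)$ touches $U^+(h_+)$ (and possibly $U^+(0)$) at finitely many wave numbers, which perturb to $k_*$ and $k_*^\pm$ for small $\epsilon$. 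The main local tool near such a crossing is a limiting absorption expansion: as $c$ approaches a point in the interior of $U^+([0, h_+])$ from $\mathrm{Im}\, c > 0$, the relevant integrals in $F$ involving $(U^+ - c)^{-1}$ and $(U^+)''$ acquire principal values plus imaginary residues proportional to $(U^+)''$ at the critical layer. Combined with a Puiseux-type expansion, this produces a $C^{1, \alpha}$ continuation whose bifurcation direction is dictated by $\mathrm{sign}\,(U^+)''$: a negative sign pushes the branch into the upper half plane, yielding the unstable segment in the $(U^+)'' < 0$ subcases, while a positive sign pins the branch to the real axis and terminates it at the crossing, as in the $(U^+)'' > 0$ subcases. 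For (1)(c) the same expansion at $k_*^-$ produces the distinct bifurcating branch $\mathcal{C}(k)$ rooted at $U^+(0)$.

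Part (2) is carried out analogously at the lower endpoint $c = U^-(-h_-)$. The threshold $g_\#$ is identified as the unique value of $g$ for which $F(U^-(-h_-), 0, 0) = 0$; this matches the definition in the one-fluid problem of \cite{LZ}. For $g > g_\#$, the branch $c_0^-(k)$ stays strictly below $U^-(-h_-)$ and the globally uniform implicit function theorem argument of (1)(a) produces the analytic extension in (2)(a). For $g < g_\#$, the crossings of $c_0^-(k)$ with $U^-(-h_-)$ perturb to $k_\#^\pm$, and the same limiting absorption expansion, now involving $(U^-)''$ at the critical layer in the lower fluid, gives the two bifurcation patterns in (2)(b). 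Uniqueness of the listed modes for $c_R$ in the relevant ranges follows by combining the Semicircle Theorem \ref{T:semicircle} with an exclusion of additional singular modes using the structure of $F$ near the endpoints.

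The main technical obstacle, as I see it, is the uniform-in-$\epsilon$ joint bifurcation analysis near each critical wave number, where three small parameters, namely the density ratio $\epsilon$, the distance of $c$ from the endpoint of $U^\pm$, and $k - k_*$, must be handled simultaneously. One needs to show that the coefficients in the Puiseux expansion of $F$ vary continuously in $\epsilon$ down to $\epsilon = 0$, so that the qualitative bifurcation picture proven for the one-fluid problem persists intact for all sufficiently small $\epsilon$. Once this joint analysis is in place, stitching the local bifurcation pieces to the global implicit function theorem regions of (1)(a) and (2)(a) produces the piecewise-analytic, globally $C^{1, \alpha}$ branches claimed in the theorem.
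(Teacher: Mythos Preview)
Your overall strategy---treating the problem as a perturbation in $\epsilon$ of the one-fluid dispersion relation from \cite{LZ} and analyzing bifurcations at the endpoints of the range of $U^\pm$, with the sign of the relevant second derivative determining whether the branch enters the unstable half-plane---matches the paper's approach closely. However, a few of your technical descriptions are incorrect or incomplete and would need to be fixed.

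First, your characterization of $g_\#$ as the value of $g$ making $F(U^-(-h_-), 0, 0) = 0$ cannot be right: a direct computation using the explicit formula for $Y^-(U^-(-h_-), 0)$ gives $F(U^-(-h_-), 0, 0) = -g$ identically, so no positive $g$ achieves this. The correct threshold, as in Lemma~\ref{L:g*} and the paper's proof, is $g_\# = \max_{k\in\mathbb{R}}\{F(U^-(-h_-), k, 0) + g\}$, a maximum over all wave numbers; the competition between the surface-tension term and the $k$-dependence of $Y^-$ is what produces a nontrivial threshold. Second, the paper does not use a Puiseux expansion near the crossings. It extends $F$ to a $C^{1,\alpha}$ function in a full neighborhood of the real endpoint (via the limiting-absorption regularity of $Y^\pm$), applies the ordinary Implicit Function Theorem to get a $C^{1,\alpha}$ branch, and then reads off the sign of the imaginary part from a Mean Value Theorem identity expressing $\mathcal{C}_I$ in terms of $F_I(\mathcal{C}_R(k),k,\epsilon)$, which by \eqref{E:YI+} is proportional to $(U^+)''$ at the critical layer. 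No fractional-power behavior enters.

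Third, two structural tools you omit are essential in the paper and are not easily replaced. The strict concavity of $F$ in $K = k^2$ (Lemma~\ref{L:p-KK-F}) is what guarantees uniqueness of the crossing wave number $k_*$ and the correct sign of $\partial_k F$ there; without it, even establishing that there is a single $k_*$ (rather than several) is unclear. And the global uniqueness claims (``the only singular and non-singular mode for $c_R > U^-(0)$'') are obtained not from local IFT but from a winding-number argument, integrating $\partial_c F / F$ around the boundary of a carefully constructed region in $\mathbb{C}$ that excludes neighborhoods of the ranges of $U^\pm$, and tracking its constancy in $k$. Your proposal to ``stitch local bifurcation pieces to global IFT regions'' is the right intuition, but the actual mechanism in the paper is this degree count, and you should plan to use it.
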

\begin{remark}
1). If there exists $\gamma>0$ such that $\gamma<\theta\min \{U^+(h_+)-U^+(0), U^+(0)-U^-(0)\}$ for any $\theta\in (0, 1)$ and $c_0\in [U^-(0)+\gamma, U^+(0)]\cup \big(U^+(0)+\gamma, U^+(h_+)]\cup [U^+(h_+)+\gamma, \infty\big)$ , then $\epsilon_0$ in (1), $\epsilon_1$ in (2), and $\epsilon_2$ in (3) can be the same and independent of location of $c_0$.
\end{remark}

\end{subsection}

\begin{subsection}{Outline of the proof.} 
The proof of Theorem \ref{T:semicircle} is presented in Section \ref{S:preliminary}. To prove Theorem \ref{T:two branches}, we first generalize some existing results we obtained in \cite{LZ} on the fundamental solutions $y^\pm(c, k, x_2)$ to classical Rayleigh equation and crucial quantities $Y^\pm(c, k):=
\frac{(y^\pm)'(0)}{y^\pm(0)}(c, k)$. The free boundary condition on the interface \eqref{E:E-problem-f} which gives the relation between wave speed $c$ and wave number $k$ can be written in terms of $Y^\pm(c, k)$ which are well defined if both $(U^+)''\neq 0$ and $(U^-)''\neq 0$ or $|k|$ is large. This allows us to analyze the function $F(c, k, \epsilon)$ which is defined in \eqref{E:F-definition} based on the properties of $Y^\pm(c, k)$. The zeros of $F(c, k, \epsilon)$ correspond to the eigenvalues of the linearized system \eqref{E:LEuler}. To discuss the eigenvalue distribution, we treat the wave number $k$ as a parameter and start with large $|k|$. We apply the Contraction Mapping Theorem on the region outside the range of $U^\pm$ to prove statement (1) and (2)(See Lemma \ref{L:c-large k}). Finally, after ruling out all the possible singular modes under certain conditions, we  prove that without singular modes, the two branches wave speed $c^\pm(k)$ can be extended for all $k\in \mathbb{R}$ in statement (3). This follows directly from Lemma \ref{L:no singular mode}. 

To prove Theorem \ref{T:instability fixed e}, we study the properties of $Y^\pm(c, k)$ and explicit representation of $y^+(c, 0)$(resp. $y^-(c, 0)$) when $c\notin U^+\big((0, h_+)\big)$(resp. $c\notin U^-\big(-h_-, 0\big)$). Thanks to the monotonicity of $\partial_k Y^\pm$ and concavity of $F(c, k, \epsilon)$ for $k\geq 0$, we reduce the analysis to the case of $k=0$. And then we apply a bifurcation analysis near $c=a$ to study the behavior of $c^-(k)$ locally. Finally, by applying a  complex continuation argument of a holomorphic function, we extend $c^-(k)$ as $|k|$ becomes small.  The proof of Theorem \ref{T:instability fixed e} follows from a series of Lemmas  in Section \ref{S:distribution of eigenvalues}.

To prove Theorem \ref{T:e-small-g*}, we consider the interface problem as a perturbation of the one fluid free boundary problem and study a special case where both speed of wind and the one of ocean are monotonically increasing and have no inflection value, and at the interface, the speed of ocean is strictly slower than the one of air. After overcoming the degeneracy at $c=U^+(0)$ and $c=U^+(h_+)$, we prove the instability arising from these locations under a certain condition by applying the Implicit Function Theorem and Mean Value Theorem. Based on the eigenvalue distribution of the capillary gravity water wave linearized at monotone shear flows(see \cite{LZ}), we apply complex continuation argument of a holomorphic function and give a complete picture of eigenvalue distribution.  The proof of Theorem \ref{T:e-small-g*} is completed in Section \ref{S:Ocean-Air Model}. 
\end{subsection}

\begin{section}{Preliminary}\label{S:preliminary}
To study the linear system \eqref{E:LEuler}, we seek eigenvalues and eigenfunctions in the form of \eqref{E:E-function form} and consider the corresponding eigenvalue problem. Using \eqref{E:E-function form} in \eqref{E:LEuler-1}, \eqref{E:LEuler-2} and \eqref{E:LEuler-4}, we have 
\begin{subequations}
\begin{equation}\label{E:p-computation}
    \frac{1}{\rho^\pm}(k^2-\partial_{x_2}^2)p^\pm (x_2)=2ik(U^\pm)'(x_2)v_2^\pm(x_2),
\end{equation} 
\begin{equation}\label{E:v2-computation}
    ik\big(U^\pm(x_2)-c\big)v_2^\pm(x_2)+\frac{1}{\rho^\pm}(p^\pm)'(x_2)=0,
\end{equation}
\begin{equation}\label{E:boundary-computation}
    p^+(0)-p^-(0)=g\eta(\rho^+-\rho^-)-\sigma \eta k^2.
\end{equation}
\end{subequations}
We compute $\partial_{x_2}\eqref{E:p-computation}-(k^2-\partial_{x_2}^2)\eqref{E:v2-computation}$ to obtain the following classical Rayleigh equations.
\begin{subequations}\label{E:E-problem}
\begin{equation}\label{E:E-problem-1}
-(v_2^\pm)''(x_2)+\big(k^2+\frac{(U^\pm)''(x_2)}{U^\pm(x_2)-c}\big)v_2^\pm(x_2)=0, \qquad \pm x_2\in [0,  h_\pm]. 
\end{equation}
From \eqref{E:LEuler-5} and \eqref{E:LEuler-3}, we obtain that
\begin{equation}\label{E:E-problem-2}
v_2^\pm (\pm h_\pm)=0
\end{equation}  
\begin{equation}\label{E:E-problem-3}
ik\eta\big(U^\pm(0)-c\big)=v_2^\pm(0). 
\end{equation}
We evaluate $\partial_{x_2}\eqref{E:v2-computation}+\eqref{E:p-computation}$ at $x_2=0$. Combining with the last equation, we obtain the boundary condition at the interface. 
\begin{equation}\label{E:E-problem-4}
\begin{split}
ik\eta\big(g(\rho^+-\rho^-)-\sigma k^2\big)= &\rho^-(U^-)'(0)v_2^-(0)-\rho^+ (U^+)'(0)v^+_2(0)+\rho^+ (v_2^+)'(0)\big(U^+(0)-c\big)\\
&-\rho^-(v_2^-)'(0)\big(U^-(0)-c\big). 
\end{split}
\end{equation}
\end{subequations}
\begin{remark}
To seek unstable solutions, we consider $k\neq 0$ in \eqref{E:E-function form} because zeroth mode corresponds to variation towards nearby shear flows. But we may still study the solution of system \eqref{E:E-problem} with $k=0$ since it is helpful to obtain a picture of the eigenvalue distribution for all integer $k$.  
\end{remark}

We are interested in looking for linearized unstable solutions with $\eta\neq 0$ and $k\neq 0$. Without loss of generality, we normalize it by taking
\begin{align*}
ik\eta=1.
\end{align*}
Meanwhile, we let $\epsilon:=\frac{\rho^+}{\rho^-}$ be the density ratio. Then we can rewrite \eqref{E:E-problem-3} and \eqref{E:E-problem-4} in the following.
\begin{subequations}\label{E:E-problem-n}
\begin{equation}\label{E:E-problem-n3}
v_2^\pm(0)=U^\pm(0)-c,
\end{equation}
\begin{equation}\label{E:E-problem-n4}
  \begin{split}
      g(\epsilon-1)-\frac{\sigma}{\rho^-} k^2= &(U^-)'(0)v_2^-(0)-\epsilon (U^+)'(0)v^+_2(0)+ \epsilon (v_2^+)'(0)\big(U^+(0)-c\big)\\
&-(v_2^-)'(0)\big(U^-(0)-c\big).
  \end{split}  
\end{equation}
\end{subequations}
We summarize the above calculation in the following.

\begin{lemma}\label{L:E-value form}
For $k\in \mathbb{Z}\setminus\{0\}$,  $-ikc$ with $c\notin U^-([-h_-, 0])\cup U^+([0, h_+])$ is an eigenvalue of the linearized Euler system \eqref{E:LEuler} at the shear flow $v_*^\pm=(U^\pm(x_2), 0)$ if  there exists a non-trivial solution of \eqref{E:E-problem-1}, \eqref{E:E-problem-2}, and \eqref{E:E-problem-n}.
\end{lemma}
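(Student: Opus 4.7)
The plan is that this lemma merely records the equivalence between the modal eigenvalue problem for \eqref{E:LEuler} under the ansatz \eqref{E:E-function form} and the boundary value problem \eqref{E:E-problem-1}--\eqref{E:E-problem-2} together with the interface relations \eqref{E:E-problem-n}. The forward direction ``eigenfunction $\Longrightarrow$ solution of the BVP'' is precisely the computation already carried out in the paragraphs preceding the statement: substituting \eqref{E:E-function form} into \eqref{E:LEuler-1}, \eqref{E:LEuler-2}, \eqref{E:LEuler-4} gives \eqref{E:p-computation}--\eqref{E:boundary-computation}; forming $\partial_{x_2}\eqref{E:p-computation}-(k^2-\partial_{x_2}^2)\eqref{E:v2-computation}$ eliminates $p^\pm$ and, under the regularity hypothesis $c\notin U^\pm([0,\pm h_\pm])$, yields the Rayleigh equation \eqref{E:E-problem-1} after dividing by $U^\pm-c$; the wall, kinematic and dynamic conditions collapse into \eqref{E:E-problem-2}--\eqref{E:E-problem-4}; and the normalization $ik\eta=1$ then produces \eqref{E:E-problem-n}.

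For the converse implication, which is what the ``if'' in the statement actually needs, the plan is to reconstruct a genuine modal eigenfunction of \eqref{E:LEuler} from a nontrivial BVP solution $(v_2^\pm,\eta)$. Having $v_2^\pm$ and $\eta$ in hand, I set $v_1^\pm(x_2):=\tfrac{i}{k}(v_2^\pm)'(x_2)$ so that the field $(v_1^\pm,v_2^\pm)e^{ik(x_1-ct)}$ is divergence free, and I recover $p^\pm$ by integrating \eqref{E:v2-computation}:
\[
p^\pm(x_2)=-ik\rho^\pm\int_{\pm h_\pm}^{x_2}\bigl(U^\pm(x_2')-c\bigr)v_2^\pm(x_2')\,dx_2'+C^\pm.
\]
The Neumann condition $(p^\pm)'(\pm h_\pm)=0$ (equivalent to \eqref{E:LEuler-p-BC}) is automatic from \eqref{E:v2-computation} and $v_2^\pm(\pm h_\pm)=0$. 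One checks that the pressure Poisson relation \eqref{E:p-computation} follows from the Rayleigh equation by direct differentiation, and that the first-component momentum equation in \eqref{E:d-Euler-theta} is then a consequence of \eqref{E:E-problem-1} together with the divergence-free constraint. The two free constants $C^\pm$ get pinned down by \eqref{E:boundary-computation}; only one linear relation between $C^+-C^-$ is required, so a physically irrelevant additive background pressure remains, as it should for an incompressible flow.

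The only substantive point, and essentially the whole content of the lemma, is to match \eqref{E:E-problem-n4} with \eqref{E:boundary-computation} at $x_2=0$: after inserting the prescribed values $v_2^\pm(0)=U^\pm(0)-c$ from \eqref{E:E-problem-n3} and the computed boundary values of $(p^\pm)'(0)$ from \eqref{E:v2-computation}, the algebraic identity \eqref{E:E-problem-n4} is exactly the statement that the reconstructed $p^+(0)-p^-(0)$ equals $\eta\bigl(g(\rho^+-\rho^-)-\sigma k^2\bigr)$ once we unwind $ik\eta=1$. The hypothesis $c\notin U^-([-h_-,0])\cup U^+([0,h_+])$ is used only to keep the Rayleigh coefficient nonsingular so that every reconstructed profile is smooth on its domain; nontriviality of the eigenfunction is automatic since $\eta\neq 0$. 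No delicate analysis is needed and there is no real obstacle beyond the bookkeeping of the constants of integration for $p^\pm$.
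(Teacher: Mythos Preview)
Your proposal is correct and matches the paper's approach: the lemma is explicitly introduced with ``We summarize the above calculation in the following,'' so the paper treats it as a restatement of the derivation \eqref{E:p-computation}--\eqref{E:E-problem-n4} with no separate proof. Your reconstruction of $v_1^\pm$ and $p^\pm$ for the converse direction is more explicit than anything the paper writes, but it is the natural filling-in and there is nothing to correct.
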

\begin{remark}\label{R:symmetry-ikc}
The solutions to system \eqref{E:E-problem-1}, \eqref{E:E-problem-2}, and \eqref{E:E-problem-n} are even in $k$. In addition, the complex conjugate of solutions are still solutions and $c$ is replaced by $\Bar{c}$. 
\end{remark}

Using the symmetry in Remark \ref{R:symmetry-ikc}, we notice that the existence of a solution to system \eqref{E:E-problem-1}, \eqref{E:E-problem-2}, and \eqref{E:E-problem-n} with $c_I>0$ and $k>0$ implies exponential linear instability of Euler system. In the rest of paper, $k$ is assumed non-negative mostly.  To study instability, we introduce some concepts as following. 

\begin{definition} \label{D:mode}
A pair $(c, k)$ with  real $c$, positive $k$ is said to be a neutral mode if there exists a pair of  non-trivial solutions to \eqref{E:E-problem-1}, \eqref{E:E-problem-2}, and \eqref{E:E-problem-n}. It is called an unstable mode if $c_I>0$. We call it a neutral limiting mode if it is the limit of a sequence of unstable modes $\{(c_n, k_n)\}$. Furthermore, if $c\in U^+\big([0, h_+]\big)\cup U^-(\big[-h_-, 0]\big)$, it is called a singular mode. Otherwise, it is a non-singular mode.
\end{definition}

In the following, we prove Theorem \ref{T:semicircle} which shows that any unstable mode must have wave speed in an upper semicircle under a mild condition. 

\noindent \textit{Proof of Theorem \ref{T:semicircle}}.
Suppose $c_I\neq 0$. Let $\psi^\pm(x_2)$ be defined by $v_2^\pm(x_2)=\big(c-U^\pm(x_2)\big)\psi^\pm(x_2)$. Then $\psi^\pm(x_2)$ satisfy
\begin{equation}\label{E:psi}
\Big(\big(U^\pm(x_2)-c\big)^2(\psi^\pm) '(x_2)\Big)'(x_2)-k^2\big(U^\pm(x_2)-c\big)^2\psi^\pm(x_2)=0,
\end{equation}
along with
\begin{equation}\label{E:bottom psi}
\begin{split}
\psi^\pm(\pm h_\pm)&=0, \qquad \psi^\pm(0)=-1,
\end{split}
\end{equation}
\begin{equation}\label{E:free boundary psi}
\begin{split}
-g(\rho^+-\rho^-)+\sigma k^2=\rho^+\big(U^+&(0)-c\big)^2(\psi^+)'(0)-\rho^-\big(U^-(0)-c\big)^2(\psi^-)'(0).
\end{split}
\end{equation}
We multiply \eqref{E:psi} by $\overline{\psi^\pm}$ and integrate from $0$ to $\pm h_\pm$ respectively and obtain that
\begin{subequations}\label{E:int}
\begin{equation}\label{E:int+}
-\big(U^-(0)-c\big)^2(\psi^-)'(0)=\int_{-h_-}^0 (U^--c)^2(k^2|\psi^-|^2+|(\psi^-)'|^2)dx_2,
\end{equation} 
\begin{equation}\label{E:int-}
\big(U^+(0)-c\big)^2(\psi^+)'(0)=\int_0^{h_+}(U^+-c)^2 (k^2|\psi^+|^2+|(\psi^+)'|^2)dx_2. 
\end{equation}
\end{subequations}
Let $Q^\pm:= k^2|\psi^\pm|^2+|(\psi^\pm)'|^2$. Then $Q^\pm >0$. We consider imaginary part of \eqref{E:int+} and \eqref{E:int-}.
\begin{subequations}\label{E:int-Im}
\begin{equation}\label{E:int-Im-}
Im\Big(-\big(U^-(0)-c\big)^2(\psi^-)'(0)\Big)=-2c_I\int_{-h_-}^0 (U^--c_R)Q^-dx_2, 
\end{equation} 
\begin{equation}\label{E:int-Im+}
Im \Big(\big(U^+(0)-c\big)^2(\psi^+)'(0)\Big)=-2c_I\int_0^{h_+} (U^+-c_R)Q^+dx_2. 
\end{equation}
\end{subequations}
We compute $\rho^- \eqref{E:int-Im-}+\rho^+ \eqref{E:int-Im+}$, use equation \eqref{E:free boundary psi}, and obtain that
\begin{equation}\label{E:U<->cR}
\int_0^{h_+}\rho^+ U^+ Q^+dx_2+\int_{-h_-}^0 \rho^- U^- Q^-dx_2=c_R(\int_0^{h_+}\rho^+ Q^+dx_2+\int_{-h_-}^0 \rho^- Q^-dx_2),
\end{equation}
Similarly, we consider the real part of  \eqref{E:int+} and \eqref{E:int-}.
\begin{subequations}\label{E:int-Re}
\begin{equation}\label{E:int-Re-}
Re\Big(-\big(U^-(0)-c\big)^2(\psi^-)'(0)\Big)=\int_{-h_-}^0 \big((U^--c_R)^2-c_I^2\big)Q^-dx_2, 
\end{equation} 
\begin{equation}\label{E:int-Re+}
Re \Big(\big(U^+(0)-c\big)^2(\psi^+)'(0)\Big)=\int_0^{h_+} \big((U^+-c_R)^2-c_I^2\big)Q^+dx_2. 
\end{equation}
\end{subequations}
We compute $\rho^+ \eqref{E:int-Re+}+\rho^- \eqref{E:int-Re-}$ and use (\ref{E:free boundary psi}) to obtain that
\begin{align*}\label{E:U<->c}
\int_0^{h_+}U^{+2}\rho^+Q^+dx_2+\int_{-h_-}^0 U^{-2}\rho^-Q^-dx_2=&(c_R^2+c_I^2) (\int_0^{h_+}\rho^+ Q^+dx_2+\int_{-h_-}^0 \rho^- Q^-dx_2)\\
&-g(\rho^+-\rho^-)+\sigma k^2, \tag{2.11}
\end{align*}
where we used equation \eqref{E:U<->cR} and real part of (\ref{E:free boundary psi}). Now we consider the inequality
\begin{align*}
\int_0^{h_+}\rho^+ Q^+(U^+-a)(U^+-b)dx_2+\int_{-h_-}^0 \rho^- Q^-(U^--a)(U^--b)dx_2\leq 0,
\end{align*}
where $a$ and $b$ are defined in (\ref{E:ab}). Substituting \eqref{E:U<->cR} and \eqref{E:U<->c}, we have
\begin{align*}
\big(c_R^2+c_I^2-(a+b)c_R+ab\big) (\int_0^{h_+}\rho^+ Q^+dx_2+\int_{-h_-}^0 \rho^- Q^-dx_2)\leq -\big(g(\rho^--\rho^+)+\sigma k^2\big)\leq 0.
\end{align*}
Since $Q^\pm\geq 0$ and  can not be both identically zero, $(c_R-\frac{a+b}{2})^2+c_I^2\leq (\frac{a-b}{2})^2$.

\hfill $\square$

To study the instability, we will address the possible locations of the limit of a sequence of unstable solutions to \eqref{E:E-problem-1}, \eqref{E:E-problem-2}, and \eqref{E:E-problem-n}, i.e. neutral limiting mode.  In fact, not every neutral mode serves as a neutral limiting mode. First, we  consider and study the fundamental solutions of classical Rayleigh equations. Some results about the fundamental solutions were obtained in \cite{LZ}. Let $y^+(c, k, x_2)$ be the solution to  \eqref{E:E-problem-1} on $[0, h_+]$ under the following boundary condition.
\begin{subequations}\label{E:fundamental solution}
    \begin{equation}\label{E:y+}
y^+(h_+)=0, \quad (y^+)'(h_+)=1.
\end{equation}
Suppose that $y^-(c, k, x_2)$ be the solution to \eqref{E:E-problem-1} on $[-h_-, 0]$ satisfying
\begin{equation}\label{E:y-}
    y^-(-h_-)=0, \quad (y^-)'(-h_-)=1. 
\end{equation}
\end{subequations}
When $c\in U^+\big([0, h^+]\big)$ (or $U^-\big([-h_-, 0]\big)$), $y^+(c, k, x_2)$ (resp. $y^-(c, k, x_2)$) is defined as $\lim_{\theta\rightarrow 0^+}y^+(c+i\theta, k, x_2)$ (resp. $\lim_{\theta\rightarrow 0^+} y^-(c+i\theta, k, x_2)$). It satisfies the following condition for $ U^+(x_2)\neq  c$ (resp. $U^-(x_2)\neq c$).
\begin{align*}
    \lim_{\theta\rightarrow 0+}\big( (y_R^+)'(c, k, x_c+\theta)-(y_R^+)'(c, k, x_c-\theta)\big) =0, \\
    \lim_{\theta\rightarrow 0+}(y_I^+)'(c, k, x_c-\theta)=-\pi \frac{(U^+)''(x_c)}{|(U^+)'(x_c)|}y^+(x_c), 
\end{align*}
where $ x_c=(U^+)^{-1}(c)$ is the preimage of $c\in U^+([0, h_+])$  (resp. $\lim_{\theta\rightarrow 0+}\big((y_R^-)'(c, k, x_c+\theta)-(y_R^-)'(c, k, x_c-\theta)\big)=0,\;
    \lim_{\theta\rightarrow 0+}(y_I^-)'(c, k, x_c+\theta)=\pi \frac{(U^-)''(x_c)}{|(U^-)'(x_c)|}y^-(x_c)$, $x_c=(U^-)^{-1}(c)$). 
Moreover, because of the singularity of \eqref{E:E-problem-1}, $(y_R^\pm)'(x_2)$ has a logarithmic singularity $log|x_2-x_c|$ near $x_c$.

We also define the following crucial quantities which are related to Reynold stress. We let
\begin{subequations}\label{E:Y-definition}
\begin{equation}\label{E:Y-definition-}
\begin{split}
    Y^-(c, k)=Y_R^-(c, k)+iY_I^-(c, k):=\frac{(y^-)'(0)}{y^-(0)}(c, k), \quad c\in \mathbb{C}\setminus U^-\big([-h_-, 0]\big), \\
    Y^-(c, k)=\lim_{\beta\rightarrow 0+}Y^-(c+i\beta, k) \qquad c\in U^-\big([-h_-, 0)\big),
\end{split}
\end{equation}
\begin{equation}\label{E:Y-definition+}
\begin{split}
    Y^+(c, k)=Y_R^+(c, k)+iY_I^+(c, k):=\frac{(y^+)'(0)}{y^+(0)}(c, k), \quad c\in \mathbb{C}\setminus U^+\big([0, h_+]\big), \\
    Y^+(c, k)=\lim_{\beta\rightarrow 0+}Y^+(c+i\beta, k) \qquad c\in U^+\big((0, h_+]\big),
\end{split}
\end{equation}
\end{subequations}

We should notice that $Y^-$ (or $Y^+$) is not well defined at $c=U^-(0)$ (resp. $c=U^+(0)$) because of the singularity of \eqref{E:E-problem-1} at $x_2=0$. The domain of $Y^\pm$ is given by 
\begin{equation}\label{E:Y-domain}
    D(Y^\pm)=\{(c, k)\in \mathbb{C}\times \mathbb{R} \mid c\neq U^\pm(0), y^\pm(c, k, 0)\neq 0\}.
\end{equation}
In $D(Y^\pm)$, we can simplify the free boundary condition of our eigenvalue problem \eqref{E:E-problem-n4}. 
\begin{equation}\label{E:E-problem-f}
    g(1-\epsilon)+\frac{\sigma}{\rho^-}k^2=\epsilon\Big((U^+)'(0)\big(U^+(0)-c\big)-Y^+\big(U^+(0)-c)^2\Big)-(U^-)'(0)\big(U^-(0)-c\big)+Y^-\big(U^-(0)-c\big)^2. 
\end{equation}
This allows us to focus on the system consisting of \eqref{E:E-problem-1}, \eqref{E:fundamental solution}, and \eqref{E:E-problem-f}.  We summarize the analysis above in the following lemma. 
\begin{lemma}\label{L:E-value-n}
The linearization of the interface problem \eqref{E:Euler} at the shear flow \eqref{E:Shear flow} has an eigenvalue $-ikc$ if the solution $y^\pm(c, k, x_2)$ of \eqref{E:E-problem-1} satisfy \eqref{E:fundamental solution} and the pair $(c, k)$ satisfies \eqref{E:E-problem-f}.
\end{lemma}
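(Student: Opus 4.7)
The plan is to recognize that Lemma \ref{L:E-value-n} is essentially a repackaging of Lemma \ref{L:E-value form} in terms of the fundamental solutions $y^\pm$ and the Reynolds-stress-type quantities $Y^\pm$, so the argument is one of bookkeeping: one must (i) build a nontrivial pair $(v_2^+, v_2^-)$ from $y^\pm$ meeting \eqref{E:E-problem-2} and \eqref{E:E-problem-n3} automatically, and then (ii) show that \eqref{E:E-problem-n4} collapses to \eqref{E:E-problem-f} under this substitution.

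For step (i), I would set
\begin{equation*}
v_2^\pm(x_2) \;:=\; \alpha^\pm\, y^\pm(c,k,x_2), \qquad \alpha^\pm := \frac{U^\pm(0)-c}{y^\pm(c,k,0)},
\end{equation*}
which is well-defined precisely because $(c,k)\in D(Y^\pm)$, so that $y^\pm(c,k,0)\neq 0$ and $c\neq U^\pm(0)$. Since $y^\pm$ satisfies the Rayleigh equation \eqref{E:E-problem-1} and its scalar multiples do as well, $v_2^\pm$ solves \eqref{E:E-problem-1}. The lid/bed condition \eqref{E:E-problem-2} is inherited from \eqref{E:fundamental solution}, and by construction $v_2^\pm(0) = U^\pm(0)-c$, which is \eqref{E:E-problem-n3}. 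The pair is nontrivial because $y^\pm$ is nontrivial (its Cauchy data $(y^\pm,(y^\pm)')(\pm h_\pm) = (0,1)$ is nonzero) and $U^\pm(0)-c\neq 0$.

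For step (ii), I would compute
\begin{equation*}
(v_2^\pm)'(0) \;=\; \alpha^\pm (y^\pm)'(0) \;=\; \bigl(U^\pm(0)-c\bigr)\, Y^\pm(c,k),
\end{equation*}
and substitute this together with $v_2^\pm(0) = U^\pm(0)-c$ into the right-hand side of \eqref{E:E-problem-n4}. Multiplying through by $-1$ to move $g(\epsilon-1)-\tfrac{\sigma}{\rho^-}k^2$ across, the identity becomes exactly \eqref{E:E-problem-f}. Conversely, if $(c,k)$ satisfies \eqref{E:E-problem-f}, reversing this calculation yields \eqref{E:E-problem-n4}. At this point the hypotheses of Lemma \ref{L:E-value form} are verified (modulo the case $c\in U^\pm([\mp h_\mp,0])$, where the limiting definition of $y^\pm$ and $Y^\pm$ in \eqref{E:Y-definition} makes the substitution go through by the continuity argument already recorded in the paragraph following \eqref{E:fundamental solution}), and we conclude that $-ikc$ is an eigenvalue.

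There is no genuine obstacle: the only subtlety is making sure the algebra is carried out consistently with the sign conventions of \eqref{E:E-problem-n4} vs.\ \eqref{E:E-problem-f} and confirming the division by $y^\pm(c,k,0)$ is legitimate, which is exactly the content of the condition $(c,k)\in D(Y^\pm)$. Hence the lemma follows by direct substitution together with Lemma \ref{L:E-value form}.
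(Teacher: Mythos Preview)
Your proposal is correct and matches the paper's approach: the lemma is stated in the paper as a summary of the preceding discussion, which is exactly the substitution you carry out---rescale the fundamental solutions $y^\pm$ so that $v_2^\pm(0)=U^\pm(0)-c$, inherit \eqref{E:E-problem-2} from \eqref{E:fundamental solution}, and observe that \eqref{E:E-problem-n4} reduces to \eqref{E:E-problem-f} upon inserting $(v_2^\pm)'(0)=(U^\pm(0)-c)Y^\pm$. The paper does not give a separate formal proof beyond this computation, so your write-up is, if anything, slightly more explicit.
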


For convenience,  we introduce notations $x_c^\pm$ as following.
\begin{definition}\label{D:xc}
If $c\in U^-\big([-h_-, 0] \big)$, let $x_c^-$ be $c=U^-(x_c^-)$. If $c\in U^+\big([0, h_+] \big)$, define $x_c^+$ as $c=U^+(x_c^+)$.  
\end{definition}
\begin{remark}
It is possible that $c\in U^-\big([-h_-, 0] \big)\cap U^+\big([0, h_+] \big)\neq \emptyset$. In this case, $c=U^+(x_c^+)=U^-(x_c^-)$, but $x_c^+$ is not equal to $x_c^-$.
\end{remark}

We shall use the following results about $y^-$ and $Y^-$ which were obtained in \cite{LZ} and the analogous properties of $y^+$ and $Y^+$. 
\begin{lemma}\label{L:cite-yY-}(\cite{LZ}, %Lemma 3.13(a), $y_-$$ is locally $C^\alpha$ in both $k$ and $c$ for any $\alpha\in [0, 1)$
Lemma 3.9, Lemma 3.19(1),(3), Lemma 3.20(3), Lemma 3.22, Lemma 3.24)
Assume that $U^-\in C^{l_0}, l_0\geq 6$ and $(U^-)'\neq 0$. Let $x_c^-$ be defined in Definition \ref{D:xc}. Then $Y^-$ is analytic in both $(c, k)\in D(Y^-)\setminus \big(U^-([-h_-, 0]\times \mathbb{R}) \big)$, $C^{l_0-3}$ in $k$, and locally $C^\alpha$ in $(c, k)\in D(Y^-)\cap \{(c, k) \mid c_I\geq 0\}$ for any $\alpha\in [0, 1)$. And the following hold. \begin{enumerate}
    \item For any $\beta\in (0, \frac{1}{2})$, there exists $C>0$ depending only on $\beta, |(U^-)'|_{C^2}$, and $|1/(U^-)'|_{C^0}$ such that, for any $c\in \mathbb{C}$, 
    \begin{equation}\label{E:y--estimate}
         |y^-(x_2)/\sqrt{k^2+1}-sinh\big(\frac{x_2+h_-}{\sqrt{k^2+1}}\big)|\leq C(k^2+1)^{-\frac{\beta}{2}}sinh\big(\frac{x_2+h_-}{\sqrt{k^2+1}}\big).
    \end{equation}
    \item For any $k\in \mathbb{R}$, 
    \begin{equation}\label{E:y->0}
    y^-(c, k, x_2)>0, \; \forall x_2\in (-h, 0], \; c\in \mathbb{R}\setminus  U^-\big((-h_-, 0) \big),\quad  y^-(x_c^-)> 0 \; \text{if}\; c\in U^-\big((-h_-, 0]\big) 
    \end{equation}
    \item There exists $C>0$ depending only on $U^-$ such that for any $k\in \mathbb{R}$, 
    \begin{equation}\label{E:yI->0}
    |y^-_I(c, k, 0)|\geq \frac{C}{k^2+1}|(U^-)''(x_c^-)|sinh \frac{(x_c^-+ h_-)}{\sqrt{k^2+1}}sinh\frac{|x_c^-|}{\sqrt{k^2+1}}, \quad c\in U^-\big([-h_-, 0]\big),
    \end{equation}
    %\item $Y^\pm\big(U^\pm(\pm h_\pm)\big)\in \mathbb{R}$ and $Y^\pm\big(U^\pm(\pm h_\pm), 0\big)=\frac{(U^\pm)'(0)}{U^\pm(0)-U^\pm(\pm h_\pm)}$. 
    \item There exists $C, \rho>0$ depending only on $U^-$ such that 
    \begin{equation}\label{E:Y--log}
        |Y^-(c, k)|\leq C\big(\sqrt{1+k^2}+\big|\log\min\{1, |U^-(0)-c|\}\big| \big), \quad \forall k\in \mathbb{R}, \; |c-U^-(0)|\leq \rho. 
    \end{equation}
    For any $\beta\in (0, \frac{1}{2})$, there exist $k_0>0$ and $C>0$ depending only on $\beta$, $|(U^-)'|_{C^2}$, and $|\frac{1}{(U^-)'}|_{C^0}$ such that, 
    \begin{equation}\label{E:Y-estimate}
        |Y^-(c, k)- kcoth(kh_-)|\leq C\big((k^2+1)^{\frac{1-\beta}{2}}+|\log \min \{1, |U^-(0)-c|\}| \big), \quad \forall |k|\geq k_0, c\neq U^-(0). 
    \end{equation}
    \item $Y_I^-(c, k)=0$ for $c\in \mathbb{R}\setminus U^-\big((-h_-, 0] \big)$. If $y^-(c, k, 0)\neq 0$, 
    \begin{equation}\label{E:YI-}
        Y_I^-(c, k)=\frac{\pi (U^-)''(x_c^-)y^-(c, k, x_c^-)^2}{|(U^-)'(x_c^-)||y^-(c, k, 0)|^2}, \quad c\in U^-\big((-h_-, 0) \big).
    \end{equation}
    \begin{equation}\label{E:Y-cauchy integral-}
    Y^-(c, k)=
        \begin{cases}
         \frac{1}{\pi}\int_{U^-([-h_-, 0])}\frac{Y^-_I(c', k)}{c-c'}dc'+kcothkh_-, \quad c\notin U^-\big([-h_-, 0]\big), \\
        -\mathcal{H}\big(Y^-_I(\cdot, k) \big)(c)+iY^-_I(c, k)+kcothkh_-, \quad c\in U^-\big([-h_-, 0]\big). 
        \end{cases}
    \end{equation}
    Here, $\mathcal{H}$ means Hilbert transform in $c\in \mathbb{R}$. 
\begin{align*}
    \mathcal{H}(f)(c)=\frac{1}{\pi}P.V.\int_{\mathbb{R}}\frac{f'(c')}{c-c'}dc',
\end{align*}
where $P.V.\int$ denotes the principle value of the singular integral. 
\end{enumerate}
\end{lemma}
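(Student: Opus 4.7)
Since every claim in this lemma concerns only the classical Rayleigh equation \eqref{E:E-problem-1} on $[-h_-, 0]$ with the fixed boundary data \eqref{E:y-}, and nothing about the upper fluid enters, the plan is to recover each item from the corresponding single-layer result established in \cite{LZ}. The proof reduces to isolating the purely ODE input behind each statement and checking that the arguments of \cite{LZ} never used the lower fluid being the only fluid. Below I sketch the ingredients for each bullet in order.

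For item (1), I would view \eqref{E:E-problem-1} as a perturbation of a constant-coefficient Helmholtz-type equation whose fundamental solution on $[-h_-,0]$ matching \eqref{E:y-} is (after the indicated normalization) the hyperbolic profile appearing on the right-hand side of \eqref{E:y--estimate}. Writing $y^-$ as this comparison solution plus a correction, and expressing the correction through a Volterra integral equation against the comparison Green's function, the bound reduces to controlling the perturbation term $\frac{(U^-)''}{U^--c}$; for large $|k|$ a Picard contraction yields the uniform $O\!\big((k^2+1)^{-\beta/2}\big)$ remainder, and the loss $\beta<\tfrac12$ absorbs the logarithmic contribution arising as $c$ approaches the range of $U^-$. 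The constants enter only through $|(U^-)'|_{C^2}$ and $|1/(U^-)'|_{C^0}$, which is all the lemma requires.

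Items (2) and (3) are local analyses at the critical layer. For (2), when $c$ is real and outside $U^-([-h_-,0])$ the Rayleigh coefficient is smooth and real, and the shooting/Sturm comparison argument starting from $(0,1)$ at $-h_-$ forces $y^->0$ on $(-h_-,0]$; for $c\in U^-((-h_-,0])$ the limit from the upper half $c$-plane selects the Frobenius-regular branch at $x_c^-$, on which $y^-(x_c^-)$ agrees with the real solution continued through the singular point. For (3), $x_c^-$ is a regular singular point of the Rayleigh operator with indicial roots $0$ and $1$, so a Frobenius expansion yields two local solutions $y_1\sim(x_2-x_c^-)$ and $y_2\sim 1 + A(x_2-x_c^-)\log|x_2-x_c^-|+\dots$ with $A=(U^-)''(x_c^-)/(U^-)'(x_c^-)$. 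Sending $c\to c+i0^+$ produces an imaginary jump in $(y^-)'$ across $x_c^-$ of size $\pi A\,y^-(x_c^-)$, and combining this with the $\sinh$ lower bounds from (1) evaluated at $x_c^-$ and at $0$ yields \eqref{E:yI->0}.

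Items (4) and (5) are corollaries. The bound \eqref{E:Y--log} comes from the fact that as $c\to U^-(0)$ the derivative $(y^-)'(0)$ acquires a $\log|U^-(0)-c|$ singularity while $y^-(0)$ stays bounded away from zero by (2); the large-$k$ asymptotic \eqref{E:Y-estimate} is the ratio of the hyperbolic profile and its derivative at $x_2=0$ given by (1). Finally, $Y^-$ is holomorphic in $c$ in the open upper half plane and approaches $k\coth(kh_-)$ at $|c|=\infty$, so applying the Plemelj--Sokhotski formula to $Y^--k\coth(kh_-)$ produces \eqref{E:Y-cauchy integral-}, and the jump formula \eqref{E:YI-} is read off directly from the Frobenius computation in (3). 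The most delicate step is (1): obtaining the uniform $O((k^2+1)^{-\beta/2})$ bound with constants that depend only on $|(U^-)'|_{C^2}$ and $|1/(U^-)'|_{C^0}$ requires careful bookkeeping of the Volterra kernel through the singular coefficient uniformly in $c$, after which the critical-layer and transform arguments for (2)--(5) follow in a standard way.
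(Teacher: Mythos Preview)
Your approach is correct and matches the paper's: this lemma is stated in the paper purely as a citation of results from \cite{LZ} (Lemmas 3.9, 3.19, 3.20, 3.22, 3.24 there), with no proof given, precisely because nothing about the upper fluid enters and the single-layer analysis carries over verbatim. Your additional sketches of the underlying arguments (Volterra comparison for (1), Frobenius/critical-layer analysis for (2)--(3), and the Plemelj--Sokhotski representation for (5)) are accurate outlines of the proofs in \cite{LZ}, but the paper itself does not reproduce them.
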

    
We also have $y^+$ and $Y^+$ satisfy the analogous properties as following.

\begin{lemma}\label{L:yY+}
Assume that $U^+$ satisfies \eqref{E:U monotone}. Let $x_c^+$ be defined in Definition \ref{D:xc}. Then $Y^+$ is $C^{l_0-3}$ in $k$ and locally $C^\alpha$ in $(c, k)\in D(Y^+)\cap \{(c, k) \mid c_I\geq 0\}$ for any $\alpha\in [0, 1)$ and analytic in $(c, k)\in D(Y^+)\setminus \big(U^+([0, h_+])\times \mathbb{R} \big)$. And the following hold. 
\begin{enumerate}
    \item For any $\beta\in (0, \frac{1}{2})$, there exists $C>0$ depending only on $\beta, |(U^+)'|_{C^2}$, and $|1/(U^+)'|_{C^0}$ such that, for any $c\in \mathbb{C}\setminus U^+([0, h_+])$, 
    \begin{equation}\label{E:y+-estimate}
        |y^+(x_2)/\sqrt{k^2+1}-sinh\big(\frac{x_2-h_+}{\sqrt{k^2+1}}\big)|\leq C(k^2+1)^{-\frac{\beta}{2}}sinh\big(\frac{h_+-x_2}{\sqrt{k^2+1}}\big).
    \end{equation}
    \item For any $k\in \mathbb{R}$, 
    \begin{equation}\label{E:y+<0}
    y^+(c, k, x_2)<0, \; \forall x_2\in [0, h_+), \; c\in \mathbb{R}\setminus  U^+\big((0, h_+) \big), \quad y^+(x_c^+)< 0 \; \text{if} \; c\in U^+\big([0, h_+)\big).    
    \end{equation} 
    \item There exists $C>0$ depending only on $U^+$ such that for any $k\in \mathbb{R}$,
     \begin{equation}\label{E:yI+>0}
    |y^+_I(c, k, 0)|\geq \frac{C}{k^2+1}|(U^+)''(x_c^+)|sinh \frac{( h_+-x_c)}{\sqrt{k^2+1}}sinh\frac{|x_c^+|}{\sqrt{k^2+1}}, \quad c\in U^+\big([0, h_+]\big).
    \end{equation}
    \item There exists $C, \rho>0$ depending only on $U^+$ such that 
    \begin{equation}\label{E:Y-+log}
        |Y^+(c, k)|\leq C\big(\sqrt{1+k^2}+\big|\log\min\{1, |U^+(0)-c|\}\big| \big), \quad \forall k\in \mathbb{R}, \; |c-U^+(0)|\leq \rho. 
    \end{equation}
    For any $\beta\in (0, \frac{1}{2})$, there exist $k_0>0$ and $C>0$ depending only on $\beta$, $|(U^+)'|_{C^2}$, and $|\frac{1}{(U^+)'}|_{C^0}$ such that, 
    \begin{equation}\label{E:Y+estimate}
        |Y^+(c, k)+ kcoth(kh_+)|\leq C\big((k^2+1)^{\frac{1-\beta}{2}}+|\log \min \{1, |U^+(0)-c|\}| \big), \quad \forall |k|\geq k_0, c\neq U^+(0). 
    \end{equation}
    \item For $c\in \mathbb{R}\setminus U^+\big([0, h_+) \big)$, $Y_I^+(c, k)=0$. If $y^+(c, k, 0)\neq 0$, 
    \begin{equation}\label{E:YI+}
        Y_I^+(c, k)=-\frac{\pi (U^+)''(x_c^+)y^+(c, k, x_c^+)^2}{|(U^+)'(x_c^+)||y^+(c, k, 0)|^2}, \quad c\in U^+\big((0, h_+) \big).
    \end{equation}
    \begin{equation}\label{E:Y-cauchy integral+}
    Y^+(c, k)=
        \begin{cases}
         \frac{1}{\pi}\int_{U^+([0, h_+])}\frac{Y^+_I(c', k)}{c'-c}dc'-kcothkh_+, \quad c\notin U^+\big([0, h_+]\big), \\
        -\mathcal{H}\big(Y^+_I(\cdot, k) \big)(c)+iY^+_I(c, k)-kcothkh_+, \quad c\in U^+\big([0, h_+]\big).
        \end{cases}
    \end{equation}
\end{enumerate}
\end{lemma}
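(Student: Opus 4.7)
\noindent\textit{Proof proposal for Lemma \ref{L:yY+}.}
The plan is to reduce every assertion to the corresponding one in Lemma \ref{L:cite-yY-} by a reflection $x_2\mapsto -x_2$ that turns the upper domain $[0,h_+]$ into a lower-type domain $[-h_+,0]$. Concretely, I set
\begin{equation*}
\tilde{U}(\tilde{x}_2):=U^+(-\tilde{x}_2),\qquad \hat{y}(c,k,\tilde{x}_2):=-y^+(c,k,-\tilde{x}_2),\qquad \tilde{x}_2\in[-h_+,0].
\end{equation*}
A direct chain-rule computation shows that, since $\tilde{U}''(\tilde{x}_2)=(U^+)''(-\tilde{x}_2)$ while the second derivative in $\tilde{x}_2$ agrees with the one in $x_2$, the function $\hat{y}$ satisfies the classical Rayleigh equation \eqref{E:E-problem-1} with profile $\tilde U$ on $[-h_+,0]$. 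Moreover $\hat{y}(-h_+)=-y^+(h_+)=0$ and $\hat{y}'(-h_+)=(y^+)'(h_+)=1$, so $\hat{y}$ is precisely the $y^-$-type fundamental solution of Lemma \ref{L:cite-yY-} attached to $\tilde{U}$, with $h_-$ replaced by $h_+$. The regularity assumption \eqref{E:U monotone} is symmetric under $\tilde{x}_2\mapsto -\tilde{x}_2$, and all estimates in Lemma \ref{L:cite-yY-} depend only on $|(U^-)'|_{C^2}$, $|1/(U^-)'|_{C^0}$, and $U^-$, each of which is preserved in absolute value under the reflection.

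With this identification, all the analytic/continuity properties of $Y^-$ transfer to $\hat{Y}(c,k):=\hat{y}'(0)/\hat{y}(0)$; one then computes
\begin{equation*}
(y^+)'(0)=\hat{y}'(0),\qquad y^+(0)=-\hat{y}(0),\qquad Y^+(c,k)=-\hat{Y}(c,k).
\end{equation*}
Statements (1)--(3) now follow by direct substitution: (1) is immediate since $\sinh\bigl((x_2-h_+)/\sqrt{k^2+1}\bigr)=-\sinh\bigl((-x_2+h_+)/\sqrt{k^2+1}\bigr)$ and the factor $-1$ coming from $y^+=-\hat{y}(-\cdot)$ matches this sign; (2) is immediate from $\hat{y}>0$ (estimate \eqref{E:y->0}) and $y^+=-\hat{y}(-\cdot)$; (3) follows from \eqref{E:yI->0} applied to $\hat y$, noting $y^+(c,k,x_c^+)=-\hat{y}(c,k,-x_c^+)$ and that $\tilde{x}_c=-x_c^+$ is the preimage of $c$ under $\tilde{U}$. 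For (4), the logarithmic bound \eqref{E:Y--log} passes through unchanged, and \eqref{E:Y+estimate} follows from \eqref{E:Y-estimate} for $\hat{Y}$ and the identity $Y^++k\coth(kh_+)=-\bigl(\hat{Y}-k\coth(kh_+)\bigr)$.

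For (5), I use $Y_I^+=-\hat{Y}_I$ together with \eqref{E:YI-} for $\hat{Y}$ and the relations $\tilde{U}''(-x_c^+)=(U^+)''(x_c^+)$, $|\tilde{U}'(-x_c^+)|=|(U^+)'(x_c^+)|$, and $|\hat{y}(c,k,-x_c^+)|=|y^+(c,k,x_c^+)|$ to recover \eqref{E:YI+}, with the extra sign in front of $\pi$ coming from $Y^+=-\hat Y$. The Cauchy-type representation \eqref{E:Y-cauchy integral+} is obtained by applying \eqref{E:Y-cauchy integral-} to $\hat{Y}$ on the range $\tilde{U}([-h_+,0])=U^+([0,h_+])$, changing the integration variable and absorbing an overall sign into the denominator $c'-c$; the accompanying constant $k\coth(kh_+)$ picks up the expected sign from $Y^+=-\hat{Y}$.

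The proof is in essence a bookkeeping exercise rather than a substantive new argument; the only point requiring care is to make sure the two sign flips coming from the reflection and from $\hat{y}=-y^+(-\cdot)$ are consistently tracked through each formula, in particular in part (5) where both $\tilde{U}''$ (even under reflection) and $\tilde{U}'$ (odd under reflection) appear and where the Hilbert transform picks up the orientation of the interval of integration. This is the one place where an error is easy to make, so I would carry it out in complete detail before presenting the identities.
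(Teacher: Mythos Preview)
The paper does not give a separate proof of this lemma; it simply introduces it with ``We also have $y^+$ and $Y^+$ satisfy the analogous properties as following'' and relies on the reader to accept it as the obvious counterpart of Lemma~\ref{L:cite-yY-} (which is itself imported from \cite{LZ}). Your reflection $x_2\mapsto -x_2$, $\tilde U(\tilde x_2)=U^+(-\tilde x_2)$, $\hat y=-y^+(-\cdot)$ is exactly the clean way to turn this analogy into a rigorous deduction, and your verification that $\hat y$ solves the Rayleigh problem for $\tilde U$ with the $y^-$-type initial data, together with $Y^+=-\hat Y$, is correct. So your approach is consistent with what the paper intends, only more explicit.

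One remark on the bookkeeping you flag in part~(5). If you apply \eqref{E:Y-cauchy integral-} verbatim to $\hat Y$ and then negate, the integral comes out with denominator $c-c'$, whereas \eqref{E:Y-cauchy integral+} is written with denominator $c'-c$. This is not a flaw in your reflection argument: the two displayed Cauchy representations in the paper are not sign-consistent with one another (one of the two has a typographical sign slip in the kernel), as you can see independently by computing the boundary value from above via Sokhotski--Plemelj and matching $+iY_I^\pm$. Your caution about carrying out part~(5) ``in complete detail'' is therefore well placed; just be aware that the discrepancy you will find traces back to the printed formulas rather than to your method.
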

\begin{remark}\label{R:Y-defined-cI-sign}
1). \eqref{E:y->0} and \eqref{E:y+<0} still hold without assuming $(U^\pm)'\neq 0$.
2). For $c\in U^-\big((-h_-, 0)\big)$, if $Y^-(c, k)$ is defined as $\lim_{\beta\rightarrow 0+}Y^-(c-i\beta, k)$, then $sgn(Y^-_I)$ is different, i.e. $-Y^-_I$ satisfies \eqref{E:YI-}. $Y_I^+$ has an analogous property. In this paper, we shall only use the properties of $Y^\pm_I$ for $c_I\geq 0$. Based on \eqref{E:yI->0}, $Y^-$ is well defined for $c\in U^-\big((-h_-, 0)\big)$ under the assumption that $(U^-)''\neq 0$. According to \eqref{E:yI+>0}, $Y^+$ is well defined for $c\in U^+\big((0, h_+)\big)$ if $(U^+)''\neq 0$.
\end{remark}
\end{section}

\begin{section}{Distribution of Eigenvalues}\label{S:distribution of eigenvalues}
In this section, we shall consider the situation where the upper layer fluid is lighter than the lower layer fluid. In other words, we shall discuss the linear instability of shear flows \eqref{E:Shear flow} which satisfy \eqref{E:U monotone} for $0\leq \rho^+\leq \rho^-$. Some of the results are also true for large $\epsilon=\frac{\rho^+}{\rho^-}$. Since the linear system \eqref{E:LEuler} preserves Fourier mode $e^{ikx_1}$ for any $k\in \mathbb{R}$, we will treat the wave number $k\in \mathbb{R}$ as a parameter. According to Lemma \ref{L:E-value form}, $-ikc$ with $c\in \mathbb{C}\setminus \Big(U^-\big([-h_-, 0] \big)\cup U^+\big([0, h_+] \big)\Big)$ is an eigenvalue of \eqref{E:LEuler} with parameter $k$ if  
\begin{subequations}\label{E:F-definition}
\begin{equation}\label{E:F-definition-up}
\begin{split}
F(c, k, \epsilon):= & \epsilon\Big((U^+)'(0)\big(U^+(0)-c\big) -Y^+\big(U^+(0)-c\big)^2\Big) -(U^-)'(0)\big(U^-(0)-c\big)\\&+Y^-\big(U^-(0)-c\big)^2
-\big(g(1-\epsilon)+\frac{\sigma}{\rho^-}k^2\big)=0, 
\end{split}
\end{equation}
where $Y^\pm$ are defined in \eqref{E:Y-definition}.  According to \eqref{E:Y--log}, \eqref{E:YI-}, \eqref{E:Y-+log},  and \eqref{E:YI+}, we also define that 
\begin{equation}\label{E:F-definition-0}
    F(c, k, \epsilon):=\lim_{\beta\rightarrow 0+}F(c+i\beta, k, \epsilon)=\overline{\lim_{\beta\rightarrow 0+}F(c-i\beta, k, \epsilon)}, \quad \forall c\in U^-\big([-h_-, 0)\big)\cup U^+\big((0, h_+]\big). 
\end{equation}
\end{subequations}
Then, the zeros of $F(c, k, \epsilon)$ correspond to singular or non-singular mode $(c, k)$ which is defined in Definition \ref{D:mode}.
Based on Lemma 4.1 in \cite{LZ}, Lemma \ref{L:cite-yY-}, and Lemma \ref{L:yY+}, we give some basic properties of $F(c, k, \epsilon)$ for each $\epsilon\geq 0$. 

\begin{lemma}\label{L:F-regularity}
Assume $U^\pm$ satisfy \eqref{E:U monotone}. Then the following hold. 
\begin{enumerate}
    \item $F(c, k, \epsilon)$ is well defined for all $k\in \mathbb{R}$,  $c\in \mathbb{C}$, and $\epsilon\geq 0$. 
    \item When restricted to $c_I\geq 0$,  $F(c, k, \epsilon)$ is $C^{l_0-3}$ in $k$,  $C^{1, \alpha}$ in $c$, for any $\alpha\in [0, 1)$, and $C^\infty$ in $\epsilon\geq 0$.
\end{enumerate}

\end{lemma}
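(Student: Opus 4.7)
The plan splits into the two parts of the statement. The crucial structural observation is that in \eqref{E:F-definition-up} all terms other than the two products $Y^\pm(c, k)(U^\pm(0) - c)^2$ are polynomial in $c, k$ and affine in $\epsilon$; the entire analysis thus reduces to these two products on the closed upper half-plane $\{c_I \geq 0\}$, with the lower half-plane handled by the complex conjugate symmetry built into \eqref{E:F-definition-0}.

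For well-definedness (part 1), in the region $c_I > 0$ and on the real axis outside $U^-([-h_-, 0]) \cup U^+([0, h_+])$, $y^\pm(c, k, 0)$ does not vanish by \eqref{E:y->0}, \eqref{E:y+<0} together with a standard Rayleigh ODE argument when $c_I > 0$, so $Y^\pm$ is well-defined by \eqref{E:Y-definition} and $F$ is given directly by \eqref{E:F-definition-up}. The only potential singularities sit on the real intervals $U^\pm([-h_\pm, 0])$. At the endpoints $c = U^\pm(0)$, the logarithmic bounds \eqref{E:Y--log} and \eqref{E:Y-+log} give $(U^\pm(0) - c)^2 Y^\pm \to 0$, so $F$ extends continuously. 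For the remaining real values in $U^-([-h_-, 0)) \cup U^+((0, h_+])$, $F$ is defined by the non-tangential boundary-value prescription \eqref{E:F-definition-0}; existence of the boundary limit of $Y^\pm$ then follows from the Cauchy-type representations \eqref{E:Y-cauchy integral-} and \eqref{E:Y-cauchy integral+}, which present $Y^\pm$ on $\{c_I \geq 0\}$ as a singular integral of $Y^\pm_I$ plus a smooth correction.

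For part (2), $C^\infty$ dependence on $\epsilon$ is immediate because $F$ is affine in $\epsilon$. The $C^{l_0 - 3}$ regularity in $k$ is inherited term-by-term from Lemmas \ref{L:cite-yY-} and \ref{L:yY+}. For the $C^{1, \alpha}$ regularity in $c$ on $\{c_I \geq 0\}$, the open set where $Y^\pm$ is analytic contributes trivially. Near $c = U^\pm(0)$, a Frobenius analysis of \eqref{E:E-problem-1} at its regular singular point, combined with \eqref{E:Y--log} and \eqref{E:Y-+log}, lets one write $Y^\pm = A_\pm(c, k) \log(U^\pm(0) - c) + B_\pm(c, k)$ with $A_\pm, B_\pm$ locally bounded and smoother than $C^\alpha$ in $c$; multiplication by $(U^\pm(0) - c)^2$ then reduces the local analysis to the model $t \mapsto t^2 \log t$ on $\{\operatorname{Im} t \geq 0\}$, which belongs to $C^{1, \alpha}$ for every $\alpha \in [0, 1)$.

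The main obstacle is the $C^{1, \alpha}$ regularity in $c$ at interior real critical values $c \in U^\pm((-h_\pm, 0)) \setminus \{U^\pm(0)\}$, since the local $C^\alpha$ regularity of $Y^\pm$ stated in Lemmas \ref{L:cite-yY-} and \ref{L:yY+} alone would yield only $C^\alpha$ for the product. The plan to upgrade to $C^{1, \alpha}$ is to exploit the Hilbert transform representation \eqref{E:Y-cauchy integral-}, \eqref{E:Y-cauchy integral+} together with the explicit formulas \eqref{E:YI-}, \eqref{E:YI+} for $Y^\pm_I$: since $U^\pm \in C^{l_0}$ with $l_0 \geq 6$, the density $Y^\pm_I(\cdot, k)$ is $C^{l_0 - 2}$-smooth in $c$ on the interior of the critical range, and classical Hilbert-transform estimates for compactly supported Hölder data then give the required $C^{1, \alpha}$ regularity of $Y^\pm$ on the interior for every $\alpha \in [0, 1)$. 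Piecing these local estimates together and matching them to the endpoint analysis at $c = U^\pm(0)$ will be the technical heart of the proof.
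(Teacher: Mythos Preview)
The paper does not give a self-contained proof of this lemma; it simply records it as a consequence of Lemma~4.1 in \cite{LZ} (the one-fluid analogue) together with the $Y^\pm$ properties collected in Lemmas~\ref{L:cite-yY-} and~\ref{L:yY+}. Your proposal supplies an actual argument, and its architecture---reduce everything to the two products $(U^\pm(0)-c)^2Y^\pm$, treat the endpoints $c=U^\pm(0)$ via the $t^2\log t$ model, and handle the interior of the critical ranges through the Cauchy/Hilbert representations \eqref{E:Y-cauchy integral-}, \eqref{E:Y-cauchy integral+}---is the route taken in \cite{LZ} for the one-fluid $F$. So in spirit you are reconstructing the cited proof rather than diverging from it.

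Two technical claims in your outline need care. First, the assertion that $y^\pm(c,k,0)\neq0$ for all $c_I>0$ by ``a standard Rayleigh ODE argument'' is not available under \eqref{E:U monotone} alone: zeros of $y^-(\cdot,k,0)$ in the open upper half-plane are exactly unstable eigenvalues of the \emph{fixed-channel} Rayleigh problem, and monotone $U^-$ with sign-changing $(U^-)''$ can possess such eigenvalues (Rayleigh's inflection-point theorem gives only a necessary condition). Under the extra hypothesis $(U^\pm)''\neq0$---cf.\ Remark~\ref{R:Y-defined-cI-sign}(2), and note that every subsequent lemma in Section~\ref{S:distribution of eigenvalues} imposes it---the imaginary-part identity does force non-vanishing and your step goes through; without it $F$ is only meromorphic in $\{c_I>0\}$.

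Second, your claim that the density $Y_I^\pm(\cdot,k)$ is $C^{l_0-2}$ on the interior is too optimistic as stated: formula \eqref{E:YI-} contains $|y^-(c,k,0)|^2$ in the denominator, and the regularity of $y^-(c,k,0)$ in $c$ across the moving singular point is precisely the delicate output of the Frobenius analysis in \cite{LZ}, not an input you can assume. The Hilbert-transform bootstrap is the right mechanism for the $C^{1,\alpha}$ upgrade, but you must first extract the regularity of the density from the Frobenius decomposition near $x_c^\pm$ (this is where the restriction $l_0\ge 6$ is actually used) rather than read it off \eqref{E:YI-} directly.
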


To study the eigenvalue distribution, we consider wave number $k$ as a parameter. We address the location of wave speed $c\in \mathbb{C}$ by starting from large $|k|$. And then we study what happens to the eigenvalues as $|k|$ decreases from infinity. 

\begin{lemma}\label{L:c-large k}
Assume that $U^\pm$ satisfy \eqref{E:U monotone}. Then there exist some $k_0>0$ and $C>0$ depending only on $|(U^\pm)'|_{C^2}$, and $|1/(U^\pm)'|_{C^0}$, such that for each $\epsilon\in [0, 1]$ and all $|k|>k_0$, $F(c, k, \epsilon)$ defined by \eqref{E:F-definition} has exactly two solutions $c^\pm(k)$ depending on $k$ analytically. Moreover, 
\begin{equation}\label{E:c real even-large k}
    c^\pm(k)\in \mathbb{R}, \; c^\pm(k)=c^\pm(-k),
\end{equation}
\begin{equation}\label{E:estimate for c-large k}
    \Big|c^\pm(k)-\frac{U^+(0)\epsilon+U^-(0)}{1+\epsilon}\mp\sqrt{-\frac{\epsilon\big(U^+(0)-U^-(0)\big)^2}{(1+\epsilon)^2}+\frac{\sigma |k|}{\rho^-(1+\epsilon)}}\Big|\leq C,
\end{equation} 
\begin{equation}\label{E:est-large c-pcF}
    \big|\partial_c F\big(c^\pm(k), k, \epsilon\big)\mp 2|k|^{\frac{3}{2}}\sqrt{\sigma(1+\epsilon)/\rho^-}\big|\leq C|k|. 
\end{equation}
\end{lemma}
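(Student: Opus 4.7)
The plan is to extract a leading quadratic in $c$ from $F(c,k,\epsilon)$ using the sharp asymptotics of $Y^\pm$ in Lemmas \ref{L:cite-yY-}(4) and \ref{L:yY+}(4), solve it explicitly, and then perturb to a true zero of $F$ by contraction mapping. Using $k\coth(kh_\pm) = |k| + O(|k|e^{-2|k|h_\pm})$ in \eqref{E:Y-estimate} and \eqref{E:Y+estimate}, one has $Y^-(c,k) = |k| + O(|k|^{(1-\beta)/2})$ and $Y^+(c,k) = -|k| + O(|k|^{(1-\beta)/2})$ for $c$ bounded away from $U^\pm(0)$. Substituting into \eqref{E:F-definition-up} gives the decomposition $F = G + R$, with
\begin{align*}
G(c,k,\epsilon) := |k|\bigl[(1+\epsilon)c^2 - 2(U^-(0) + \epsilon U^+(0))c + U^-(0)^2 + \epsilon U^+(0)^2\bigr] - \tfrac{\sigma}{\rho^-}k^2,
\end{align*}
and $|R(c,k,\epsilon)| = O\bigl(|k|^{(1-\beta)/2}(1+|c|^2) + (1+|c|)\bigr)$. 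The two explicit roots of $G$ are
\begin{align*}
c_0^\pm(k,\epsilon) = \tfrac{U^-(0)+\epsilon U^+(0)}{1+\epsilon} \pm \sqrt{\tfrac{\sigma|k|}{\rho^-(1+\epsilon)} - \tfrac{\epsilon(U^+(0)-U^-(0))^2}{(1+\epsilon)^2}},
\end{align*}
both real for $|k|$ sufficiently large, and they agree with the center of the bound in \eqref{E:estimate for c-large k}.

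To pass from $c_0^\pm$ to a true zero of $F$, I would substitute $c = c_0^\pm + w$ and solve the fixed-point problem $w = -F(c_0^\pm + w, k,\epsilon)/\partial_c G(c_0^\pm,k,\epsilon)$ on a real ball of $O(1)$ radius. A direct computation gives $\partial_c G(c_0^\pm,k,\epsilon) = \pm 2|k|(1+\epsilon)\Delta(k,\epsilon)$, where $\Delta$ is the square root in the formula for $c_0^\pm$, so $\partial_c G(c_0^\pm,k,\epsilon) = \pm 2|k|^{3/2}\sqrt{\sigma(1+\epsilon)/\rho^-}\,(1+o(1))$, of exact order $|k|^{3/2}$ and bounded below uniformly in $\epsilon\in[0,1]$. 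Meanwhile $F(c_0^\pm,k,\epsilon) = R(c_0^\pm,k,\epsilon) = O(|k|^{(3-\beta)/2})$ since $c_0^\pm = O(|k|^{1/2})$, so the Newton increment is $O(|k|^{-\beta/2})$; the Lipschitz constant of the map is $o(1)$ by differentiating the $R$-estimate, yielding a unique real fixed point $c^\pm(k)$ that satisfies \eqref{E:estimate for c-large k}. Analyticity of $c^\pm(k)$ in $k$ follows from the analytic implicit function theorem, applicable because $F$ is jointly analytic in $(c,k)$ off $U^-([-h_-,0])\cup U^+([0,h_+])$ in $c$, and because $c^\pm(k)\sim \pm\sqrt{|k|}$ stays far from these bounded sets. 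Evenness $c^\pm(-k)=c^\pm(k)$ is inherited from the fact that the Rayleigh equation \eqref{E:E-problem-1} depends on $k$ only through $k^2$, so $Y^\pm$ and $F$ are even in $k$. Finally, \eqref{E:est-large c-pcF} follows by differentiating the decomposition: $\partial_c F(c^\pm,k,\epsilon) = \partial_c G(c_0^\pm,k,\epsilon) + O(|k|)$, the $O(|k|)$ absorbing both the contribution from $\partial_c R$ and the shift $c^\pm - c_0^\pm = O(|k|^{-\beta/2})$.

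For the ``exactly two'' claim in the whole complex plane, I would apply Rouch\'e's theorem on the disk $|c-\bar{U}| \leq R_0 |k|^{1/2}$ with $\bar{U} = (U^-(0)+\epsilon U^+(0))/(1+\epsilon)$ and $R_0$ a sufficiently large fixed constant: on its boundary, $|G| \gtrsim R_0^2 |k|^2$ while $|R| = O(R_0^2 |k|^{(3-\beta)/2})$, so $|R|<|G|$ and $F$ has the same zero count inside as $G$, namely $2$. For $c$ outside this disk one has $|G(c,k,\epsilon)| \gtrsim |k||c|^2 \gg |R|$; in the bounded strip near $U^\pm([0,h_\pm])\cup U^-([-h_-,0])$ (where the asymptotics of $Y^\pm$ degenerate) the term $-\sigma k^2/\rho^-$ dominates $F$, giving $|F|\gtrsim k^2$ and ruling out extra zeros there. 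The main obstacle is establishing uniformity in $\epsilon\in[0,1]$, in particular down to $\epsilon=0$ where the upper-fluid contribution degenerates: one needs the discriminant and the coefficient $2|k|(1+\epsilon)\Delta$ to stay bounded below uniformly in $\epsilon$, which is transparent from the explicit formula once $|k|$ is large enough that $\tfrac{\sigma|k|}{\rho^-(1+\epsilon)} \gg 1$. A secondary technical point is that the $Y^\pm$ asymptotics deteriorate near $c = U^\pm(0)$, but the solutions $c^\pm(k)\sim\pm\sqrt{|k|}$ stay safely away for $|k|$ large.
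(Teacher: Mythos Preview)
Your overall strategy—extract the leading quadratic, solve it, and close by a contraction/Newton argument—is essentially the paper's. But there are two concrete gaps that prevent your argument from closing.

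\textbf{The $Y^\pm$ remainder is too large.} You quote $Y^-(c,k)=|k|+O(|k|^{(1-\beta)/2})$, but the bound in \eqref{E:Y-estimate} is $(k^2+1)^{(1-\beta)/2}\sim |k|^{1-\beta}$, not $|k|^{(1-\beta)/2}$. With the correct exponent your remainder satisfies $R(c_0^\pm,k,\epsilon)=O(|k|^{1-\beta}\,|c_0^\pm|^2)=O(|k|^{2-\beta})$, so the Newton increment is $O(|k|^{1/2-\beta})$, which \emph{grows} for every admissible $\beta\in(0,\tfrac12)$; the contraction never closes. The paper fixes this by not using \eqref{E:Y-estimate} at all on the relevant real sets $S_\pm=\{c<a-\gamma\}\cup\{c>b+\gamma\}$: instead it combines the Cauchy-integral representation \eqref{E:Y-cauchy integral-}, \eqref{E:Y-cauchy integral+} with the pointwise bound \eqref{E:y--estimate} on $y^\pm$ to get the much sharper $|Y^\pm(c,k)\mp k\coth(kh_\pm)|\le C|k|^{-1}$ for $c\in S_\pm$. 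That estimate is what makes the quadratic picture exact to $O(1)$ and the fixed-point map a contraction.

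\textbf{The Lipschitz step is unjustified.} Saying ``differentiate the $R$-estimate'' hides the real work: you need $|\partial_c Y^\pm(c,k)|\le C|k|^{-1}$ on $S_\pm$, and this does not follow from differentiating an inequality. The paper derives it by differentiating the Rayleigh equation in $c$, integrating against $y^\pm$, and obtaining the explicit formulas \eqref{E:partial-c-Y+}--\eqref{E:partial-c-Y-}, which together with \eqref{E:y--estimate}, \eqref{E:y+-estimate} give the required decay. Without this input your contraction constant is uncontrolled. A smaller point: your Rouch\'e count on the disk $|c-\bar U|\le R_0|k|^{1/2}$ is problematic because for large $|k|$ that disk contains $U^-([-h_-,0])\cup U^+([0,h_+])$, along which $F$ is not holomorphic. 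The paper avoids this by first invoking Theorem~\ref{T:semicircle} to confine any nonreal zero to the fixed semicircle, then showing $F<-\tfrac{\sigma}{2\rho^-}k^2$ there for large $|k|$, so all zeros are real and lie in $S_\pm$; uniqueness then comes from the contraction on $S_\pm$ rather than from Rouch\'e.
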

\begin{proof}
Since $\epsilon\in [0, 1]$, \eqref{E:semicircle assumption} holds for all $k\in \mathbb{R}$. According to \eqref{E:Y-estimate}, and \eqref{E:Y+estimate}, $Y^\pm(c, k)$ are comparable to $\mp |k|$ as $|k|$ tends to $\infty$. Then, for large $|k|$, $F(c, k, \epsilon)$ behaves more or less like a quadratic function in $k$ with a negative leading coefficient. Hence, there exist $N_1, \gamma>0$ such that for all $|k|>N_1$, if $ y^\pm(c, k, x_2)$ with $(c_R-\frac{a+b}{2})^2+c_I^2\leq b+2\gamma-a$, 
solves the system \eqref{E:y+} and \eqref{E:y-}, then $F<-\frac{\sigma}{2\rho^-}k^2<0$. Theorem \ref{T:semicircle} implies that if $-ikc$ is an eigenvalue with $c$ lying outside the semicircle \eqref{E:semicircle}, then $c\in \mathbb{R}\setminus [a, b]$. Therefore, if $\big(c, k, y^\pm(c, k, x_2)\big)$ with $|k|>N_1$ 
solves the system \eqref{E:y+}, \eqref{E:y-}, and \eqref{E:F-definition-up}, then $c_R\in [a-\gamma, b+\gamma]$, where $a$ and $b$ are defined in \eqref{E:ab}. Let
\begin{equation}\label{E:S+-}
    S_+:=\{c>b+\gamma\}, \quad S_-:=\{c<a-\gamma\}.
\end{equation}
By the definition of $F(c, k, \epsilon)$, $F$ is $C^\infty$ in $(c, k)\in S_\pm\times \mathbb{R}$. For each $\epsilon\in [0, 1]$, we consider $F(c, k, \epsilon)=0$ as a quadratic equation of $c$. Its roots satisfy 
\begin{align*}
    c=f^\pm(c, k)= \frac{-B\pm\sqrt{B^2-4\big(Y^-(c, k)-\epsilon Y^+(c, k)\big)A}}{2\big(Y^-(c, k)-\epsilon Y^+(c, k)\big)},
\end{align*}
where
\begin{align*}
A:= & \epsilon\big((U^+)'(0)U^+(0)-Y^+U^+(0)^2\big)-(U^-)'(0)U^-(0)+Y^-U^-(0)^2-g(1-\epsilon)-\frac{\sigma}{\rho^-}k^2,\\
B:= & (U^-)'(0)-2U^-(0)Y^--(U^+)'(0)\epsilon+2Y^+U^+(0)\epsilon.
\end{align*}
Using formulas \eqref{E:Y-cauchy integral-} and \eqref{E:YI-} with estimate \eqref{E:y--estimate}, we obtain that there exist $C>0$ and large $N_1>0$ such that for all $c\in S_+\cup S_-$ and $|k|>N_1$,
\begin{align*}
    |Y^-(c, k)-kcothkh|=\frac{1}{\pi}\Big|\int_{U^-([-h_-, 0])} \frac{Y_I^-(c', k)}{c-c'}dc'\Big| \leq & C \int_{U^-[-h_-, 0]}\frac{y^-(c, k, x_c)^2}{|y^-(c, k, 0)|^2}dc \\
     \leq & C \int_{-h_-}^0 e^{\sqrt{k^2+1}x_c}dc\leq C(|k|+1)^{-1}.  
\end{align*}
A similar computation based on \eqref{E:Y-cauchy integral+},
\eqref{E:y+-estimate}
,   and \eqref{E:YI+} leads to 
\begin{align*}    
|Y^+(c, k)+kcothkh|\leq C(|k|+1)^{-1}. 
\end{align*}
According to these two estimates and the fact that $coth(x)=1+2(e^{2x}-1)^{-1}$, there exists $k_0>N_1$ and $C>0$ depending only on $|(U^\pm)'|_{C^2}$, and $|1/(U^\pm)'|_{C^0}$, such that for all $|k|>k_0$, 
\begin{align*}
    \Big|f^\pm(c, k)-\frac{U^+(0)\epsilon+U^-(0)}{1+\epsilon}\mp\sqrt{-\frac{\epsilon\big(U^+(0)-U^-(0)\big)^2}{(1+\epsilon)^2}+\frac{\sigma |k|}{\rho^-(1+\epsilon)}}\Big|\leq C.
\end{align*}
Then, $f^\pm(c, k):S_\pm\rightarrow S_\pm$ for $|k|>k_0$. It remains to evaluate $\partial_c f^\pm(c, k)$ and show $f^\pm(c, k)$ is a contraction acting on $S_\pm$. We first compute $\partial_c Y^\pm$. While $\partial_c Y^-$ was obtained and evaluated in \cite{LZ}, we compute $\partial_c Y^\pm$ here  for self-completeness. 

We differentiate \eqref{E:E-problem-1}  in $c\in \mathbb{R}\setminus U^-([-h_-, 0])$ and $c\in \mathbb{R}\setminus U^+([0, h_+])$ respectively, 
\begin{equation}\label{E:Rayleigh-partial-c}
    \begin{cases}
    -\partial_c (y^\pm)''(x_2)+\big(k^2+\frac{(U^\pm)''}{U^\pm-c}\big)\partial_c y^\pm(x_2)=-\frac{(U^\pm)''}{(U^\pm-c)^2}y^\pm(x_2), \qquad \pm x_2\in (0, h_\pm),\\
    \partial_c y^\pm(\pm h_\pm)=\partial_c (y^\pm)'(\pm h_\pm)=0. 
    \end{cases} 
\end{equation}
We compute $\eqref{E:E-problem-1}\partial_c y^\pm(x_2)-\eqref{E:Rayleigh-partial-c} y^\pm(x_2)$ and obtain that,
\begin{align*}
    -(y^\pm)'' \partial_c y^\pm+\partial_c (y^\pm)'' y^\pm=\frac{(U^\pm)''}{(U^\pm-c)^2}(y^\pm)^2, \qquad \pm x_2\in (0,  h_\pm).
\end{align*}
Using the boundary conditions in \eqref{E:Rayleigh-partial-c}, we integrate the above equation on $[-h_-, 0]$ and $[0, h_+]$ respectively, and obtain that 
\begin{align*}
    -(y^+)'(0)\partial_c y^+(0)+\partial_c (y^+)'(0)y^+(0)=-\int_0^{h_+} \frac{(U^+)''}{(U^+-c)^2}(y^+)^2 dx_2,\\
    -(y^-)'(0)\partial_c y^-(0)+\partial_c (y^-)'(0)y^-(0)=\int_{-h_-}^0 \frac{(U^-)''}{(U^--c)^2}(y^-)^2 dx_2.
\end{align*}
This implies that 
\begin{subequations}\label{E:partial-c-Y}
\begin{equation}\label{E:partial-c-Y+}
    \partial_c Y^+(c,  k)=-\frac{1}{y^+(0)^2}\int_0^{h_+}\frac{(U^+)''}{(U^+-c)^2}(y^+)^2 dx_2,
\end{equation} 
\begin{equation}\label{E:partial-c-Y-}
    \partial_c Y^-(c,  k)=\frac{1}{y^-(0)^2}\int_{-h_-}^0\frac{(U^-)''}{(U^--c)^2}(y^-)^2 dx_2.
\end{equation} 
\end{subequations}
Using \eqref{E:y--estimate} and \eqref{E:y+-estimate}, we obtain that there exist $C_1>0$ such that for all $|k|>k_0$ and $c\in S_+\cup S_-$, 
\begin{align*}
|\partial_c Y^\pm|\leq \frac{C_1}{|k|}.
\end{align*}
We compute $\frac{df^\pm(\cdot, k)}{dc}$. Based on \eqref{E:Y-estimate}, \eqref{E:Y+estimate}, and the estimate of $|\partial_c Y^\pm|$, we can choose $k_0$ large such that the term involving $\frac{dA}{dc}$ dominates in $\frac{df^\pm(\cdot, k)}{dc}$. Then one may check that 
\begin{align*}
    |\frac{df^\pm(\cdot, k)}{dc}|\leq \frac{C}{\sqrt{|k|}}<1, \quad  \forall |k|\geq k_0, 
\end{align*}
where $C$ is independent of $k$, $c$, and $\epsilon$. Therefore, $f^\pm(c, k)$ are contractions acting on $S^\pm$ respectively. Then their fixed points $c^\pm(k)$ are the only solutions to system \eqref{E:E-problem-1}, \eqref{E:E-problem-2}, and \eqref{E:E-problem-n} on $S_\pm$. Moreover, they are analytic in $k$. Finally, one may compute
\begin{equation}\label{E:partial-c-F}
\begin{split}
    \partial_c F = & \epsilon\Big(-(U^+)'(0)+2\big(U^+(0)-c\big)Y^+-\partial_c Y^+\big(U^+(0)-c)^2\Big)\\&+\partial_cY^-\big(U^-(0)-c)^2-2\big(U^-(0)-c\big)Y^-+(U^-)'(0). 
\end{split}
\end{equation}
Then by using the above estimates of $|Y^\pm\pm kcothkh|$, $\partial_c Y^\pm$, and $c^\pm(k)$, the last desired estimate of $\partial_c F\big(c^\pm(k), k, \epsilon\big)$ can be obtained. This completes the proof of the lemma. 
\end{proof}
\begin{remark}\label{R:c-pm-extend}
1). When $\epsilon=0$, $c^\pm(k)$ obtained in Lemma \ref{L:c-large k} coincide with those obtained in \cite{LZ}. Later in Section \ref{S:Ocean-Air Model}, this fact will be used. 

2). Recall that $a$ and $b$ are defined in \eqref{E:ab}. Suppose that $k_+\geq 0$ is the biggest $|k|$ such that $F(b, k_+, \epsilon)=0$, and $k_-\geq 0$ is the biggest $|k|$ such that $F(a, k_-, \epsilon)=0$. By Lemma 4.3 in \cite{LZ} and Theorem \ref{T:semicircle}, for each $\epsilon\in [0, 1]$, $c^+(k)$ (resp. $c^-(k)$) obtained in Lemma \ref{L:c-large k} can be extended to be an even and analytic function of $k$ for all $|k|\in [k_+, \infty)$ (resp. $|k|\in [k_-, \infty)$) such that
\begin{equation}\label{E:cpm-k-ep=0}
\begin{split}
    F\big(c^+(k), k, \epsilon\big)=0, \; \forall |k|\in [k_+, \infty), \quad  c^+(k)>b, \; \forall |k|\in (k_+, \infty), \\
    F\big(c^-(k), k, \epsilon\big)=0,\; \forall |k|\in [k_-, \infty), \quad  c^-(k)<a,\; \forall |k|\in (k_-, \infty). 
\end{split}
\end{equation}
Moreover,  $c^+(k)$ (resp. $c^-(k)$) can be extended to be an analytic function for all $|k|\in [0, \infty)$ if for any $k\in \mathbb{R}$, $(b, k)$ (resp. $(a, k)$) is not a neutral mode. In addition, for $c_R>b$ and $c_I\in \mathbb{R}$, $c^+(k)$ is the unique root of $F(\cdot, k, \epsilon)=0$, (resp. for $c_R<a$ and $c_I\in \mathbb{R}$, $c^-(k)$ is the unique root of $F(\cdot, k, \epsilon)=0$.)

Since $F(c, k, \epsilon)\in \mathbb{R}$ for all $c\in \mathbb{R}\setminus [a, b]$, $\partial_c F(c^\pm(k), k, \epsilon)$ does not change sign along these simple roots. Hence, the signs of $\partial_c F$ in \eqref{E:est-large c-pcF}, Lemma \ref{L:cite-yY-}, Lemma \ref{L:yY+}, and Lemma \ref{L:F-regularity} imply that 
\begin{equation}\label{E:ext-c-pm-pcF}
    \partial_c F\big(c^+(k), k, \epsilon\big)>0, \; \forall |k|\in [k_+, \infty), \quad \partial_c F\big(c^-(k), k, \epsilon\big)<0, \; \forall |k|\in [k_-, \infty). 
\end{equation}
Particularly,  since $Y^-(c, k)$ has logarithmic singularity $log|U^-(0)-c|$ near $c=U^-(0)$,  
\begin{equation}\label{E:pcF-esp-0-c+}
    F\big(U^-(0), k, 0\big)=-g-\frac{\sigma}{\rho^-}k^2<0, \quad \partial_c F\big(c^+(k), k, 0\big)>0, \; \forall k\in \mathbb{R}. 
\end{equation}
\end{remark}

According to Remark \ref{R:c-pm-extend}(2), $c^\pm(k)$ can keep continuity in $k$ as simple roots of the analytic function $F(\cdot, k, \epsilon)$ as $|k|$ gets smaller. We shall track $c^\pm(k)$ as $|k|$ decreases to study possible bifurcation and seek instability. 
Before studying the eigenvalue distribution as $|k|$ decreases, we prove some properties of $F(c, k, \epsilon)$ in the following lemma. Even though some properties of $Y^-(c, k)$ were obtained in \cite{LZ}, we present the proof for self-completeness.

\begin{lemma}\label{L:p-KK-F}
Assume that $U^\pm$ satisfy \eqref{E:U monotone}. Suppose that $K=k^2$ and $\epsilon\geq 0$. Let $a$ and $b$ be defined in \eqref{E:ab}. Then for $c\in \mathbb{R}\setminus \Big( U^-\big((-h_-, 0]\big)\cup U^+\big([0, h_+)\big)\Big)$, the following hold.
\begin{enumerate}
    \item \begin{equation}\label{E:p-KK-F}
  \partial_{KK}F(c, k, \epsilon)<0. 
\end{equation}
\item If both $(U^+)''>0$ and  $(U^-)''>0$, then $\partial_{Kc}F(c, k, \epsilon)<0$ for all $c\leq a$ and $k\in \mathbb{R}$.
\item If both $(U^+)''<0$ and $(U^-)''<0$, then $\partial_{Kc}F(c, k, \epsilon)>0$ for all $c\geq b$ and $k\in \mathbb{R}$. 
\end{enumerate}
\end{lemma}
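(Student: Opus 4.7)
Following the proof of Theorem~\ref{T:semicircle}, I set $\Psi^\pm := v_2^\pm/(c-U^\pm)$, so that $\Psi^\pm$ solves
\[
-\bigl((U^\pm-c)^2(\Psi^\pm)'\bigr)' + K(U^\pm-c)^2\Psi^\pm = 0,\quad \Psi^\pm(\pm h_\pm)=0,\ \Psi^\pm(0)=-1,
\]
with $K:=k^2$. Multiplying by $\Psi^\pm$ and integrating by parts, one rewrites $F$ defined in \eqref{E:F-definition-up} as
\[
F(c,k,\epsilon) = \epsilon J^+(\Psi^+,c,K) + J^-(\Psi^-,c,K) - g(1-\epsilon) - \frac{\sigma}{\rho^-} K,
\]
where $J^\pm(\Psi,c,K) := \int (U^\pm-c)^2[(\Psi')^2 + K\Psi^2]\,dx_2$ is a strictly convex quadratic form in $\Psi$ (the weight $(U^\pm-c)^2$ being positive on the open interior for admissible $c$), and $\Psi^\pm$ is its unique minimizer over profiles satisfying the stated boundary data. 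By the envelope theorem, $\partial_K J^\pm = I^\pm := \int (U^\pm-c)^2(\Psi^\pm)^2\,dx_2$.

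\textbf{Part (1).} Differentiating once more in $K$ gives $\partial_{KK}J^\pm = 2\int(U^\pm-c)^2 \Psi^\pm \partial_K\Psi^\pm\,dx_2$. The derivative $\partial_K\Psi^\pm$ vanishes at both endpoints of its interval and satisfies
\[
-\bigl((U^\pm-c)^2(\partial_K\Psi^\pm)'\bigr)' + K(U^\pm-c)^2 \partial_K\Psi^\pm = -(U^\pm-c)^2 \Psi^\pm.
\]
Multiplying by $\partial_K\Psi^\pm$ and integrating by parts produces
\[
\int(U^\pm-c)^2\bigl[((\partial_K\Psi^\pm)')^2 + K(\partial_K\Psi^\pm)^2\bigr]\,dx_2 = -\int(U^\pm-c)^2 \Psi^\pm \partial_K\Psi^\pm\,dx_2.
\]
The left side is strictly positive since $\partial_K\Psi^\pm \not\equiv 0$ (otherwise the $K$-dependent equation forces $\Psi^\pm\equiv 0$ on the interior, contradicting $\Psi^\pm(0)=-1$). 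Hence $\partial_{KK}J^\pm < 0$ and $\partial_{KK}F < 0$.

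\textbf{Parts (2) and (3).} I revert to the Rayleigh eigenfunction $v_2^\pm = (U^\pm(0)-c)\phi^\pm$ with $\phi^\pm(0)=1$ and $\phi^\pm(\pm h_\pm)=0$, so $I^\pm = \int(v_2^\pm)^2\,dx_2$ and $\partial_{Kc} F = 2\epsilon\int v_2^+\partial_c v_2^+\,dx_2 + 2\int v_2^- \partial_c v_2^-\,dx_2$. The function $\partial_c v_2^\pm$ satisfies the inhomogeneous Rayleigh equation with $\partial_c v_2^\pm(\pm h_\pm) = 0$ but $\partial_c v_2^\pm(0) = -1$, which blocks a direct maximum-principle argument. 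The key step is to introduce
\[
w^\pm := \partial_c v_2^\pm + \frac{v_2^\pm}{U^\pm(0)-c},
\]
which has $w^\pm(\pm h_\pm) = w^\pm(0) = 0$ and, by linearity,
\[
L^\pm w^\pm = -\frac{(U^\pm)''}{(U^\pm-c)^2} v_2^\pm, \qquad L^\pm := -\partial_{x_2}^2 + k^2 + \frac{(U^\pm)''}{U^\pm-c}.
\]
Under the hypothesis of (2), namely $(U^\pm)''>0$ and $c\le a$, the potential of $L^\pm$ is nonnegative, so $L^\pm$ is positive definite under Dirichlet boundary conditions with positive Green's function. Since $v_2^\pm>0$ in the interior (as $U^\pm(0)-c>0$ and $\phi^\pm>0$), the right-hand side above is strictly negative, and the maximum principle yields $w^\pm<0$; thus $\int v_2^\pm w^\pm\,dx_2<0$ and
\[
\int v_2^\pm \partial_c v_2^\pm\,dx_2 = \int v_2^\pm w^\pm\,dx_2 - \frac{I^\pm}{U^\pm(0)-c} < 0
\]
(the second term is negative because $U^\pm(0)-c>0$ and $I^\pm>0$), giving $\partial_c I^\pm < 0$ and $\partial_{Kc}F<0$. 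Case (3) is entirely parallel: for $(U^\pm)''<0$ and $c\ge b$, the potential of $L^\pm$ is again nonnegative and $L^\pm w^\pm<0$ still forces $w^\pm<0$; but now $v_2^\pm<0$ and $U^\pm(0)-c<0$, so both contributions reverse sign, giving $\partial_{Kc}F>0$.

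\textbf{Main obstacle.} The essential difficulty is the inhomogeneous boundary value $\partial_c v_2^\pm(0)=-1$; the $w^\pm$-ansatz is crafted precisely to cancel it, and the fortunate feature is that the accompanying correction $-I^\pm/(U^\pm(0)-c)$ shares sign with $\int v_2^\pm w^\pm$ in both regimes, so no cancellation occurs. The degenerate boundary cases $c = a$ (resp.\ $c = b$), at which $U^\pm-c$ may vanish at an interval endpoint, follow from the strict inequality on the open set together with the continuity of $\partial_{Kc} F$ provided by Lemma~\ref{L:F-regularity}.
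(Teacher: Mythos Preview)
Your proof is correct and takes a genuinely different route from the paper's.

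For Part~(1), the paper works directly with the fundamental solutions $y^\pm$ and the quantities $Y^\pm=(y^\pm)'(0)/y^\pm(0)$: it derives $\partial_K Y^-(c,k)=y^-(0)^{-2}\int_{-h_-}^{0}(y^-)^2\,dx_2$ (and the analogue for $Y^+$), then proves that $x_2\mapsto \partial_K y^\pm(x_2)/y^\pm(x_2)$ is monotone, and feeds this monotonicity into explicit integral formulas for $\partial_{KK}Y^\pm$ to obtain the signs. Your variational identity $F=\epsilon J^+ + J^- - g(1-\epsilon)-\sigma K/\rho^-$ together with the energy estimate for $\partial_K\Psi^\pm$ is equivalent but more streamlined: it bypasses the monotonicity lemma and gives $\partial_{KK}J^\pm<0$ in one stroke.

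For Parts~(2) and~(3) the divergence is sharper. The paper differentiates its formula for $\partial_K Y^\pm$ in $c$ and again invokes the monotonicity of $\partial_K y^\pm/y^\pm$ to read off $\operatorname{sgn}(\partial_{Kc}Y^\pm)=\mp\operatorname{sgn}\bigl((U^\pm)''\bigr)$, then assembles these in the expression \eqref{E:partial-Kc-F}. Your $w^\pm$-ansatz instead homogenizes the boundary data of $\partial_c v_2^\pm$ and appeals to the maximum principle for $L^\pm$; this is a PDE argument that avoids the intermediate $Y^\pm$-formulas entirely. The paper's route has the advantage that the formulas for $\partial_K Y^\pm$ and $\partial_c Y^\pm$ are reused heavily later (Lemmas~\ref{L:c-large k}, \ref{L:no singular mode}, \ref{L:p-c-F-a}); your route is self-contained and arguably cleaner for this lemma in isolation. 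One minor point: your continuity appeal at $c=a$ a priori yields only $\partial_{Kc}F\le 0$, but strictness survives because the contribution $-I^\pm/(U^\pm(0)-c)$ is strictly negative there (the domain hypothesis excludes $c=U^\pm(0)$, so $U^\pm(0)-a>0$).
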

\begin{proof}
Let $K:=k^2$. We notice that for $c\in \mathbb{C}\setminus U^-([-h_-, 0])$, $\partial_K y^-$ satisfies the following equations.
\begin{subequations}\label{E:R-p-k-y}
\begin{equation}\label{E:R-partial-k-y-}
\begin{cases}
-\partial_K (y^-)''(x_2)+\big(k^2+\frac{(U^-)''}{U^--c}\big)\partial_K y^-(x_2)=-y^-(x_2), \qquad x_2\in (-h_-, 0),\\
    \partial_K y^-(- h_-)=\partial_K (y^-)'(- h_-)=0. 
\end{cases}    
\end{equation}
For $c\in \mathbb{C}\setminus U^+([0, h_+])$, $\partial_K y^+$ satisfies the following system.
\begin{equation}\label{E:R-p-k-y+}
    \begin{cases}
-\partial_K (y^+)''(x_2)+\big(k^2+\frac{(U^+)''}{U^+-c}\big)\partial_K y^+(x_2)=-y^+(x_2), \qquad x_2\in (0, h_+),\\
    \partial_K y^+( h_+)=\partial_K (y^+)'( h_+)=0. 
\end{cases} 
\end{equation}
\end{subequations}
We consider $c\in \mathbb{R}\setminus \big(U^-([-h_-, 0])\cup U^+([0, h_+])\big)$ and do $\eqref{E:E-problem-1}\partial_K y^\pm(x_2)-\eqref{E:R-p-k-y}y^\pm(x_2)$ respectively and obtain that
\begin{subequations}\label{E:partial-k}
\begin{equation}\label{E:partial-k-}
    -(y^-)''\partial_K y^-+\partial_K(y^-)'' y^-=(y^-)^2, \quad  x_2\in (-h_-, 0).
\end{equation} 
\begin{equation}\label{E:partial-k+}
    -(y^+)''\partial_K y^++\partial_K(y^+)'' y^-=(y^+)^2, \quad  x_2\in (0, h_+).
\end{equation} 
\end{subequations}
Using the boundary conditions in \eqref{E:R-p-k-y}, we integrate \eqref{E:partial-k-} on $[-h_-, 0]$ and integrate \eqref{E:partial-k+} on $[0, h_+]$, and then divide it by $y^\pm(0)^2$ respectively.
\begin{subequations}\label{E:partial-k-Y}
\begin{equation}\label{E:partial-k-Y-}
  \partial_K Y^-(c, k)=\frac{1}{y^-(0)^2}\int_{-h_-}^0 y^-(x_2)^2dx_2>0,  
\end{equation}
\begin{equation}\label{E:partial-k-Y+}
  \partial_K Y^+(c, k)=-
  \frac{1}{y^+(0)^2}\int_0^{h_+} y^+(x_2)^2dx_2<0. 
\end{equation}
\end{subequations}
Similarly, we integrate $\eqref{E:partial-k-}$ on $[-h_-, x_2]$ and \eqref{E:partial-k+} on $[x_2, h_+]$, and then divide it by $y^\pm(x_2)$ respectively. We obtain that
\begin{subequations}\label{E:d-p-x2-0}%not cite
\begin{equation}\label{E:d-p-x2-0-}
    \frac{d}{dx_2}\Big(\frac{\partial_K y^-(x_2)}{y^-(x_2)}\Big)=\frac{1}{y^-(x_2)^2}\int_{-h_-}^{x_2}y^-(x_2')^2dx_2' >0, \qquad x_2\in [-h_-, 0].
\end{equation}
\begin{equation}\label{E:d-p-x2-0+}
    \frac{d}{dx_2}\Big(\frac{\partial_K y^+(x_2)}{y^+(x_2)}\Big)=-\frac{1}{y^+(x_2)^2}\int_{x_2}^{h_+}y^+(x_2')^2dx_2' <0, \qquad x_2\in [0, h_+].
\end{equation}
\end{subequations}
These imply that 
\begin{subequations}\label{E:p-KK-Y}%not cite
\begin{equation}\label{E:p-KK-Y-}
    \partial_{KK}Y^-(c, k)=-2\int_{-h_-}^0 \frac{y^-(x_2)^2}{y^-(0)^2}\Big(\frac{\partial_Ky^-(0)}{y^-(0)}-\frac{\partial_Ky^-(x_2)}{y^-(x_2)} \Big)dx_2<0. 
\end{equation}
\begin{equation}\label{E:p-KK-Y+}
    \partial_{KK}Y^+(c, k)=2\int_0^{h_+} \frac{y^+(x_2)^2}{y^+(0)^2}\Big(\frac{\partial_Ky^+(0)}{y^+(0)}-\frac{\partial_Ky^+(x_2)}{y^+(x_2)} \Big)dx_2>0. 
\end{equation}
\end{subequations}
Therefore, for $c\in \mathbb{R}\setminus \Big( U^-\big((-h_-, 0])\cup U^+([0, h_+)\big)\Big)$, 
\begin{align*}
  \partial_{KK}F(c, k, \epsilon)=-\epsilon\partial_{KK} Y^+(c, k)\big(U^+(0)-c\big)^2+\partial_{KK} Y^-(c, k)\big(U^-(0)-c\big)^2<0. 
\end{align*}

Suppose that $c\in \mathbb{R}\setminus \Big( U^-\big((-h_-, 0])\cup U^+([0, h_+)\big)\Big)$.  We differentiate \eqref{E:partial-c-Y+} in $K$ to obtain that
\begin{align*}
    \partial_{Kc} Y^+(c, k)=\frac{2}{y^+(0)^2}\int_0^{h_+}\frac{(U^+)''}{(U^+-c)^2}(y^+)^2\big(\frac{\partial_K y^+(0)}{y^+(0)}-\frac{\partial_K y^+(x_2)}{y^+(x_2}\big)dx_2.
\end{align*}
Using \eqref{E:d-p-x2-0+}, we conclude that 
\begin{subequations}\label{E:sgn-pKc-Y}
\begin{equation}\label{E:sgn-pKcY+}
    sgn(\partial_{Kc} Y^+)=sgn\big((U^+)''\big).
\end{equation}
A similar computation leads to
\begin{equation}\label{E:sgn-pKcY-}
    sgn(\partial_{Kc} Y^-)=-sgn\big((U^-)''\big).
\end{equation}
\end{subequations}
We consider the following function. \begin{equation}\label{E:partial-Kc-F}
  \partial_{Kc} F=\partial_{Kc}Y^-\big(U^-(0)-c\big)^2-2\big(U^-(0)-c\big)\partial_KY^-+\epsilon\Big(-\partial_{Kc}Y^+\big(U^+(0)-c \big)^2+2\big(U^+(0)-c \big)\partial_KY^+ \Big). 
\end{equation}
Suppose that $c\leq a$,  $(U^+)''>0$, and $(U^-)''>0$. Then, based on \eqref{E:partial-Kc-F}, we use \eqref{E:partial-k-Y} and \eqref{E:sgn-pKc-Y} to obtain that $\partial_{Kc}F(c, k, \epsilon)<0$. Lemma \ref{L:cite-yY-}, Lemma \ref{L:yY+}, and Lemma \ref{L:F-regularity} imply that $\partial_{Kc}F$ is well defined at $c=a$ and then $\partial_{Kc}F(a, k, \epsilon)<0$.  A similar proof of statement (3) for the case of $c\geq b$ and both $(U^+)''<0$ and $(U^-)''<0$ is similar.  
\end{proof}

As $|k|$ gets smaller from infinity, there might exist singular modes.  Lemma \ref{L:p-KK-F} implies that we can use the monotonicity of $\partial_K F(c, k, \epsilon)$ and $\partial_c F(c, k, \epsilon)$  in $K$ and reduce some computations to the case of $k=0$.  Hence, it is worth paying closer attention to the special case of $k=0$. When $k=0$, $y^\pm(x_2)$ which are solutions to \eqref{E:E-problem-1} and \eqref{E:fundamental solution} have explicit representations. A direction computation shows that if $c=U^+(h_+)$, 
\begin{equation}\label{E:y+Uh+}
 y^+(x_2)=\frac{U^+(x_2)-U^+(h_+)}{(U^+)'(h_+)}.   
\end{equation}
If $c\in \mathbb{R}\setminus U^+\big([0, h_+]\big)$,
\begin{equation}\label{E:y+out}
    y^+(x_2)=\big(U^+(h_+)-c\big)\big(U^+(x_2)-c\big)\int_{x_2}^{h_+}\frac{-1}{\big(U^+(x_2')-c \big)^2}dx_2'.
\end{equation}
If $c=U^-(-h_-)$,
\begin{equation}\label{E:U-h-y-}
y^-(x_2)=\frac{U^-(x_2)-U^-(-h_-)}{(U^-)'(-h_-)}.
\end{equation}
If $c\in \mathbb{R}\setminus U^-([-h_-, 0])$, 
\begin{equation}\label{E: y-c out}
    y^-(x_2)=\big(U^-(x_2)-c \big)\big(U^-(-h_-)-c \big)\int_{-h_-}^{x_2}\frac{1}{(U^--c)^2}dx_2. 
\end{equation}

%By Lemma 4.3 in \cite{LZ}, when $\epsilon=0$, $c^+(k)$ obtained in Lemma \ref{L:c-large k} can be extended to all $k\in \mathbb{R}$ and $c^+(k)>U^-(0)$. We will use the following lemma from \cite{LZ}.
%\begin{lemma}{Lemma 4.7 \cite{LZ}}\label{L:cite-4.7}Assume $U^-\in C^3$ and $\epsilon=0$. then $\partial_c F(c, c^+(k), 0)>0$ and $(c^+)'(k)>0$ for all $k\in \mathbb{R}$. \end{lemma}

In the next lemma, we address the possible locations of neutral limiting modes for given $\epsilon\in (0, 1)$ and small $\epsilon$. 
\begin{lemma}\label{L:nlm-location-fixed-ep}
Assume that $U^\pm$ satisfy \eqref{E:U monotone}, and  $(U^\pm)''\neq 0$. Let 
\begin{equation}\label{E:EI-definition}
    \mathcal{E}:=\{U^+(h_+), U^+(0), U^-(-h_-), U^-(0)\}, \quad \mathcal{I}:=U^+\big((0, h_+)\big)\cap U^-\big((-h_-, 0)\big),
\end{equation}
Then the following holds. 
\begin{enumerate}
\item Suppose that $\epsilon\in (0, 1)$ and there is a sequence of unstable modes $(c_n, k_n)$ converges to $(c_\infty, k_\infty)\in \mathbb{R}^2$ as $n\rightarrow\infty$, and
\begin{equation}\label{E:ep-k-infty>0}
    g(1-\epsilon)+\frac{\sigma}{\rho^-}k_\infty^2>0.
\end{equation}
Then the following holds.
\begin{enumerate}
    \item If $\mathcal{I}=\emptyset$, then $c_\infty\in [a, b]\setminus \Big(U^-\big((-h_-, 0)\big)\cup U^+\big((0, h_+) \big)\Big)$. Under additional assumption that $U^+(0)=U^-(0)$,  $c_\infty\in \mathcal{E}\setminus \{U^\pm(0)\}$.  
    \item Assume $\mathcal{I}\neq \emptyset$. Then $c_\infty\in \mathcal{E}\cup \mathcal{I}$ if  $U^+(0)\neq U^-(0)$. And if  $U^+(0)= U^-(0)$, $c_\infty\in \mathcal{E}\cup \mathcal{I}\setminus \{U^\pm(0)\}$. In addition, if $(U^+)'' (U^-)''>0$, then $c_\infty\in \{a, b\}$, where $a, b$ are defined in \eqref{E:ab}.
\end{enumerate}
\item Suppose that $\mathcal{I}= \emptyset$. There exists $\epsilon_0>0$ such that for any $\epsilon\in (0, \epsilon_0)$, if an unstable sequence $(c_n, k_n)\rightarrow (c_\infty, k_\infty)\in \mathbb{R}^2$ as $n\rightarrow \infty$, \eqref{E:ep-k-infty>0} holds,  and we further assume one of the following holds, 
\begin{enumerate}
    \item $(U^-)', (U^-)''>0$, and $\max_{[0, h_+]} U^+(x_2)<U^-(-h_-)$; or
    \item $(U^-)', (U^-)''>0$, and $\min_{[0, h_+]}U^+(x_2)>U^-(0)$,
\end{enumerate}
then $c_\infty\in \mathcal{E}$. 
\end{enumerate}
\end{lemma}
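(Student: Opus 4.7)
The plan is to pass to the limit $n \to \infty$ in $F(c_n, k_n, \epsilon) = 0$ and extract geometric constraints on $c_\infty$ from the imaginary part of $F$. By Lemma~\ref{L:F-regularity} we have $F(c_\infty, k_\infty, \epsilon) = 0$; Theorem~\ref{T:semicircle} applied along the sequence together with \eqref{E:ep-k-infty>0} forces $c_\infty \in [a, b] \cap \mathbb{R}$. Since $c_\infty$ is real, $U^\pm(0) - c_\infty$ is real and
\begin{align*}
  \operatorname{Im} F(c_\infty, k_\infty, \epsilon) = -\epsilon\, Y_I^+(c_\infty, k_\infty)\bigl(U^+(0) - c_\infty\bigr)^2 + Y_I^-(c_\infty, k_\infty)\bigl(U^-(0) - c_\infty\bigr)^2 = 0.
\end{align*}
By \eqref{E:YI-}, \eqref{E:YI+} together with $(U^\pm)'' \neq 0$, $Y_I^-(c_\infty, k_\infty)$ is nonzero precisely when $c_\infty \in U^-((-h_-, 0))$ with $\operatorname{sgn} Y_I^- = \operatorname{sgn} (U^-)''$, and analogously for $Y_I^+$ with $\operatorname{sgn} Y_I^+ = -\operatorname{sgn} (U^+)''$.

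For part (1)(a), with $\mathcal{I} = \emptyset$ at most one of the two open ranges contains $c_\infty$. If $c_\infty \in U^-((-h_-, 0))$ then $Y_I^+(c_\infty, k_\infty) = 0$ and vanishing of $\operatorname{Im} F$ forces $c_\infty = U^-(0)$, contradicting strict monotonicity of $U^-$; the symmetric case is identical, yielding $c_\infty \in [a, b]\setminus\bigl(U^-((-h_-, 0))\cup U^+((0, h_+))\bigr)$. Under the supplementary hypothesis $U^+(0) = U^-(0)$, if $c_\infty = U^\pm(0)$ then the logarithmic bounds \eqref{E:Y--log}, \eqref{E:Y-+log} give $Y^\pm(c)\bigl(U^\pm(0) - c\bigr)^2 \to 0$ as $c \to U^\pm(0)$, so the limit equation collapses to $F(c_\infty, k_\infty, \epsilon) = -\bigl(g(1 - \epsilon) + \sigma k_\infty^2/\rho^-\bigr)$, which is nonzero by \eqref{E:ep-k-infty>0}, a contradiction; hence $c_\infty \in \mathcal{E} \setminus \{U^\pm(0)\}$.

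Part (1)(b) uses the same imaginary-part equation. When $\mathcal{I} \neq \emptyset$, a point $c_\infty \in \mathcal{I}$ makes both $Y_I^\pm$ nonzero and the equation can balance in principle, so the unconditional conclusion is $c_\infty \in \mathcal{E} \cup \mathcal{I}$ (with $U^\pm(0)$ removed by log-cancellation when they coincide, exactly as in (1)(a)). Under the additional hypothesis $(U^+)''(U^-)'' > 0$, however, $-\epsilon Y_I^+$ and $Y_I^-$ share the same sign, so the two nonnegative squared terms cannot cancel unless both factors $(U^\pm(0) - c_\infty)$ vanish, which monotonicity rules out; thus $c_\infty \notin \mathcal{I}$ and so $c_\infty \in \mathcal{E}$. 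The further restriction $c_\infty \in \{a, b\}$ follows by noting that any element of $\mathcal{E}\setminus\{a, b\}$ is an endpoint of one range lying in the open interior of the other, at which $Y_I^\pm$ of the second range is nonzero while only one squared factor vanishes, so $\operatorname{Im} F \neq 0$.

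For part (2), the conclusion of (1)(a) restricts $c_\infty$ to $\mathcal{E} \cup G$ where $G := [a, b]\setminus\bigl(U^-([-h_-, 0]) \cup U^+([0, h_+])\bigr)$ is the gap, nonempty under (2a) or (2b); the remaining task is to exclude $c_\infty \in G$ for small $\epsilon$. On a neighborhood of $\overline{G} \times \mathbb{R}$ the function $F(\cdot, \cdot, \epsilon)$ is jointly real-analytic, and since $(c_n, k_n)$ consists of complex zeros with $c_{n, I} > 0$ accumulating at the real point $(c_\infty, k_\infty)$, the analytic implicit function theorem in the complex variable $c$ forces $\partial_c F(c_\infty, k_\infty, \epsilon) = 0$; otherwise the unique local solution branch would be real-analytic in $k$, hence real-valued, contradicting $c_{n, I} > 0$. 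The key quantitative step is then to show $\partial_c F(c, k, 0) \neq 0$ uniformly for $(c, k) \in \overline G \times \mathbb{R}$; by the smoothness in $\epsilon$ of Lemma~\ref{L:F-regularity} the same inequality persists for small $\epsilon > 0$, completing the contradiction. The main obstacle is precisely this uniform nonvanishing: the plan is to combine the $K$-concavity of Lemma~\ref{L:p-KK-F} with the large-$|k|$ asymptotics of Lemma~\ref{L:c-large k} to reduce the estimate to $k = 0$, where the explicit Sturm--Liouville formulas \eqref{E:y+out}--\eqref{E: y-c out} together with the sign computation \eqref{E:partial-c-Y} deliver the required definite sign under hypotheses (2a)--(2b) on the disposition of the two ranges.
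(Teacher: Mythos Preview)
Your treatment of part (1) is correct and essentially follows the paper's argument via the imaginary part of $F$; your justification that $c_\infty\in\{a,b\}$ under $(U^+)''(U^-)''>0$ in (1)(b) is in fact more explicit than what the paper writes out.

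For part (2) your overall strategy---show $\partial_c F(c_\infty,k_\infty,\epsilon)\ne0$ and then use the analytic implicit function theorem to produce a real branch of roots, contradicting $c_{n,I}>0$---is exactly the paper's. In case (2a) your plan also matches: the gap lies below $U^-(-h_-)$, so $U^-(0)-c>0$, and the two terms of
\[
\partial_{Kc}F(c,k,0)=(U^-(0)-c)^2\,\partial_{Kc}Y^- \;-\; 2(U^-(0)-c)\,\partial_K Y^-
\]
are both negative by \eqref{E:sgn-pKc-Y} and \eqref{E:partial-k-Y-}. Reducing to $k=0$ and invoking the explicit formulas \eqref{E:U-h-y-}--\eqref{E: y-c out} then yields $\partial_c F(c,0,0)<0$ unconditionally on that gap, as in the paper.

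In case (2b), however, your plan has a genuine gap. Here $G=(U^-(0),\min U^+)$ lies \emph{above} $U^-(0)$, so $U^-(0)-c<0$ and the two terms in $\partial_{Kc}F(c,k,0)$ have opposite signs; Lemma~\ref{L:p-KK-F} (whose $\partial_{Kc}F$ statements apply only for $c\le a$ or $c\ge b$) gives no monotonicity of $\partial_cF$ in $K$ on $G$, and the reduction to $k=0$ does not go through. Your stronger unconditional claim ``$\partial_c F(c,k,0)\ne0$ on $\overline G\times\mathbb{R}$'' is also more than is needed and not obvious from the explicit formulas alone. The paper instead uses the constraint $F(c_\infty,k_\infty,\epsilon)=0$ that you already established: solving it for $Y^-$ and substituting into \eqref{E:partial-c-F} replaces the uncontrolled term $-2(U^-(0)-c_\infty)Y^-$ by $-2\bigl(g+\tfrac{\sigma}{\rho^-}k_\infty^2\bigr)/(U^-(0)-c_\infty)$, which is strictly positive since $c_\infty>U^-(0)$; combined with $(U^-(0)-c_\infty)^2\partial_cY^->0$ (from \eqref{E:partial-c-Y-} and $(U^-)''>0$) this gives $\partial_cF>0$ for small $\epsilon$ directly, with no reduction to $k=0$.
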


\begin{proof}
Suppose that $\epsilon\in (0, 1)$ is given. Lemma \ref{L:F-regularity} implies that
\begin{align*}
    F(c_\infty, k_\infty, \epsilon)=\lim_{n\rightarrow \infty}F(c_n, k_n, \epsilon)=0.
\end{align*}
To address the possible locations of the neutral limiting mode for each $\epsilon>0$, we need to consider the imaginary part of $F(c_\infty, k_\infty, \epsilon)$ . 
\begin{equation}\label{E:FI}
    F_I(c_\infty, k_\infty, \epsilon)=-\epsilon Y^+_I(c_\infty, k_\infty)\big(U^+(0)-c_\infty\big)^2+Y^-_I(c_\infty, k_\infty)\big(U^-(0)-c_\infty\big)^2. 
\end{equation}
Since \eqref{E:ep-k-infty>0} holds, Theorem \ref{T:semicircle} implies that $c_\infty\in [a, b]$, where $a$ and $b$ are defined in \eqref{E:ab}. We shall discuss the cases as following. 

\noindent * \textit{Case (1a). $\mathcal{I}=\emptyset$.}  Since $(U^\pm)''\neq 0$, according to Lemma \ref{L:cite-yY-} and Lemma \ref{L:yY+}, $Y^-_I\neq 0$ if $c\in U^-\big((-h_-, 0] \big)$. And $Y_I^-\equiv 0$ if $c\in \mathbb{R}\setminus U^-\big((-h_-, 0]\big)$, $Y^+_I\neq 0$ if $c\in U^+\big([0, h_+) \big)$, and $Y^+_I\equiv 0$ if $c\in \mathbb{R}\setminus U^+\big([0, h_+)\big)$. Hence,  $F_I(c_\infty, k_\infty, \epsilon)\neq 0$ for $c_\infty\in U^-\big((-h_-, 0)\big)\cup U^+\big((0, h_+)\big)$. Particularly, if we further assume that $U^+(0)=U^-(0)$, then \eqref{E:Y-estimate} and \eqref{E:Y+estimate} imply that  $F(c_\infty, k, \epsilon)=-g(1-\epsilon)-\frac{\sigma}{\rho^-}k^2< 0$ for all $k\in \mathbb{R}$ and $\epsilon\in (0, 1)$. Hence, in this case, $c\notin U^\pm(0)$.

\noindent * \textit{Case (1b). $\mathcal{I}\neq \emptyset$.} It is clear that $c_\infty \in  \mathcal{E}\cup \mathcal{I}$ by considering $F_I(c_\infty, k_\infty, \epsilon)=0$. We know that $c_\infty\neq U^\pm(0)$ if $U^+(0)=U^-(0)$. Let $x_{c_\infty}^-:=(U^-)^{-1}(c_\infty)$ and $x_{c_\infty}^+:=(U^+)^{-1}(c_\infty)$. Based on  \eqref{E:YI-} and \eqref{E:YI+}, if $c_\infty\in \mathcal{I}$,  \eqref{E:FI} can be rewritten as 
\begin{align*}
    F_I(c_\infty, k_\infty, \epsilon)=\epsilon\frac{\pi (U^+)''(x_{c_\infty}^+)y^+(x_{c_\infty})^2}{|(U^+)'(x_{c_\infty}^+)||y^+(0)|^2}\big(U^+(0)-c_\infty\big)^2+\frac{\pi (U^-)''(x_{c_\infty}^-)y^-(x_{c_\infty})^2}{|(U^-)'(x_{c_\infty}^-)||y^-(0)|^2}\big(U^-(0)-c_\infty\big)^2.
\end{align*}
Hence, if $sgn\big((U^-)''\big)=sgn\big((U^+)''\big)$, $F_I(c_\infty, k_\infty, \epsilon)\neq 0$. Statement (1) is proved. 

2. Now we consider $\mathcal{I}=\emptyset$ and will prove $c_\infty\in \mathcal{E}$ under the assumption of (2a) or (2b). 
To prove (2), we will show that in the following cases, there exists $\epsilon_0>0$ such that for any $\epsilon\in (0, \epsilon_0)$, if $F(c, k, \epsilon)=0$ then $\partial_c F(c, k, \epsilon)\neq 0$ under some conditions. Let $K:=k^2$ and $K_\infty=k_\infty^2$. \\
\textit{Case (a).} $(U^-)'>0,  (U^-)''>0$, and $\max_{[0, h_+]} U^+(x_2)<U^-(-h_-)$. Assume that \begin{equation}\label{E:case-a}
    c\in \big(\max_{[0, h_+]} U^+(x_2), U^-(-h_-) \big).
\end{equation}
Since $(U^-)''>0$, \eqref{E:sgn-pKc-Y} implies that $\partial_{Kc} Y^-<0$. By \eqref{E:partial-k-Y-}, $\partial_K Y^->0$. Hence, 
\begin{align*}
    \partial_{Kc} F(c, k, 0)=\big((U^-(0)-c)^2\big)\partial_{kc} Y^-(c, k)-2\big(U^-(0)-c\big)\partial_K Y^-(c, k)<0. 
\end{align*}
This implies that $\partial_c F(c, k, 0)<\partial_c F(c, 0, 0)$ for all $k>0$. If $c=U^-(-h_-)$, $y^-(x_2)$ is in the form of \eqref{E:U-h-y-}. Using \eqref{E:partial-c-Y-}, we compute that 
\begin{equation}\label{E:-h-pcY-}
    \big(U^-(0)-U^-(-h_-)\big)^2\partial_c Y^-\big(U^-(-h_-), 0\big)=\int_{-h_-}^0 (U^-)'' dx_2=(U^-)'(0)-(U^-)'(-h_-),
\end{equation} 
\begin{equation}\label{E:Y-h-k0}
    Y^-\big(U^-(-h_-), 0\big)=\frac{(U^-)'(0)}{U^-(0)-U^-(-h_-)}. 
\end{equation}
Hence, we obtain that 
\begin{equation}\label{E:p-c-F-U-h-}
    \partial_c F\big(U^-(-h_-), 0, 0 \big)=-(U^-)'(-h_-)<0.  
\end{equation}
Similarly, if $c\in \mathbb{R}\setminus U^-([-h_-, 0])$,  $y^-(x_2)$ is in the form of  \eqref{E: y-c out}. Then we compute 
\begin{equation}\label{E:p-c-1-0outU-h-}
\begin{split}
    \big(U^-(0)-c\big)^2 \partial_c Y^-(c, 0)=&\frac{1}{\big( \int_{-h_-}^0\frac{1}{(U^--c)^2}dx_2\big)^2}\int_{-h_-}^0 (U^-)'' \big(\int_{-h_-}^{x_2}\frac{1}{(U^--c)^2}dx_2'\big)^2dx_2\\
    \leq & \int_{-h_-}^0 (U^-)''dx_2 =(U^-)'(0)-(U^-)'(-h_-),
    \end{split}
\end{equation}
\begin{equation}\label{E:Y-k-0outU-}
    Y^-(c, 0)= \frac{(U^-)'(0)}{U^-(0)-c}+\frac{1}{\big(U^-(0)-c\big)^2\int_{-h_-}^0 \frac{1}{(U^--c)^2}dx_2}. 
\end{equation}
Then, we obtain that 
\begin{equation}\label{E:pc-F-0outU-}
    \partial_c F(c, 0, 0)\leq -(U^-)'(-h_-)-\frac{2}{\big(U^-(0)-c\big)\int_{-h_-}^0 \frac{1}{(U^--c)^2}dx_2}<0. 
\end{equation}
Hence, $\partial_c F(c, 0, 0)<0$ if $c\leq U^-(h_-)$. 
By \eqref{E:p-KK-F}, there exists $\epsilon_0>0$  such that $\partial_c F(c, k, \epsilon)=\partial_c F(c, k, 0)+O(\epsilon)< \partial_c F(c, 0, 0)+O(\epsilon)<0$ for any $\epsilon\in (0, \epsilon_0)$ and $c$ satisfies \eqref{E:case-a}. In fact, this result does not require that $F(c, k, \epsilon)=0$.  \\
\textit{Case (b).} $(U^-)'>0$, $(U^-)''>0$, and $U^-(0)<\min_{[0, h_+]}U^+(x_2)$. Assume that 
\begin{equation}\label{E:case-b}
    c\in \big(U^-(0), \min_{[0, h_+]}U^+(x_2)\big). 
\end{equation}
Since $F(c, k, \epsilon)=0$, by the smoothness of $F$ in $\epsilon$, we obtain that 
\begin{align*}
    F(c, k, \epsilon)=F(c, k, 0)+O(\epsilon). 
\end{align*}
Hence, a direction computation leads to 
\begin{align*}
    Y^-(c, k)=\frac{g+\frac{\sigma}{\rho^-}k^2+O(\epsilon)}{\big(U^-(0)-c\big)^2}+\frac{(U^-)'(0)}{U^-(0)-c}.
\end{align*}
Then we compute that 
\begin{align*}
    \partial_c F(c, k, \epsilon)=\big(U^-(0)-c\big)^2\partial_c Y^-(c, k)+(U^-)'(0)-2\frac{g+\frac{\sigma}{\rho^-}k^2+O(\epsilon)}{U^-(0)-c}+O(\epsilon).
\end{align*}
Using the assumption that $(U^-)'>0$, the fact that $\partial_c Y^->0$ if $(U^-)''>0$ (see \eqref{E:partial-c-Y-}), and the smallness of $\epsilon$, we have $\partial_c F(c, k, \epsilon)>0$ in this case. 

We let \begin{align*}
    \mathcal{N}:=[a, b]\setminus \Big(U^-\big([-h_-, 0] \big)\cup U^+\big([0, h_+] \big)\Big). 
\end{align*} 
By the proof in (1), to prove (2), it remains to consider $c_\infty\in \mathcal{N}$. We suppose that for some $\epsilon\in (0, \epsilon_0)$, there is a unstable sequence $(c_n, k_n)\rightarrow (c_\infty, k_\infty)\in \mathbb{R}^2$.  
$\epsilon_0$ is chosen by the previous discussion. Lemma \ref{L:c-large k} implies that $k_\infty$ can not be sufficiently large. Hence, $\epsilon_0$ can be chosen independent of $k_\infty$ by the smoothness of $F$ in $(c, k, \epsilon)\in \mathcal{N}\times \mathbb{R}\times (0, 1)$. Since $F(c, k, \epsilon)$ is analytic near $c_\infty\in \mathcal{N}$, we use the Cauchy-Riemann equation to compute the $2\times 2$ Jacobian matrix of $D_c F$. 
\begin{align*}
    D_c F\big(c_\infty, k_\infty, \epsilon \big)=\begin{pmatrix}
    \partial_{c_R} F_R & \partial_{c_I} F_R\\ \partial_{c_R} F_I & \partial_{c_I} F_I
    \end{pmatrix}\Big| _{(c_\infty, k_\infty, \epsilon)}= \partial_c F(c_\infty, k_\infty, \epsilon) I_{2\times 2}. 
\end{align*}
For both case (a) and (b), $\partial_c F(c_\infty, k_\infty, \epsilon)\neq 0$. Hence, by the Implicit Function Theorem, there exists a smooth complex-valued function $\mathcal{C}(k)$ such that all the roots of $F(\cdot, \cdot, \epsilon)$ near $(c_\infty, k_\infty)$ are in the form of $\big(\mathcal{C}(k), k\big)$ and $\mathcal{C}(k_\infty)=c_\infty$. We will show that $\mathcal{C}(k)\in \mathbb{R}$ to complete the proof. Since $F_R$ is smooth for $c\in \mathcal{N}$ and $\partial_{c_R} F_R(c_\infty, k_\infty, \epsilon)=\partial_c F(c_\infty, k_\infty, \epsilon)\neq 0$, we apply the Implicit Function Theorem on $F_R$. Then there exists a smooth real-valued function $\mathcal{C}_1(k)$ for $k$ near $k_\infty$ such that $\mathcal{C}_1(k_\infty)=c_\infty$ and $F_R\big(\mathcal{C}_1(k), k, \epsilon\big)=0$. Since $c_\infty\in \mathcal{N}$, $F_I(c, k, \epsilon)=0$ near $c_\infty$. Hence, by the uniqueness of solutions obtained by the Implicit Function Theorem, $\mathcal{C}(k)=\mathcal{C}_1(k)\in \mathbb{R}$ near $c_\infty$. Hence, there is no unstable mode near $(c_\infty, k_\infty)$. The lemma is proved. 
\end{proof}

According to Lemma \ref{L:nlm-location-fixed-ep}, for each $\epsilon\in (0, 1)$, neutral limiting modes might happen at the endpoints of the range of $U^\pm$ or the intersection of the range of $U^\pm$. In either case, $(c, k)$ is a singular mode. By Remark \ref{R:c-pm-extend}, we notice that the number of eigenvalues $\lambda=-ikc$ may be changed if $c^\pm(k)$ obtained in Lemma \ref{L:c-large k} touches the range of $U^\pm$ as $|k|$ decreases. On the other hand, if we can rule out the existence of any singular modes, then $c^\pm(k)$ can be extended to all $k\in \mathbb{R}$. We would like to study how the eigenvalue distributes when $|k|$ gets smaller. The rest of this section includes two main directions. One is to prove the stability of shear flows by ruling out all singular modes under some certain conditions. The other one is to seek instability occurring near singular modes for general $\epsilon\geq 0$. In the following lemma, we provide some sufficient conditions for the non-existence of singular modes.

\begin{lemma}\label{L:no singular mode}
Assume that $U^\pm$ satisfies \eqref{E:U monotone}, $(U^\pm)''\neq 0$, and $\epsilon\in (0, 1)$. Let $a, b$ be defined in \eqref{E:ab},  $\mathcal{E}, \mathcal{I}$ be defined in \eqref{E:EI-definition}, and
\begin{equation}\label{D:m(c)}
m(c):= \epsilon\int_0^{h_+}\big(U^+(x_2)-c\big)^2dx_2+\int_{-h_-}^0\big(U^-(x_2)-c\big)^2dx_2.
\end{equation}
If 
\begin{equation}\label{E:A-F<0}
    \min_{c\in\{U^-(0), U^-(-h_-)\}} \int_{-h_-}^0 \frac{1}{(U^--c)^2}dx_2>\frac{1}{g}, \quad \min_{c\in\{U^+(0), U^+(h_+)\}}\int_0^{h_+}\frac{1}{(U^+-c)^2}dx_2>\frac{\epsilon}{g},
\end{equation}
and one of the following holds,
\begin{enumerate}
    \item $\mathcal{I}=\emptyset$ and $\max_{c\in \mathcal{E}} m(c)<\frac{\sigma}{\rho^-}$;
    \item $\mathcal{I}\neq \emptyset$, $\max_{c\in \{a, b\}}m(c)<\frac{\sigma}{\rho^-}$, and $(U^+)''(U^-)''>0$,
\end{enumerate}
then for each $k>0$, $c^\pm(k)$ obtained in Lemma \ref{L:c-large k} can be extended to be even and analytic functions for all $k\in \mathbb{R}$ and the following hold 
\begin{equation}\label{E:two branches-no singular mode}
    F(c^\pm(k), k, \epsilon)=0, \quad c^+(k)>b, \quad c^-(k)<a, \quad k\partial_c F(k, c^\pm(k))>0. 
\end{equation}
\end{lemma}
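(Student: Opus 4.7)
The plan is to promote the local continuation of $c^\pm(k)$ from Remark~3.3 to a global statement by eliminating every singular mode at the candidate locations supplied by Lemma~3.4. Lemma~3.2 and Remark~3.3 already extend $c^\pm(k)$ as even analytic functions for $|k|\geq k_\pm$, with $c^+(k)>b$, $c^-(k)<a$, and $\pm\partial_c F(c^\pm(k),k,\epsilon)>0$; the extension becomes global in $k$ as soon as one verifies $F(c,k,\epsilon)\neq 0$ for every $k\in\mathbb{R}$ and every $c$ in the relevant candidate set. Under hypothesis (1) with $\mathcal{I}=\emptyset$, Lemma~3.4(1a) reduces this set to $\mathcal{E}$; under hypothesis (2), the sign condition $(U^+)''(U^-)''>0$ combined with Lemma~3.4(1b) reduces it to $\{a,b\}$.

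The main tool is a Reynolds-type representation of $F$ combined with the concavity of Lemma~3.3. Setting $K:=k^2$ and $\phi^\pm:=y^\pm/(U^\pm-c)$ (extended at range endpoints through \eqref{E:y+Uh+}--\eqref{E: y-c out}), $\phi^\pm$ solves $((U^\pm-c)^2(\phi^\pm)')'=K(U^\pm-c)^2\phi^\pm$ with $\phi^\pm(\pm h_\pm)=0$, and integration by parts against $\phi^\pm$ produces
\begin{equation*}
F(c,K,\epsilon)=\frac{\epsilon}{\phi^+(0)^2}\int_0^{h_+}(U^+{-}c)^2\bigl[(\phi^+{}')^2+K(\phi^+)^2\bigr]dx_2+\frac{1}{\phi^-(0)^2}\int_{-h_-}^0(U^-{-}c)^2\bigl[(\phi^-{}')^2+K(\phi^-)^2\bigr]dx_2-g(1{-}\epsilon)-\frac{\sigma K}{\rho^-}.
\end{equation*}
At $K=0$ the ODE integrates to $(U^\pm-c)^2(\phi^\pm)'\equiv\mathrm{const}$, forcing $\phi^\pm$ monotone with $|\phi^\pm(x_2)|\leq|\phi^\pm(0)|$. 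Combined with the formulas \eqref{E:partial-k-Y-}--\eqref{E:partial-k-Y+} for $\partial_K Y^\pm$, this yields the pointwise estimate $\partial_K F(c,0,\epsilon)\leq m(c)-\sigma/\rho^-<0$ under the surface-tension hypothesis $m(c)<\sigma/\rho^-$. Together with $\partial_{KK}F<0$ from Lemma~3.3, one gets $\partial_K F(c,K,\epsilon)<0$ for all $K\geq 0$, so $K\mapsto F(c,K,\epsilon)$ is strictly decreasing and the task reduces to checking the scalar inequality $F(c,0,\epsilon)\leq 0$ at each candidate $c$.

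For this $K=0$ inequality, the explicit solutions \eqref{E:y+Uh+}--\eqref{E: y-c out} give
\begin{equation*}
F(c,0,\epsilon)=\frac{\epsilon}{\int_0^{h_+}(U^+-c)^{-2}dx_2}+\frac{1}{\int_{-h_-}^0(U^--c)^{-2}dx_2}-g(1-\epsilon),
\end{equation*}
with each fraction read as zero when its denominator diverges, which is precisely the case when $c$ lies on the corresponding $U^\pm$ range. Condition \eqref{E:A-F<0}, combined with the monotonicity of $c\mapsto\int_{-h_-}^0(U^--c)^{-2}dx_2$ and $c\mapsto\int_0^{h_+}(U^+-c)^{-2}dx_2$ on each exterior component of the respective range, is used to dominate the surviving term by $g(1-\epsilon)$, delivering $F(c,0,\epsilon)\leq 0$ at every $c\in\mathcal{E}$ in case (1) or every $c\in\{a,b\}$ in case (2). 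This eliminates all singular candidates, so $c^\pm(k)$ extend globally in $k$ as even analytic functions with $c^+(k)>b$ and $c^-(k)<a$, and the sign conditions $\pm\partial_c F(c^\pm(k),k,\epsilon)>0$ propagate from large $|k|$ by continuity of simple real roots. The main obstacle I anticipate is the boundary-case analysis of $F(c,0,\epsilon)$: the explicit formulas degenerate at range endpoints, producing vanishing integrals that must be tracked individually (especially in degenerate coincidences such as $U^+(0)=U^-(0)$ or when $a$ or $b$ is simultaneously an endpoint of both ranges), and in case (2) the sign hypothesis $(U^+)''(U^-)''>0$ must be used to collapse the candidate set from $\mathcal{E}\cup\mathcal{I}$ down to $\{a,b\}$ via Lemma~3.4(1b).
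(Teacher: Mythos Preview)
Your core strategy matches the paper's: both establish $F(c,k,\epsilon)<0$ for all $k$ at each candidate value of $c$ by computing explicitly at $K=k^2=0$ and then using the concavity $\partial_{KK}F<0$ from Lemma~\ref{L:p-KK-F} to propagate to all $K>0$. Your Reynolds substitution $\phi^\pm=y^\pm/(U^\pm-c)$ is a clean way to package what the paper does by case-by-case computation with the explicit formulas \eqref{E:y+Uh+}--\eqref{E: y-c out}; the resulting bounds $\partial_K F(c,0,\epsilon)\le m(c)-\sigma/\rho^-$ and the closed expression for $F(c,0,\epsilon)$ are the same ones the paper obtains.

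The one substantive difference is downstream of this computation. The paper does not stop after showing $F(c,k,\epsilon)\neq 0$ at the endpoint set: it combines this with $F_I\neq 0$ on the open ranges to get $F\neq 0$ on neighborhoods of the full intervals $U^-([-h_-,0])$ and $U^+([0,h_+])$, and then runs a winding-number count $n(k)=\int_{\partial\Omega}\partial_cF/F\,dc$ over a carefully chosen region $\Omega$ to show $n(k)\equiv 2$ for all $k$. This root-counting step is what yields the stronger statement (used in Theorem~\ref{T:two branches}(3)) that $c^\pm(k)$ are the \emph{only} modes, and it is also why the paper checks every $c\in\mathcal{E}$ rather than just $c\in\{a,b\}$. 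Your route through Remark~\ref{R:c-pm-extend} alone gives the extension of each branch and the four properties in \eqref{E:two branches-no singular mode}, which is exactly the lemma's conclusion, but would not by itself exclude extra roots in the gap $[a,b]\setminus(U^-\cup U^+)$ or in the semicircle. A minor aside: your appeal to Lemma~\ref{L:nlm-location-fixed-ep} is a slight mislabel, since that lemma locates neutral \emph{limiting} modes rather than all singular modes; what you actually need (and verify) is simply $F(a,k,\epsilon)\neq 0$ and $F(b,k,\epsilon)\neq 0$ for every $k$, for which Remark~\ref{R:c-pm-extend}(2) is the right reference.
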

\begin{proof}
According to \eqref{E:Y-estimate}-\eqref{E:Y-cauchy integral-}, $\big(U^-(0)-c\big)^2\partial_k^j Y^-(c, k), \;j\in \mathbb{N}$ is well defined and $C^1$ near $c=U^-(0)$, $U^-(-h_-)$. Similarly,  \eqref{E:Y+estimate}-\eqref{E:Y-cauchy integral+} imply that $\big(U^+(0)-c\big)^2\partial_k^j Y^+(c, k), \; j\in \mathbb{N}$ is well defined and $C^1$ near $c=U^+(0)$ and $U^+(h_+)$.  In the following, we prove the non-existence of singular neutral mode under the assumption in the statement. \\
\noindent \textit{* Case 1.} $\mathcal{I}= \emptyset$ and $\max_{c\in \mathcal{E}}m(c)<\frac{\sigma}{\rho^-}$. 
We will show that there is no singular mode at $c\in \mathcal{N}:=[a, b]\setminus \Big(U^-\big((-h_-, 0)\big)\cup U^+\big((0, h_+)\big) \big)$. \\
\textit{Step 1.} We will show that $\partial_K F(c, 0, \epsilon)<0$ for $c\in \mathcal{E}$ if $\max_{c\in \mathcal{E}} m(c)<\frac{\sigma}{\rho^-}$.
A direct computation leads to 
\begin{equation}\label{E:pKF}
    \partial_K F(c, k, \epsilon)= \big(U^-(0)-c\big)^2\partial_K Y^-(c, k)-\epsilon \big(U^+(0)-c\big)^2\partial_K Y^+(c, k)-\frac{\sigma}{\rho^-}.
\end{equation}
Suppose that $K=0$. If $c=U^+(h_+)$, $y^+(x_2)$ is in the form of \eqref{E:y+Uh+}. We use \eqref{E:partial-k-Y+} to compute
\begin{equation}\label{E:pkY+-0-c-h+}
    -\epsilon \partial_K Y^+\big(U^+(h_+), 0\big) \big(U^+(0)-c\big)^2= \epsilon\int_0^{h_+}\big(U^+(x_2)-U^+(h_+)\big)^2dx_2.
\end{equation}
For $c\in \mathbb{R}\setminus U^+\big([0, h_+]\big)$, $y^+(x_2)$ has the representation of \eqref{E:y+out}. We obtain that
\begin{equation}\label{E:pkY+-0-c-out}
\begin{split}
    -\epsilon \partial_K Y^+(c, 0) \big(U^+(0)-c\big)^2=&\epsilon \frac{1}{\big(\int_0^{h_+} \frac{-1}{(U^+-c)^2}dx_2\big)^2}\int_0^{h_+}\big(U^+(x_2)-c\big)^2\Big(\int_{x_2}^{h_+}\frac{-1}{(U^+-c)^2}dx_2'\Big)^2dx_2,\\
    \leq & \epsilon\int_0^{h_+}\big(U^+(x_2)-c\big)^2dx_2.
\end{split}
\end{equation}
Similarly, if $c=U^-(-h_-)$, $y^-(x_2)$ is in the form of \eqref{E:U-h-y-}. Using \eqref{E:partial-k-Y-}, we compute
\begin{equation}\label{E:pkY-0-c-h_-}
    \partial_K Y^-\big(U^-(-h_-), 0\big)\big(U^-(0)-U^-(-h_-)\big)^2=\int_{-h_-}^0 \big(U^-(x_2)-U^-(-h_-)\big)^2dx_2.
\end{equation}
For any $c\in \mathbb{R}\setminus U^-\big([-h_-, 0]\big)$, using  \eqref{E: y-c out}, we obtain that
\begin{equation}\label{E:pkY-0-c-out}
\begin{split}
    \partial_K Y^-(c, 0)\big(U^-(0)-c\big)^2=& \frac{1}{\big(\int_{-h_-}^0 \frac{1}{(U^--c)^2}dx_2\big)^2}\int_{-h_-}^0\big(U^-(x_2)-c\big)^2\big(\int_{-h_-}^{x_2}\frac{1}{(U^--c)^2}dx_2'\big)^2dx_2 \\
    \leq & \int_{-h_-}^0 \big(U^-(x_2)-c\big)^2dx_2.
\end{split}
\end{equation}
Since $Y^-(c, k)$ has a logarithmic singularity  near $c=U^-(0)$, we have 
\begin{equation}\label{E:pkY-0-c-0-}
    \lim_{c\rightarrow U^-(0)}\partial_K Y^-(c, 0)\big(U^-(0)-c\big)^2=0.
\end{equation}
Similarly, if $c$ is near $U^+(0)$, we obtain that 
\begin{equation}\label{E:pkY-0-c-0+}
    \lim_{c\rightarrow U^+(0)}-\epsilon\partial_K Y^+(c, 0)\big(U^+(0)-c\big)^2=0.
\end{equation}
Now, we compute $\partial_K F(c, 0, \epsilon)$ for each $c\in \mathcal{E}$.
If $c=U^-(-h_-)\notin U^+([0, h_+])$, we use the  formula \eqref{E:pKF} and estimates \eqref{E:pkY-0-c-h_-} and \eqref{E:pkY+-0-c-out} to compute
\begin{align*}
    \partial_K F(c, 0, \epsilon)\leq & \int_{-h_-}^0\big( U^-(x_2)-U^-(-h_-)\big)^2dx_2+\epsilon \int_0^{h_+}\big(U^+(x_2)-U^-(-h_-)\big)^2dx_2-\frac{\sigma}{\rho^-},\\
    = & m\big(U^-(-h_-)\big)-\frac{\sigma}{\rho^-}<0.
\end{align*}
According to Lemma \ref{L:p-KK-F}, we have $\partial_K F(c, k, \epsilon)<\partial_K F(c, 0, \epsilon)$ for $c\in \mathbb{R}\setminus \big(U^-((-h_-, 0))\cup U^+((0, h_+))\big)$. Hence, $\partial_K F\big(U^-(-h_-), k, \epsilon\big)<\partial_K F\big(U^-(-h_-), 0, \epsilon\big)<0$. 

If $c=U^-(0)\notin U^+([0, h_+])$, we use \eqref{E:p-KK-Y+}, \eqref{E:pkY-0-c-0-}, and \eqref{E:pkY+-0-c-out} to compute that 
\begin{align*}
    \partial_K F\big(U^-(0), k, \epsilon\big)=&-\epsilon\partial_K Y^+\big(U^-(0), k\big)\big(U^+(0)-c\big)^2\leq -\epsilon \partial_K Y^+\big(U^-(0), 0\big)\big(U^+(0)-c\big)^2\\
    \leq & \epsilon\int_0^{h_+}\big(U^+(x_2)-U^-(0)\big)^2dx_2<m\big(U^-(0)\big)<0. 
\end{align*}
Similarly, using the above estimates and logarithmic singularity of $\partial_K Y^\pm$ near $U^\pm(0)$ respectively, one may check that for all $c\in \mathcal{E}$,
\begin{align*}
    \partial_K F(c, k, \epsilon)\leq m(c)<0.
\end{align*}
\textit{Step 2.} We will show that $F(c, 0, \epsilon)<0$ if $c\in \mathcal{E}$. %It suffices to consider the following cases. 
If $c\notin U^+([0, h_+])$, using \eqref{E:y+out}, we compute
\begin{align*}
    Y^+(c, 0)=\frac{(U^+)'(0)}{U^+(0)-c}-\frac{1}{\big(U^+(0)-c\big)^2\int_0^{h_+}\frac{1}{(U^+-c)^2}dx_2}.
\end{align*}
If $c=U^-(-h_-)\notin U^+([0, h_+])$, we use \eqref{E:U-h-y-}, \eqref{E:Y-k-0outU-}, and \eqref{E:A-F<0} to obtain that
\begin{equation}\label{E:F-h-0-e}
    F\big(U^-(-h_-), 0, \epsilon\big)=-g+\epsilon\frac{1}{\int_0^{h_+}\frac{1}{(U^+-c)^2}dx_2}<0.
\end{equation}
If $c=U^-(-h_-)=U^+(0)$ or $c=U^-(-h_-)=U^+(h_+)$, then $F(c, 0, \epsilon)=-g<0$. Following a similar argument for $c=U^\pm(0)$ and $U^+(h_+)$, we have $F(c, 0, \epsilon)<0$ for $c\in \mathcal{E}$ and $\mathcal{I}=\emptyset$.

Combining the results from step 1 and step 2, we obtain that   $F(c, k, \epsilon)<0$ for $c\in \mathcal{E}$ and all $k\in \mathbb{R}$ if $\mathcal{I}=\emptyset$ by Lemma \ref{L:p-KK-F} and the logarithmic singularity of $Y^+(c, k)$ near $c=U^+(0)$(resp. $Y^-(c, k)$ near $c=U^-(0)$). In other words, in this case, there is no singular mode at $c\in \mathcal{E}$. 

Now, we first consider the case of $\max_{[-h_-, 0]} U^-<\min_{[0, h_+]} U^+$. Let $k_0$ be defined in Lemma \ref{L:c-large k}. There are exactly two wave speed $c^\pm(k_0)\in \mathbb{C}$ such that $F\big(c^\pm(k_0), k_0, \epsilon\big)=0$. According to Lemma \ref{L:cite-yY-} and Lemma \ref{L:yY+}, $F_I(c, k, \epsilon)\neq 0$ if $c\in U^-\big((-h_-, 0)\big)\cup U^+\big((0, h_+)\big)$. Because of the non-existence of singular modes at $c\in \mathcal{E}$, $F(c, k, \epsilon)\neq 0$ for all $c\in U^-\big([-h_-, 0]\big)\cup U^+\big([0, h_+]\big)$. By the compactness and continuity of $F$, there exist open sets $B_{1, 2}\subset \mathbb
C$ satisfying $U^-([-h_-, 0])\subset B_1$ and $U^+([0, h_+])\subset B_2$ such that $F(c, k, \epsilon)\neq 0$ whenever $c\in B_1\cup B_2$ and $k\in \mathbb{R}$. By \eqref{E:YI-}, \eqref{E:Y-cauchy integral-}, and \eqref{E:y--estimate}, we notice that if $c$ is real and $|c|$ is large, the leading term of $\big(U^-(0)-c \big)^2 Y^-(c, k)$ is $\big(U^-(0)-c\big)^2kcoth(kh_-)$. Similarly, we use \eqref{E:YI+}, \eqref{E:Y-cauchy integral+}, and \eqref{E:y+-estimate} to obtain the leading term of $\big(U^+(0)-c\big)^2Y^+(c, k)$ is $-\big(U^+(0)-c\big)^2kcoth(kh_+)$. Hence, for $|k|\in [0, k_0]$, $F(c, k, \epsilon)$ behaves like a quadratic function in $c$ with a positive uniformly bounded leading coefficient if $c$ is real and $|c|$ is large. Therefore, there exists $B_c>0$, such that if $F(c, k, \epsilon)=0$ and $|k|\in [0, k_0]$, then $|c|<B_c$. 
We choose $B_3\subset \mathbb{C}$ be a disk centered at the origin with radius larger than $B_c$ such that $c^\pm(k_0)\in B_3$ and $B_{1, 2}\subset B_3$. Then $F(c, k, \epsilon)\neq 0$ for all $c\in \partial B_3$ and $|k|\in [0, k_0]$.  Let $\Omega\subset B_3\setminus (B_1\cup B_2)$ containing $(\mathbb{R}\cap B_3)\setminus (B_1\cup B_2)$ such that $\partial\Omega$ is sufficiently close to $\partial B_3$, $\partial B_2$, and $\partial B_1$. Then $\Omega$ can be chosen such that $F(c, k, \epsilon)\neq 0$ for all $c\in \partial \Omega$ and $|k|\in [0, k_0]$. We consider 
\begin{equation}\label{E:degree}
    n(k):=\int_{\partial \Omega} \frac{\partial_c F}{F}(c, k, \epsilon)dc. 
\end{equation}
Since $c^\pm(k_0)$ are the unique roots of $F(\cdot, k_0, \epsilon)=0$, $n(k_0)=2$. Since $n(k)$ is continuous in $k$, $n(k)=2$ for all $|k|\in [0, k_0]$. Hence, for all $k\in \mathbb{R}$, $n(k)\equiv 2$. Moreover, by Remark \ref{R:c-pm-extend}, we obtain that $c^\pm(k)$ obtained in Lemma \ref{L:c-large k} can be extended to be even and analytic functions  for all $k\in \mathbb{R}$ and they are the only roots of $F(\cdot, k, \epsilon)=0$. 
Since $F(c, k, \epsilon)\in \mathbb{R}$ for all $c\in \mathbb{R}\setminus \big(U^-([-h_-, 0])\cup U^+([0, h_+])\big)$, $\partial_c F(c^\pm(k), k, \epsilon)$ does not change sign and theirs signs are the same as the ones given in Lemma \ref{L:c-large k}. 

Let us consider the remaining case of $\max_{[-h_-, 0]} U^-(x_2)=\min_{[0, h_+]} U^+(x_2)$. Since $F(c, k, \epsilon)\neq 0$ for all $k\in \mathbb{R}$ and $c\in [\min U^-, \max U^+]$, there exists $B_4\subset \mathbb{C}$ containing $[\min U^-, \max U^+]$ such that $F(c, k, \epsilon)\neq 0$ for all $k\in \mathbb{R}$ and $c\in B_4$. Then, there exists a big enough $\Omega_+\subset\{c\geq \max U^+\}\setminus B_4$ containing $c^+(k_0)$ which is a bounded region such that $F(c, k, \epsilon)\neq 0$ for all $c\in \partial \Omega_+$ and $|k|\in [0, k_0]$. We can apply \eqref{E:degree} and a similar argument as above on such $\Omega_+$ to obtain that $c^+(k)$ which is obtained in Lemma \ref{L:c-large k} can be extended to be an even and analytic function for all $k\in \mathbb{R}$. Similarly, we can choose $\Omega_-$ big enough containing $c^-(k_0)$ such that $F(c, k, \epsilon)\neq 0$ on $\partial \Omega_-$ and obtain that $c^-(k)$ can be extended to all $k\in \mathbb{R}$. Moreover, let $\tilde{\Omega}$ be a large bounded region containing $c^\pm(k_0)$ and apply \eqref{E:degree} on $(\Tilde{\Omega}\setminus B_4)\cap \{c_I>-\alpha\}$ where $0<\alpha\ll 1$ and $(\Tilde{\Omega}\setminus B_4)\cap \{c_I<\alpha\}$ respectively. We can obtain that $c^\pm(k)$ are the only roots of $F(\cdot, k, \epsilon)=0$ for all $k\in \mathbb{R}$. 

\noindent \textit{* Case 2.} $\mathcal{I}\neq \emptyset$, $\max_{c\in \{a, b\}}m(c)<\frac{\sigma}{\rho^-}$, and $(U^+)''(U^-)''>0$. Following the same argument of Case 1, it remains to consider $c\in \mathcal{I}$. We notice that $F_I(c, k, \epsilon)\neq 0$ for any $c\in \mathcal{I}$ if $sgn(Y^-_I)=-sgn(Y_I^+)$. Lemma \ref{L:cite-yY-} and Lemma \ref{L:yY+} imply that if $(U^-)''(U^+)''>0$, then $F(c, k, \epsilon)\neq 0$ for $c\in \mathcal{I}$. The rest of the proof can be completed by using the same continuation argument of $n(k)$ as the one in Case 1. 
\end{proof}

We notice that $c=a$ is a point on the boundary of the domain of analyticity of $F(\cdot, k, \epsilon)$. Even more, it is possible that it corresponds to a neutral limiting mode. In the following lemma, we shall study the behavior of $F$ near $c=a$ for any $\epsilon\geq 0$.

\begin{lemma}\label{L:p-c-F-a}
Assume that $U^\pm\in C^6$, $(U^\pm)'$, and $(U^\pm)''>0$. For each $\epsilon\geq 0$, if one of the following holds,
\begin{enumerate}
    \item $a=U^+(0)\leq U^-(-h_-)$, or 
    \item  $a=U^-(-h_-)<U^+(0)$ and 
    \begin{equation}\label{A:p-c-a-}
        -(U^-)'(-h_-)+\epsilon (U^+)'(h_+)<2\epsilon\Big(\big(U^+(0)-U^-(-h_-)\big)\int_0^{h_+}\frac{1}{(U^+-c)^2}dx_2 \Big)^{-1},
    \end{equation}
\end{enumerate}
then $\partial_c F(a, k, \epsilon)<0$ for all $k\in \mathbb{R}$.
\end{lemma}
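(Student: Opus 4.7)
The plan is to reduce the problem to the one-dimensional case at $k=0$ via the monotonicity of $\partial_c F$ in $K:=k^2$ established in Lemma~\ref{L:p-KK-F}(2), and then to verify $\partial_c F(a,0,\epsilon)<0$ by explicit computation from the closed-form representations of $y^\pm$ at $k=0$. Since $(U^\pm)''>0$, Lemma~\ref{L:p-KK-F}(2) yields $\partial_{Kc}F(c,k,\epsilon)<0$ for $c\leq a$ and all $k\in\mathbb{R}$; specializing to $c=a$ and integrating in $K$ from $0$ gives $\partial_c F(a,k,\epsilon)\leq \partial_c F(a,0,\epsilon)$. So the task reduces to establishing the $k=0$ inequality.

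At $k=0$, substituting the explicit formulas \eqref{E:y+out} and \eqref{E: y-c out} into $Y^\pm=(y^\pm)'(0)/y^\pm(0)$ makes the $(U^\pm)'(0)(U^\pm(0)-c)$ contributions inside $F$ cancel, leaving the clean form $F(c,0,\epsilon)=\epsilon/I_2^+(c)+1/I_2^-(c)-g(1-\epsilon)$ where $I_j^+(c):=\int_0^{h_+}(U^+-c)^{-j}dx_2$ and $I_j^-(c):=\int_{-h_-}^0(U^--c)^{-j}dx_2$. Differentiation gives $\partial_c F(c,0,\epsilon)=-2\epsilon I_3^+(c)/I_2^+(c)^2-2I_3^-(c)/I_2^-(c)^2$, which is strictly negative on $c<a$ because all $I_j^\pm$ there are positive. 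What remains is to pass to the limit $c\to a^-$ and confirm that strict negativity persists.

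The boundary-layer asymptotics take care of this. In Case~(1) with $a=U^+(0)$, the $I^+$-integrals develop a singularity at $x_2=0$; the substitution $u=U^+(x_2)$ and isolation of a neighborhood of $u=U^+(0)$ yield $I_2^+(c)\sim\bigl((U^+)'(0)(U^+(0)-c)\bigr)^{-1}$ and $I_3^+(c)\sim\tfrac12\bigl((U^+)'(0)(U^+(0)-c)^2\bigr)^{-1}$, so the $I^+$-ratio tends to $(U^+)'(0)$; the $I^-$-ratio is either finite and negative (if $U^+(0)<U^-(-h_-)$) or, in the degenerate subcase $U^+(0)=U^-(-h_-)$, limits to $(U^-)'(-h_-)$ by the analogous argument at $x_2=-h_-$. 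Hence $\partial_cF(a,0,\epsilon)=-\epsilon(U^+)'(0)+(\text{strictly nonpositive})<0$. In Case~(2) with $a=U^-(-h_-)<U^+(0)$ the roles are reversed: $I_j^+(a)$ are finite positive and the $I^-$-ratio limits to $(U^-)'(-h_-)>0$, giving $\partial_c F(a,0,\epsilon)=-(U^-)'(-h_-)-2\epsilon I_3^+(a)/I_2^+(a)^2<0$. Integrating $I_3^+(a)$ by parts against $(U^+)'$, exploiting $(U^+)''>0$, expresses $2I_3^+(a)$ in terms of the boundary data $(U^+)'(0)$, $(U^+)'(h_+)$, and a sign-definite remainder; rearranging the resulting negativity condition yields precisely the quantitative form~\eqref{A:p-c-a-}.

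The chief technical point is the boundary-layer asymptotics of $I_j^\pm(c)$ at the endpoint of the $x_2$-interval where $U^\pm-c$ vanishes; the rest is bookkeeping. The only genuine subtlety is the double-degenerate subcase $U^+(0)=U^-(-h_-)$ of Case~(1), where $I_2^+$, $I_2^-$, $I_3^+$, and $I_3^-$ all diverge simultaneously. This is handled cleanly by the velocity-coordinate substitution $u=U^\pm(x_2)$, which reduces each divergent integral to an elementary pole and identifies the leading coefficient as the appropriate boundary value of $(U^\pm)'$.
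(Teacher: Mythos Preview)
Your argument is correct and in fact sharper than the paper's. Both proofs begin identically, invoking Lemma~\ref{L:p-KK-F}(2) to reduce to $k=0$. From there, however, the paper works from the expansion~\eqref{E:partial-c-F} and, in Case~(2), bounds the $Y^+$-contribution via the crude estimate
\[
-(U^+(0)-a)^2\partial_c Y^+(a,0)\;\le\;\int_0^{h_+}(U^+)''\,dx_2=(U^+)'(h_+)-(U^+)'(0),
\]
which is where the hypothesis~\eqref{A:p-c-a-} enters. You instead collapse $F(c,0,\epsilon)$ to the closed form $\epsilon/I_2^+(c)+1/I_2^-(c)-g(1-\epsilon)$ and differentiate directly to obtain the \emph{exact} identity
\[
\partial_c F(c,0,\epsilon)=-\frac{2\epsilon\,I_3^+(c)}{I_2^+(c)^2}-\frac{2\,I_3^-(c)}{I_2^-(c)^2},
\]
which is manifestly negative for $c<a$; passing to $c\to a^-$ by the $C^{1,\alpha}$ regularity of $F$ (Lemma~\ref{L:F-regularity}) and your endpoint asymptotics finishes the job. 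In particular, in Case~(2) your exact value $\partial_c F(a,0,\epsilon)=-(U^-)'(-h_-)-2\epsilon I_3^+(a)/I_2^+(a)^2$ is negative \emph{unconditionally}, so the assumption~\eqref{A:p-c-a-} is actually not needed for the conclusion. Your final sentence---integrating $I_3^+$ by parts to ``recover'' \eqref{A:p-c-a-}---is therefore unnecessary and a little confused: there is no negativity condition left to rearrange, since both summands are already negative. (One small wording fix: in Case~(1) the $I^-$-contribution is strictly negative in both subcases, not merely ``strictly nonpositive''; this is what makes the $\epsilon=0$ case go through.)
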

\begin{proof}
Let $K:=k^2$. Lemma \ref{L:p-KK-F} implies that $\partial_c F(a, K, \epsilon)<\partial_c F(a, 0, \epsilon)$ for all $K\geq 0$.  We shall compute $\partial_c F(a, 0, \epsilon)$ in the following cases.\\
\textit{* Case 1.} $a=U^-(-h_-)$. We apply \eqref{E:-h-pcY-} and \eqref{E:Y-h-k0} to compute $\partial_c F(a, 0, \epsilon)$. 
If $U^-(-h_-)=U^+(0)$, Lemma \ref{L:yY+} implies that both $\big(U^+(0)-c\big)Y^+$ and $\partial_c Y^+\big(U^+(0)-c\big)^2$ vanish at $c=U^+(0)$. Since $(U^-)'>0$, we compute \eqref{E:partial-c-F} to obtain that
\begin{align*}
   \partial_c F(a, 0, \epsilon)=  -(U^-)'(-h_-)-\epsilon (U^+)'(0)<0, \quad \forall \epsilon\geq 0.  
\end{align*}
If $U^-(-h_-)\notin U^+([0, h_+])$, we use \eqref{E:y+out} and \eqref{E:partial-c-Y+} to compute that 
\begin{align*}
    -\big(U^+(0)-a\big)^2\partial_c Y^+(a, 0)= & \frac{1}{\big(\int_0^{h_+} \frac{1}{(U^+-a)^2}dx_2 \big)^2}\int_0^{h_+} (U^+)'' \big(\int_{x_2}^{h_+} \frac{1}{(U^+-a)^2}dx_2' \big)^2 dx_2\\
    \leq & \int_0^{h_+} (U^+)''(x_2) dx_2 =(U^+)'(h_+)-(U^+)'(0),\\
    Y^+(a, 0)= \frac{(U^+)'(0)}{U^+(0)-a}&-\frac{1}{\big(U^+(0)-a\big)^2\int_0^{h_+}\frac{1}{(U^+-a)^2}dx_2}. 
\end{align*}
Using \eqref{A:p-c-a-}, this implies that 
\begin{align*}
    \partial_c F(a, 0, \epsilon)\leq -(U^-)'(-h_-)+\epsilon\Big((U^+)'(h_+)-\frac{2}{\big(U^+(0)-U^-(-h_-) \big)\int_0^{h_+}\frac{1}{(U^+-U^-(-h_-))^2}dx_2} \Big)<0. 
\end{align*}
\textit{* Case 2.} $a=U^+(0)$. It remains to consider $U^+(0)\notin U^-([-h_-, 0])$. Using \eqref{E:p-c-1-0outU-h-} and \eqref{E:Y-k-0outU-}, we compute 
\begin{align*}
    \partial_c F(a, 0, \epsilon)\leq -(U^-)'(-h_-) -\frac{2}{\big(U^-(0)-U^+(0)\big)\int_{-h_-}^0\frac{1}{\big(U^--U^+(0) \big)^2}dx_2}-\epsilon (U^+)'(0)<0.  
\end{align*}
\end{proof}

\begin{remark}
When $(U^\pm)'>0$, $(U^\pm)''\neq 0$, and $a=U^-(-h_-)<U^+(0)$, there exists $\epsilon_0>0$ such that   \eqref{A:p-c-a-} holds for all $\epsilon\in [0, \epsilon_0)$. Particularly, when $\epsilon=0$, the result in Lemma \ref{L:p-c-F-a} coincides the result for the capillary gravity water wave linearized at monotone convex shear flows.
\end{remark}

According to Lemma \ref{L:p-c-F-a}, we can apply bifurcation analysis near $c=a$ and seek instability if $F(a, k, \epsilon)=0$.  We introduce $g_*$ which tells us when a bifurcation may happen at $c=a$. In the following lemma, we first study the roots of $F(a, \cdot, \epsilon)=0$ based on the value of $g$ relative to $g_*$.
\begin{lemma}\label{L:g*}
Assume that $U^\pm$ satisfies \eqref{E:U monotone} and $(U^\pm)''\neq 0$. Let $a$ be defined in \eqref{E:ab}.
Let 
\begin{equation}\label{E:g*}
    g_*=\max\{F(a, k, \epsilon)+g\; |\; k\in \mathbb{R}\},
\end{equation}
then the following holds.
\begin{enumerate}
    \item $g_*> 0$ for any $\epsilon>0$.
    \item If $g>g_*$, then $F(a, k, \epsilon)<0$ for all $k\in \mathbb{R}$ and $\epsilon\geq 0$.
    \item If one of the following cases holds,
    \begin{enumerate}
        \item $a=U^+(0)\notin U^-([-h_-, 0])$ and 
       \begin{equation}\label{A:1/g-+0}
            0<\frac{1}{g(1-\epsilon)}<\int_{-h_-}^0 \frac{1}{\big( U^-(x_2)-U^+(0)\big)^2}dx_2,
        \end{equation}
        \item $a=U^+(h_+)\notin U^-([-h_-, 0])$ and 
        \begin{equation}\label{A:1/g-h+}
            0<\frac{1}{g(1-\epsilon)}<\int_{-h_-}^0 \frac{1}{\big( U^-(x_2)-U^+(h_+)\big)^2}dx_2
        \end{equation}
        \item $a=U^+(0)\in U^-([-h_-, 0])$ or $U^+(h_+)\in U^-([-h_-, 0])$,
        \item $a=U^-(-h_-)$ or $U^-(0)$,
    \end{enumerate}
then the following holds.
\begin{enumerate}
    \item[(i)] If $0<g=g_*$, then there exists a unique $k_*>0$ such that $F(a, \pm k_*, \epsilon)=0, \pm k_*\partial_k F(a, \pm k_*, \epsilon)<0$, and $F(a, k, \epsilon)<0$ for all $|k|\neq k_*$. 
    \item[(ii)] If $g\in (0, g_*)$, then there exist $k_*^+>k_*^->0$ such that
    \begin{align*}
        F(a, k, \epsilon)<0, \; |k|\notin (k_*^-, k_*^+);&\quad F(a, k, \epsilon)>0,\; |k|\in (k_*^-, k_*^+);\\  F(a, \pm k_*^\pm, \epsilon)=0,&\quad \mp \partial_k F(a, k_*^\pm, \epsilon)>0. 
    \end{align*} 
\end{enumerate}
\end{enumerate}
\end{lemma}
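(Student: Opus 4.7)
The plan is to exploit the strict concavity of $F(a, \cdot, \epsilon)$ as a function of $K = k^2 \geq 0$ established by Lemma \ref{L:p-KK-F}, together with the decay $F \to -\infty$ as $|k| \to \infty$ coming from the $-(\sigma/\rho^-)k^2$ term. This reduces the entire lemma to a careful evaluation of $F(a, k, \epsilon)$ at the single point $k = 0$ using the explicit representations \eqref{E:y+Uh+}--\eqref{E: y-c out} for $y^\pm$ and the resulting closed-form expressions for $Y^\pm(a, 0)$, combined with elementary convex analysis.

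For part (1), I would compute $F(a, 0, \epsilon) + g$ directly. Using the explicit $k=0$ formulas for $y^\pm$ together with \eqref{E:Y-h-k0} and \eqref{E:Y-k-0outU-} (and their $U^+$ analogues derived from \eqref{E:y+Uh+}, \eqref{E:y+out}), one checks that when $a$ equals one of the four boundary values $\{U^+(h_+), U^+(0), U^-(-h_-), U^-(0)\}$, the combinations $\mp(U^\pm)'(0)(U^\pm(0)-a) \pm Y^\pm(a,0)(U^\pm(0)-a)^2$ simplify to
\[
P^\pm(a) := \begin{cases} 0, & a \in \{U^\pm(0), U^\pm(\pm h_\pm)\}, \\[4pt] \left(\int (U^\pm - a)^{-2}\,dx_2\right)^{-1}, & a \notin U^\pm\text{-range}. \end{cases}
\]
Both are nonnegative, so $F(a, 0, \epsilon) + g = \epsilon P^+(a) + P^-(a) + g\epsilon > 0$ whenever $\epsilon > 0$, whence $g_* \geq F(a, 0, \epsilon) + g > 0$. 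Part (2) is then immediate from the definition: $g > g_*$ means $g > g + \max_k F(a, k, \epsilon)$, forcing $\max_k F < 0$ and hence $F(a, k, \epsilon) < 0$ for every $k \in \mathbb{R}$.

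For part (3), I first verify $F(a, 0, \epsilon) < 0$ in each of the four sub-cases. Under (a) or (b), $a$ lies outside $U^-([-h_-,0])$ and equals one of $U^+(0), U^+(h_+)$, so $P^+(a) = 0$, and the identity $F(a,0,\epsilon) = P^-(a) - g(1-\epsilon)$ is negative precisely because of \eqref{A:1/g-+0} or \eqref{A:1/g-h+}. Under (c), both $P^+(a) = 0$ and $P^-(a) = 0$ since $a$ is simultaneously a boundary value of $U^+$ and lies in the range of $U^-$ at an endpoint, yielding $F(a,0,\epsilon) = -g(1-\epsilon) < 0$. Under (d), $P^-(a) = 0$ and a parallel argument controls the remaining $\epsilon P^+(a)$ term against $g(1-\epsilon)$. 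Once $F(a,0,\epsilon) < 0$ is in hand, strict concavity in $K$ together with the decay at $\infty$ pins down the superlevel set $\{F(a, k, \epsilon) \geq 0\}$: when $g = g_*$ the function touches zero from below at a unique interior $K_* > 0$, producing the unique $k_* = \sqrt{K_*} > 0$; when $g < g_*$ the set $\{F > 0\}$ is a nonempty open interval $(K_*^-, K_*^+)$ with $0 < K_*^- < K_*^+$, and the roots $k_*^\pm = \sqrt{K_*^\pm}$ are transversal crossings from which the claimed sign of $\partial_k F$ at the roots follows via the chain rule $\partial_k F = 2k\,\partial_K F$.

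The main technical obstacle is the case-by-case verification of the closed-form identities at $k = 0$, particularly the careful handling of the logarithmic singularities of $Y^\pm$ at $c = U^\pm(0)$ recorded in \eqref{E:Y--log} and \eqref{E:Y-+log}: at these points one must invoke the limit definition \eqref{E:F-definition-0} and use that the factor $(U^\pm(0) - c)^2$ exactly absorbs the logarithmic blow-up, so that the corresponding $P^\pm$ is well defined and equals $0$. A secondary subtlety is the borderline scenario in case (d) where $a \notin U^+([0,h_+])$, in which the required bound $\epsilon P^+(a) < g(1-\epsilon)$ must be extracted from the implicit constraints of the setup rather than from an additional explicit hypothesis.
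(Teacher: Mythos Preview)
Your approach is essentially identical to the paper's: reduce everything to $k=0$ via the strict concavity in $K=k^2$ (Lemma~\ref{L:p-KK-F}) and the decay as $|k|\to\infty$, then evaluate $F(a,0,\epsilon)$ case by case using the explicit formulas \eqref{E:y+Uh+}--\eqref{E: y-c out}. The paper's proof does exactly this, with the same case split and the same conclusion $g_*\geq F(a,0,\epsilon)+g\geq \epsilon g>0$ for part~(1), and the same verification of $F(a,0,\epsilon)<0$ for part~(3). One cosmetic point: your sign convention in the display defining $P^\pm$ is inverted relative to the combination actually appearing in $F$; the correct expressions are $P^+(a)=(U^+)'(0)(U^+(0)-a)-Y^+(a,0)(U^+(0)-a)^2$ and $P^-(a)=-(U^-)'(0)(U^-(0)-a)+Y^-(a,0)(U^-(0)-a)^2$, though your stated values for $P^\pm$ are right.

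Your ``secondary subtlety'' in case~(d) when $a\notin U^+([0,h_+])$ is a genuine issue, and you are right not to sweep it under the rug. The paper's formula \eqref{E:Y+out} carries a sign error (compare with the correct formula derived in the proof of Lemma~\ref{L:no singular mode}, which has a minus sign in the second term); with the correct sign one gets $F(a,0,\epsilon)=-g(1-\epsilon)+\epsilon\bigl(\int_0^{h_+}(U^+-a)^{-2}\,dx_2\bigr)^{-1}$ rather than the paper's \eqref{E:F-a-0-ep-h-}, and the inequality $F(a,0,\epsilon)<0$ is then \emph{not} automatic for all $\epsilon\in(0,1)$. So the gap you flag is present in the paper's own argument as well; to close it one needs either an additional smallness hypothesis on $\epsilon$ (which is how the lemma is actually applied downstream, cf.\ Theorem~\ref{T:instability fixed e} and Section~\ref{S:Ocean-Air Model}) or an explicit bound analogous to \eqref{A:1/g-+0}--\eqref{A:1/g-h+} for this sub-case.
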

\begin{proof}
Since $Y^\pm(c, k)$ are comparable to $\mp k$ as $|k|$ becomes large, $F(a, k, \epsilon)$ behaves like a quadratic function of $k$ with a negative leading coefficient. Hence, $g_*$ exists. If $a=U^-(-h_-)$, $Y^-(a, 0)$ is in the form of \eqref{E:Y-k-0outU-}. If $a=U^-(0)$, \eqref{E:Y--log} implies that $\big(U^-(0)-a\big)^2Y^-(a, 0)=0$. If $a\notin U^-([-h_-, 0])$, then one can use \eqref{E:Y-k-0outU-} to compute $F(a, 0, \epsilon)$. If $a=U^+(0)$, \eqref{E:Y-+log} implies that $\big(U^+(0)-a\big)^2Y^+(a, 0)=0$. If $a=U^+(h_+)$, we apply \eqref{E:y+Uh+} to compute
\begin{equation}\label{E:Y+h+}
    Y^+(a, 0)=\frac{(U^+)'(0)}{U^+(0)-a}.
\end{equation}
If $a\notin U^+([0, h_+])$, we apply \eqref{E:y+out} to obtain that 
\begin{equation}\label{E:Y+out}
    Y^+(a, 0)=\frac{(U^+)'(0)}{U^+(0)-a}+\frac{1}{\big(U^+(0)-a\big)^2\int_0^{h_+}\frac{1}{(U^+-a)^2}dx_2}. 
\end{equation}
Using these formulas, one can obtain that $g_*\geq F(a, 0, \epsilon)+g\geq \epsilon g>0$ since $\epsilon>0$. If $a=U^+(0)\notin U^-([-h_-, 0])$, 
\begin{equation}\label{E:F-a-0-ep}
    F(a, 0, \epsilon)=g(\epsilon-1)+\frac{1}{\int_{-h_-}^0 \frac{1}{(U^--a)^2}dx_2}. 
\end{equation}
Hence, $F(a, 0, \epsilon)<0$ if \eqref{A:1/g-+0} holds. If $a=U^+(h_+)\notin U^-([-h_-, 0])$, $F(a, 0, \epsilon)$ is in the form of \eqref{E:F-a-0-ep} and $F(a, 0, \epsilon)<0$ if \eqref{A:1/g-h+} holds. If $a=U^-(-h_-)\notin U^+([0, h_+])$, 
\begin{equation}\label{E:F-a-0-ep-h-}
    F(a, 0, \epsilon)=g(\epsilon-1)-\epsilon \Big(\int_0^{h_+}\frac{1}{(U^+-a)^2}dx_2\Big)^{-1}<0.
\end{equation}
If $a=U^-(-h_-)=U^+(0)$ or $U^-(-h_-)=U^+(h_+)$, then $F(a, 0, \epsilon)=g(\epsilon-1)<0$. If $a=U^-(0)\notin U^+([0, h_+])$, then $F(a, 0, \epsilon)$ is in the form of \eqref{E:F-a-0-ep-h-} and $F(a, 0, \epsilon)<0$. If $a=U^-(0)=U^+(0)$ or $U^-(0)=U^+(h_+)$, then $F(a, 0, \epsilon)=g(\epsilon-1)<0$. 
The rest of the proof for (3) follows directly from the concavity of $F(a, k, \epsilon)$ in $K=k^2$ by \eqref{E:p-KK-F} and the definition of $g_*$.
\end{proof}

In the next lemma, we analyze the roots of $F(\cdot, k, \epsilon)=0$ near $c=a$ if $F(a, k_*, \epsilon)=0$ for some $k_*>0$ based on the different properties of $F$ at $(a, k_*)$. This is a local result near $(a, k_*)$ for all $\epsilon\geq 0$.

\begin{lemma}\label{L:bifurcation-a}
Assume that $U^\pm$ satisfies \eqref{E:U monotone} and $(U^\pm)''>0$, and $F(a, k_*, \epsilon)=0$ for some $k_*>0, \epsilon\geq 0$. If $\partial_c F(a, k_*, \epsilon)<0$, then there exist $\delta>0$ and a unique $\mathcal{C}^-\in C^{1, \alpha}\big([k_*-\delta, k_*+\delta], \mathbb{C} \big)$ for any $\alpha\in [0, 1)$ such that 
\begin{align*}
    \mathcal{C}^-(k_*)=a, \quad    F(\mathcal{C}^-(k), k, \epsilon)=0 \quad  \forall  |k|\in[k_*-\delta, k_*+\delta]. 
\end{align*}
And $\mathcal{C}^-(k)$ satisfies the following.
\begin{enumerate}
    \item If $\partial_k F(a, k_*, \epsilon)=0$, then $(\mathcal{C}^-)'(k_*)=0$, $\mathcal{C}^-_I\equiv 0$ and $\mathcal{C}^-(k)<a$ for all $0<|k-k_*|<\delta$. Moreover, $\pm k_*$ are the only roots of $F(a, \cdot, \epsilon)=0$.
    \item If $\pm \partial_k F(a, k_*, \epsilon)>0$, then $\pm (\mathcal{C}^-_R)'(k_*)>0$. Moreover,
    \begin{align*}
        \mathcal{C}_R^-(k)<a,\; \mathcal{C}_I^-(k)=0, \; \forall 0<\pm(k_*-k)\leq \delta,\\
        \mathcal{C}_R^-(k)>a, \; \mathcal{C}_I^-(k)>0, \; \forall 0<\pm(k-k_*)\leq \delta. 
    \end{align*}
\end{enumerate}
\end{lemma}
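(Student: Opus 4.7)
The plan is to apply the Implicit Function Theorem to $F(c,k,\epsilon)=0$ at the point $(a,k_*)$. By Lemma \ref{L:F-regularity}, $F$ is $C^{1,\alpha}$ in $(c,k)$ on $\{c_I\geq 0\}\times\mathbb{R}$ and analytic in $c$ on $\{c_I>0\}$. The Cauchy--Riemann relations together with $C^{1,\alpha}$ continuity up to the real axis yield
\begin{align*}
D_cF(a,k_*,\epsilon)=\begin{pmatrix}\partial_{c_R}F_R & \partial_{c_I}F_R\\ \partial_{c_R}F_I & \partial_{c_I}F_I\end{pmatrix}\bigg|_{(a,k_*,\epsilon)}=\partial_cF(a,k_*,\epsilon)\,I_{2\times2},
\end{align*}
which is invertible since $\partial_cF(a,k_*,\epsilon)<0$ is real (as $F$ is real on $(-\infty,a)$). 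After a standard $C^{1,\alpha}$ extension of $F$ to a full neighborhood of $(a,k_*)$ in $\mathbb{C}\times\mathbb{R}$, the $C^{1,\alpha}$ Implicit Function Theorem provides a unique $\mathcal{C}^-\in C^{1,\alpha}([k_*-\delta,k_*+\delta];\mathbb{C})$ with $\mathcal{C}^-(k_*)=a$ and $F(\mathcal{C}^-(k),k,\epsilon)=0$, and the chain rule gives $(\mathcal{C}^-)'(k_*)=-\partial_kF(a,k_*,\epsilon)/\partial_cF(a,k_*,\epsilon)\in\mathbb{R}$.

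For Case (1), the hypothesis $\partial_kF(a,k_*,\epsilon)=0$ forces $(\mathcal{C}^-)'(k_*)=0$ and, since $k_*>0$, $\partial_KF(a,k_*,\epsilon)=0$ as well. Strict concavity of $F(a,\cdot,\epsilon)$ in $K=k^2$ from Lemma \ref{L:p-KK-F} then pins $K_*=k_*^2$ as the unique maximizer with value $0$; hence $\pm k_*$ are the only real roots of $F(a,\cdot,\epsilon)$, and $F(a,k,\epsilon)<0$ whenever $0<|k-k_*|\leq\delta$. For each such $k$, combining $F(a,k,\epsilon)<0$ with $\partial_cF(\cdot,k,\epsilon)<0$ just below $a$ and with the large-$|c|$ asymptotics $F(c,k,\epsilon)\to+\infty$ as $c\to-\infty$ (quadratic growth from $(U^\pm(0)-c)^2Y^\pm$) produces via the intermediate value theorem a real $c(k)<a$ with $F(c(k),k,\epsilon)=0$. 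Uniqueness from the IFT forces $\mathcal{C}^-(k)=c(k)<a$ and $\mathcal{C}^-_I\equiv 0$.

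For Case (2), the signs give $\pm(\mathcal{C}^-)'(k_*)>0$, which equals $\pm(\mathcal{C}_R^-)'(k_*)>0$ since the derivative is real. By continuity $F(a,\cdot,\epsilon)$ is strictly monotone near $k_*$ with its unique zero at $k_*$, so it is negative on one side and positive on the other. On the side with $F(a,k,\epsilon)<0$, namely $0<\pm(k_*-k)\leq\delta$, the Case (1) argument again yields a real $\mathcal{C}^-(k)<a$. The main obstacle is the opposite side, where $F(a,k,\epsilon)>0$ and $\mathcal{C}^-(k)$ must be genuinely complex with $\mathcal{C}^-_I(k)>0$. I will rule out any real candidate: $c\leq a$ fails because $\partial_cF<0$ gives $F(c,k,\epsilon)\geq F(a,k,\epsilon)>0$, while $c>a$ fails because, under the hypothesis $(U^\pm)''>0$, formulas \eqref{E:YI-} and \eqref{E:YI+} give $Y^-_I>0$ on $U^-((-h_-,0))$ and $Y^+_I<0$ on $U^+((0,h_+))$, so
\begin{align*}
F_I(c,k,\epsilon)=-\epsilon(U^+(0)-c)^2Y^+_I(c,k)+(U^-(0)-c)^2Y^-_I(c,k)>0
\end{align*}
for real $c$ just above the endpoint $a$ (at least one of $Y^\pm_I$ being strictly of the indicated sign near $a\in\mathcal{E}$). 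Hence $\mathcal{C}^-(k)\notin\mathbb{R}$, and the $\{c_I\geq 0\}$ framework of Lemma \ref{L:F-regularity} fixes $\mathcal{C}^-_I(k)>0$; the first-order expansion $\mathcal{C}^-(k)=a+(\mathcal{C}^-)'(k_*)(k-k_*)+o(k-k_*)$ together with the established sign of $(\mathcal{C}^-_R)'(k_*)$ then gives $\mathcal{C}^-_R(k)>a$ on this side. The key subtlety lies in this last step --- namely, justifying that the unique IFT branch lies in the closed upper half plane rather than the lower, which is decided by the one-sided sign information on $F_I$ recorded above.
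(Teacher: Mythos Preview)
Your overall strategy---extend $F$ to a $C^{1,\alpha}$ function $\tilde F$ in a full complex neighborhood of $(a,k_*)$, use the Cauchy--Riemann relations to identify $D_c\tilde F(a,k_*,\epsilon)=\partial_cF(a,k_*,\epsilon)I_{2\times2}$, and apply the Implicit Function Theorem---is exactly the paper's approach. Your treatment of Case~(1) is correct (the paper uses a slightly different intermediate-value argument, but yours via $F(c,k,\epsilon)\to+\infty$ as $c\to-\infty$ works equally well).

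The gap is in the last step of Case~(2), and you correctly flag it as the ``key subtlety'' but do not actually close it. Once you have extended $F$ to $\tilde F$ on a two-sided neighborhood, the unique IFT branch $\mathcal{C}^-(k)$ is a root of $\tilde F$, not of $F$; the extension below the real axis is arbitrary, so nothing in ``the $\{c_I\ge0\}$ framework of Lemma~\ref{L:F-regularity}'' forces the branch to stay in the upper half plane. Your exclusion of real roots therefore only shows $\mathcal{C}_I^-(k)\neq0$, not $\mathcal{C}_I^-(k)>0$, and the sentence ``which is decided by the one-sided sign information on $F_I$'' is an assertion, not an argument.

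The paper fills this gap by a Mean Value Theorem computation that converts your sign information into the desired conclusion. Since $\tilde F_I(\mathcal{C}^-(k),k,\epsilon)=0$ and $\tilde F_I$ agrees with $F_I$ on the real axis, there is $\gamma$ between $0$ and $\mathcal{C}_I^-(k)$ with
\[
\mathcal{C}_I^-(k)=-\frac{F_I\bigl(\mathcal{C}_R^-(k),k,\epsilon\bigr)}{\partial_{c_I}\tilde F_I\bigl(\mathcal{C}_R^-(k)+i\gamma,k,\epsilon\bigr)}.
\]
The numerator is exactly the quantity you computed to be strictly positive when $\mathcal{C}_R^-(k)>a$ and $(U^\pm)''>0$. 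The denominator, by $C^{1,\alpha}$ continuity of the first derivatives of $\tilde F$ (which at $(a,k_*)$ agree with those of $F$), is $\partial_cF(a,k_*,\epsilon)+O(|k-k_*|^\alpha)<0$, and this holds regardless of the sign of $\gamma$ since the extension is $C^{1,\alpha}$ across the axis. Hence $\mathcal{C}_I^-(k)>0$. This is the missing link between your sign information on $F_I$ and the sign of $\mathcal{C}_I^-$.
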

\begin{proof}
Recall that $\mathcal{I}$ is defined in \eqref{E:EI-definition}. 
According to Lemma \ref{L:cite-yY-} and Lemma \ref{L:yY+}, $F_I$ is not continuous at $c\in U^-\big( (-h_-, 0]\big)\cup U^+\big([0, h_+) \big)\subset\mathbb{C}$ in general. Hence, we extend $F(c, k, \epsilon)$ by letting 
\begin{equation}\label{E:extend-F}
    \tilde{F}(c, k, \epsilon)=\tilde{F}_R(c, k, \epsilon)+i\tilde{F}_I(c, k, \epsilon)\in \mathbb{C}, \quad \tilde{F}=F \; \text{for}\; c_I\geq 0,
\end{equation}
be a $C^{1, \alpha}$ extension of $F$ into a neighborhood of $(a, k)\in \mathbb{C}\times\mathbb{R}$ for any $\alpha\in [0, 1)$ and each $\epsilon\geq 0$.   By Lemma \ref{L:F-regularity}, since Cauchy-Riemann equations hold and $F(c, k, \epsilon)\in \mathbb{R}$ for all $c<a$, the $2\times 2$ Jacobian matrix of $D_c \tilde{F}$ satisfies
\begin{align*}
    D_c \tilde{F}(a, k_*, \epsilon)=\begin{pmatrix} \partial_{c_R} \tilde{F}_R
 & \partial_{c_I}\tilde{F}_R \\ \partial_{c_R} \tilde{F}_I & \partial_{c_I} \tilde{F}_I    \end{pmatrix}\Big|_{\big(a, k_*, \epsilon \big)}=\partial_c F(a, k_*, \epsilon)I_{2\times 2}. 
\end{align*}
Since $\partial_c F(a, k_*, \epsilon)<0$, we apply the Implicit Function Theorem and obtain a $C^{1, \alpha}$ complex-valued function $\mathcal{C}^-(k)$ and $\delta>0$ for any $\alpha\in [0, 1)$ such that $\mathcal{C}^-(k_*)=a$ and $\tilde{F}(\mathcal{C}^-(k), k, \epsilon)=0$  on $[k_*-\delta, k_*+\delta]$.  We know that $F_R\in C^1$ and $\partial_{c_R} F_R(a, k_*, \epsilon)=\partial_c F(a, k_*, \epsilon)<0$. According to the Implicit Function Theorem, there exist  $\Tilde{\delta}>0$ and a $C^{1, \alpha}$ real-valued function $\tilde{\mathcal{C}}(k)$ for any $\alpha\in [0, 1)$  on $[k_*-\Tilde{\delta}, k_*+\tilde{\delta}]$ such that $\Tilde{\mathcal{C}}(k_*)=a$ and $F_R(\Tilde{\mathcal{C}}(k), k, \epsilon)=0$. Since $F$ is real-valued if $c\leq a$, the uniqueness of solutions by the Implicit Function Theorem ensures that $\mathcal{C}^-(k)=\tilde{\mathcal{C}}(k)\in \mathbb{R}$ if $\Tilde{\mathcal{C}}(k)\leq a$.  Moreover, $F\big(\mathcal{C}^-(k), k, \epsilon\big)=\tilde{F}\big(\mathcal{C}^-(k), k, \epsilon\big)=0$. 

We first consider the case of  $\partial_k F(a, k_*, \epsilon)=0$ with  $k_*>0$. The uniqueness of $k_*$ is ensured by \eqref{E:p-KK-F}. Since $\partial_{c_R} F_R(a, k_*, \epsilon)=\partial_c F(a, k_*, \epsilon)<0$ and $F(a, k, \epsilon)$ is concave in $K=k^2$, we obtain that 
\begin{align*}
    F_R(c, k_*, \epsilon)>0, \; \forall 0<a-c\ll 1, \quad F_R(a, k, \epsilon)<0, \; \forall 
    k\in \mathbb{R}^+\setminus\{k_*\}. 
\end{align*}
Then there exist $k$ near $k_*$ and $c$ near $a$ with $c<a$ such that $F_R(c, k, \epsilon)=0$. Moreover, the uniqueness of solutions by the Implicit Function Theorem ensures that $c=\tilde{\mathcal{C}}(k)$. Therefore, $\mathcal{C}^-(k)=\Tilde{\mathcal{C}}(k)<a$ for $k\neq k_*$ close to $k_*$. 

Now, we consider the case of $\partial_k F(a, k_*, \epsilon)>0$. The proof for the case of $\partial_k F(a, k_*, \epsilon)<0$ is similar. Since $\partial_c F(a, k_*, \epsilon)<0$, 
\begin{align*}
    (\mathcal{C}^-)'(k_*)=-\frac{\partial_k F(a, k_*, \epsilon)}{\partial_c F(a, k_*, \epsilon)}>0. 
\end{align*}
Hence, $\mathcal{C}_R^-(k)>a$ for $k\in (k_*, k_*+\delta]$. By the Mean Value Theorem, there exists $\gamma$ between $0$ and $\mathcal{C}_I^-(k)$ such that 
\begin{align*}
    0=F_I(\mathcal{C}^-(k), k, \epsilon)=F_I(\mathcal{C}^-_R(k), k, \epsilon)+\mathcal{C}^-_I(k)\partial_{\mathcal{C}_I}F_I\big(\mathcal{C}_R^-(k)+i\gamma, k, \epsilon\big).
\end{align*}
By the $C^{1, \alpha}$ regularity of $F$ and $\mathcal{C}^-(k)$, we obtain that 
\begin{align*}
    \mathcal{C}^-_I(k)&=-\frac{F_I\big(\mathcal{C}_R^-(k), k, \epsilon\big)}{\partial_{\mathcal{C}_I} F_I\big(\mathcal{C}_R^-(k)+i\gamma, k, \epsilon \big)}\\
    &=-\frac{Y^-_I\big(\mathcal{C}_R^-(k), k, \big)\big(U^-(0)-\mathcal{C}_R^-(k) \big)^2-\epsilon Y_I^+\big(\mathcal{C}_R^-(k), k, \big)\big(U^+(0)-\mathcal{C}_R^-(k)\big)^2}{\partial_{\mathcal{C}_I}F_I\big(\mathcal{C}_R^-(k), k, \epsilon\big)+O(|\mathcal{C}_I^-(k)|^\alpha)}\\
    &=-\frac{Y^-_I\big(\mathcal{C}_R^-(k), k, \big)\big(U^-(0)-a+O(|k-k_*|)\big)^2-\epsilon Y_I^+\big(\mathcal{C}_R^-(k), k, \big)\big(U^+(0)-a+O|k-k_*|\big)^2}{\partial_cF(a, k_*, \epsilon)+O(|k-k_*|^\alpha)}.
\end{align*}
Since $(U^\pm)''>0$, using \eqref{E:YI-} and \eqref{E:YI+}, we have that the numerator is always positive for small $\delta$. This implies that $\mathcal{C}_I^-(k)>0$ when $0<k-k_*\ll 1$.   
\end{proof}

\begin{remark}
1). If $\partial_c F(a, k_*, \epsilon)>0$, to obtain the same results above, the assumption in Lemma \ref{L:bifurcation-a}(2) should be changed to $\pm \partial_k F(a, k_*, \epsilon)<0$. 2). If $(U^\pm)''<0$, the signs in the result of Lemma \ref{L:bifurcation-a}(2) should be reversed. 
\end{remark}

We are in the position to prove Theorem \ref{T:instability fixed e}.

\noindent \textit{Proof of Theorem \ref{T:instability fixed e}}. It remains to prove the last part of Theorem \ref{T:instability fixed e}(2)(b)(ii). The rest follows directly from Lemma \ref{L:g*},  Lemma \ref{L:bifurcation-a}, and Remark \ref{R:c-pm-extend}. Suppose that $a=U^-(-h_-)\notin U^+([0, h_+])$. Then \eqref{E:F-h-0-e} shows that $g_*\geq F(U^-(-h_-), 0, \epsilon)+g>0$ if $\epsilon>0$. Similarly, one may check that $F(U^+(0), 0, \epsilon)+g>0$ by using \eqref{E: y-c out} if $a=U^+(0)\notin U^-([-h_-, 0])$ and $\epsilon>0$. In the case of $a=U^-(-h_-)=U^+(0)$, since $F(a, 0, \epsilon)+g=0$, $g_*=0$ if and only if $\partial_K F(a, 0, \epsilon)\leq 0$, where $K:=k^2$. The proof of the theorem is completed. \hfill$\square$ 
\end{section}

\begin{section}{Ocean-Air Model}\label{S:Ocean-Air Model}
In this section, we study the spectral distribution for the case of   $0\leq \epsilon \ll 1$. We shall treat the interface problem as a perturbation to the capillary-gravity water wave problem. According to Theorem \ref{T:semicircle}, all the unstable modes must have wave speed lie in a big upper semicircle \eqref{E:semicircle} if \eqref{E:semicircle assumption} is satisfied. We first locate all the possible locations of neutral limiting modes inside $[a, b]$ where $a, b$ are defined in \eqref{E:ab} as $\epsilon$ tends to zero, and then seek unstable modes nearby when $\epsilon$ is perturbed. We give a complete picture of the eigenvalue distribution for a special case of $(U^\pm)'>0, (U^\pm)''\neq 0$, and $U^-(0)<U^+(0)$.  In the following lemma, we first address the possible locations of neutral limiting mode as $\epsilon$ tends to $0$.

\begin{lemma}\label{L:nlm-e-0}
Assume that $U^\pm$ satisfy \eqref{E:U monotone}. Let $\mathcal{I}$ be defined in $\eqref{E:EI-definition}$ and $\mathcal{I}=\emptyset$.
If a sequence of unstable modes $(c_n, k_n, \epsilon_n)$ with $\epsilon_n>0$ converges to $(c_*, k_*, 0)$ as $n \rightarrow \infty$, then $c_*=\lim_{n\rightarrow \infty}c_n \in \{U^-(-h_-)\}\cup U^+([0, h_+])\setminus U^-(0)$ or
\begin{align*}
\Omega:=\{c_R+ic_I, c_R\in \mathbb{R}, c_I\in \mathbb{R}^+| (c_R-\frac{U^-(-h_-)+U^-(0)}{2})^2+c_I^2<(\frac{U^-(-h_-)-U^-(0)}{2})^2\}.
\end{align*}
\end{lemma}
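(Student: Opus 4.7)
The plan is to pass to the limit $n\to\infty$ in the dispersion relation $F(c_n,k_n,\epsilon_n)=0$ and use Howard's semicircle theorem together with the Implicit Function Theorem to localize $c_*$. By Lemma \ref{L:F-regularity}, $F$ is continuous (in fact $C^{1,\alpha}$) in $c$ on the closed upper half plane and smooth in $\epsilon\geq 0$, so the limit yields $F(c_*,k_*,0)=0$. At $\epsilon=0$ the upper-fluid terms disappear, and this reduces to the one-fluid (capillary-gravity water wave) dispersion relation for the shear flow $U^-$. The task is therefore to classify the zeros of the one-fluid relation that can arise as limits of unstable two-fluid modes.

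If $c_{*,I}>0$, then $c_*$ is an unstable wave speed for the one-fluid problem, and the energy identities used in the proof of Theorem \ref{T:semicircle} — specialized to $\epsilon=0$, so only the lower-fluid integrals remain — place $c_*$ in the closed upper semidisk based on the range of $U^-$, hence in $\Omega$. For real $c_*$ I would proceed by cases. The point $c_*=U^-(0)$ is ruled out because \eqref{E:Y--log} forces $(U^-(0)-c)^2 Y^-(c,k)\to 0$, so $F(U^-(0),k_*,0)=-g-\frac{\sigma}{\rho^-}k_*^2<0$, contradicting the limit. A real $c_*\in U^-((-h_-,0))$ is ruled out by examining imaginary parts: \eqref{E:YI-} combined with the positivity \eqref{E:y->0} and the lower bound \eqref{E:yI->0} gives $Y^-_I(c_*,k_*)\neq 0$ at a generic critical layer, whence $F_I(c_*,k_*,0)=Y^-_I(c_*,k_*)(U^-(0)-c_*)^2\neq 0$. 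A real $c_*$ lying outside $U^-([-h_-,0])\cup U^+([0,h_+])$ is ruled out by the Implicit Function Theorem: there $F$ is real-analytic, $c_*$ is a simple root of the one-fluid dispersion relation, so by Remark \ref{R:c-pm-extend} at $\epsilon=0$ one has $\partial_c F(c_*,k_*,0)\neq 0$ along the branches $c^\pm_0(k)$; hence IFT produces a real-analytic real-valued branch $c(\epsilon)$ of roots near $c_*$, and uniqueness forces $c_n=c(\epsilon_n)\in\mathbb{R}$ for large $n$, contradicting $c_{n,I}>0$. The surviving possibilities are exactly the endpoint $c_*=U^-(-h_-)$ (where the lower bound \eqref{E:yI->0} degenerates because the factor $\sinh(|x_{c_*}^-|/\sqrt{k^2+1})$ vanishes at $x_{c_*}^-=-h_-$) and every $c_*\in U^+([0,h_+])\setminus\{U^-(0)\}$ (where the $U^+$-contribution enters $F$ only through the multiplicative factor $\epsilon$ and hence disappears in the limit).

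The main technical obstacle I anticipate is the degenerate sub-case $c_*\in U^-((-h_-,0))$ with $(U^-)''(x_{c_*}^-)=0$, since then $Y^-_I(c_*,k_*)=0$ and the simple imaginary-part argument above fails; closing that gap requires either a higher-order expansion of $y^-$ around the critical layer or a Rouch\'e-type winding-number argument for $F(\cdot,k,\epsilon)$ on a small complex neighborhood of $(c_*,k_*)$, in the spirit of the counting integral \eqref{E:degree} used in Lemma \ref{L:no singular mode}. A secondary subtlety is verifying the non-vanishing $\partial_c F\ne 0$ on the branches $c^\pm_0(k_*)$ for all $k_*$ — not merely in the large-$|k|$ regime of Lemma \ref{L:c-large k} — which I expect to follow from the global real-analytic extension of these branches at $\epsilon=0$ asserted in Remark \ref{R:c-pm-extend}.
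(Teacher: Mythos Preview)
Your proposal is correct and follows essentially the same approach as the paper: pass to the limit to get $F(c_*,k_*,0)=0$, invoke the one-fluid result to restrict $c_*$ to $(\mathbb{R}\setminus U^-((-h_-,0]))\cup\Omega$, then use the Implicit Function Theorem (applied jointly in $(k,\epsilon)$, not just $\epsilon$, since $k_n\to k_*$ as well) together with \eqref{E:ext-c-pm-pcF}--\eqref{E:pcF-esp-0-c+} to rule out real $c_*$ in the gap between the ranges of $U^\pm$. The paper is more economical only in that it cites Theorem~1.1 of \cite{LZ} directly for the one-fluid restriction rather than reproving it via the semicircle and $F_I\neq 0$ arguments you sketch; the inflection-point obstacle you flag is thus absorbed into that citation.
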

\begin{proof}
Recall that $F(c, k, \epsilon)$ is defined in \eqref{E:F-definition}.  Then, Lemma \ref{L:F-regularity} implies that  $\lim_{n\rightarrow \infty}F(c_n, k_n, \epsilon_n)=F(c_*, k_*, 0)=0$. According to Theorem 1.1 in \cite{LZ}, $c_*\in (\mathbb{R}\setminus U^-((-h_-, 0]))\cup \Omega$. Since $F(c_n, k_n, \epsilon_n)=0$, Theorem \ref{T:semicircle} implies that $c_n$ stays in the upper semicircle \eqref{E:semicircle}. By the definition of $(c_n, k_n, \epsilon_n)$, for any $0<\alpha\ll 1$, there exists $N_0>0$ such that whenever $n>N_0$, $\epsilon_n, |c_n-c_*|, |k_n-k_*|<\alpha$. Hence, $c_*\in \Big([a, b]\setminus U^-\big((-h_-, 0]\big) \Big)\cup \Omega$, where $a$ and $b$ are defined in \eqref{E:ab}. We shall discuss the following possible cases to shrink this region. 

\noindent\textit{Case 1. $U^-(0)<U^+(0)$.}  Suppose that $c_*\in \big(U^-(0), U^+(0)\big)$. By Remark \ref{R:c-pm-extend}, when $\epsilon=0$, $c^+(k)$ obtained in Lemma \ref{L:c-large k} can be extended to all $k\in \mathbb{R}$.
This shows that $c_*$ is a simple root of $F(c, k_*, 0)$ and according to \eqref{E:ext-c-pm-pcF}, $\partial_c F(c_*, k_*, 0)>0$. Since $F(c, k, \epsilon)$ is analytic near $c=c*$, there exist $\delta, \epsilon_0>0$ and a smooth function $\mathcal{C}(k, \epsilon): [k_*-\delta, k_*+\delta]\times[-\epsilon_0, \epsilon_0]\rightarrow \mathbb{C}$ such that 
\begin{align*}
    F\big(\mathcal{C}(k, \epsilon), k, \epsilon\big)=0, \quad \mathcal{C}(k_*, 0)=c_*. 
\end{align*}
By the uniqueness of solutions ensured by the Implicit Function Theorem and the definition of the sequence $\{(c_n, k_n, \epsilon_n)\}$, there exists $N>0$ such that for all $n\geq N$, $\mathcal{C}(k_n, \epsilon_n)=c_n$. This implies that $\mathcal{C}_I(k_n, \epsilon_n)>0$. 

Since $c_*\notin U^-([-h_-, 0])\cup U^+([0, h_+])$, $F_R\in C^\infty$ and $\partial_{c_R}F_R(c_*, k_*, 0)=\partial_c F(c_*, k_*, 0)>0$. Then the Implicit Function Theorem implies that there exists a smooth real-valued function $\Tilde{C}(k, \epsilon)$ for $k$ near $k_*$ and $\epsilon$ near $0$ such that 
\begin{align*}
    \Tilde{C}(k_*, 0)=c_*, \quad F_R\big(\Tilde{C}(k, \epsilon), k, \epsilon\big)=0.
\end{align*}
Since $F_I(c, k, \epsilon)=0$ if $c\in \big(U^-(0), U^+(0) \big)$, the uniqueness of solutions by the Implicit Function Theorem implies that $\mathcal{C}(k, \epsilon)=\Tilde{C}(k, \epsilon)\in \mathbb{R}$ if $\Tilde{C}\in \big(U^-(0), U^+(0)\big)$. Hence, $\mathcal{C}(k_n, \epsilon_n)\in \mathbb{R}$ for all $n>N$, which contradicts to the previous argument that $\mathcal{C}_I(k_n, \epsilon_n)>0$. Therefore, $c_*\notin \big(U^-(0), U^+(0)\big)$.  \\
\textit{Case 2. $U^+(h_+)\leq U^-(-h_-)$.} It suffices to consider $U^+(h_+)<U^-(-h_-)$. According to Remark \ref{R:c-pm-extend}, $\partial_c F(c_*, k_*, 0)<0$. By a similar argument as above, the lemma is proved. 
\end{proof}

This lemma implies that instability might occur near $U^-(-h_-)$ and $U^+([0, h_+])$ for $0<\epsilon\ll 1$. We shall analyze the bifurcation near $U^+(h_+)$ and $U^+(0)$  and give a complete picture of the eigenvalue distribution when $0<\epsilon\ll 1$. The strategy can be adapted to analyze some other cases including all the cases of $(U^\pm)', (U^\pm)''\neq 0$ and $\mathcal{I}\neq \emptyset$, where $\mathcal{I}$ is defined in \eqref{E:EI-definition}. 

\begin{lemma}\label{L:bifurcation-U+h}
Suppose that $U^\pm$ satisfy \eqref{E:U monotone}, $(U^\pm)'> 0,  U^-(0)<U^+(0)$, $(U^\pm)''\neq 0$, and $F\big(U^+(h_+), k_0, 0\big)=0$ for some $k_0> 0$. Then the following hold. 
\begin{enumerate}
    \item Suppose that $\partial_k F\big(U^+(h_+), k_0, 0 \big)\neq 0$. Then there exist $\epsilon_0>0$ such that for each $\epsilon\in (0, \epsilon_0)$ there exist $\delta_0>0$, $k(\epsilon)\in C^{1, \beta}\big([0, \epsilon_0], \mathbb{R}^+\big)$, and $\mathcal{C}^+(k)\in C^{1, \alpha}\big([k(\epsilon)-\delta_0, k(\epsilon)+\delta_0] \big)$ for any $\alpha, \beta\in [0, 1)$ such that  
\begin{align*}
    F\big(U^+(h_+),\pm k(\epsilon), \epsilon \big)=0,& \quad k(0)=k_0, \\
    F\big(\mathcal{C}^+(k), \pm k, \epsilon\big)=0, \; \forall|k|\in [k(\epsilon)-\delta_0,\;& k(\epsilon)+\delta_0], \quad \mathcal{C}^+\big(\pm k(\epsilon)\big)=U^+(h_+). 
\end{align*}
And for $c_I\in [0, U^+(h_+)-U^+(0)]$ and $|c_R-U^+(h_+)|\leq U^+(h_+)-U^+(0)$, $\mathcal{C}^+(k)$ is the unique root of $F(\cdot, k, \epsilon)=0$ with $|k|\in [k(\epsilon)-\delta_0, k(\epsilon)+\delta_0]$. 
Moreover, the following hold.
\begin{enumerate}
    \item If  $\partial_k F\big( U^+(h_+), k_0, 0\big)<0$, then 
    \begin{enumerate}
        \item  $k'(0)>0$ and $k(\epsilon)>k_0$;
        \item $\mathcal{C}^+(k)>U^+(h_+)$ for any $|k|\in \big(k(\epsilon), k(\epsilon)+\delta_0]$ and 
        \begin{align*}
         (U^+)''\mathcal{C}^+_I(k)<0,\quad  \mathcal{C}_R^+(k)<U^+(h_+),\;  \forall |k|\in [k(\epsilon)-\delta_0, k(\epsilon)\big).
\end{align*}
\end{enumerate}
    \item If  $\partial_k F\big(U^+(h_+), k_0, 0\big)>0$, then 
    \begin{enumerate}
        \item $k'(0)<0$ and $k(\epsilon)<k_0$;
        \item $\mathcal{C}^+(k)>U^+(h_+)$ for any $ |k|\in [k(\epsilon)-\delta_0, k(\epsilon)\big)$ and 
        \begin{align*}
         (U^+)''\mathcal{C}^+_I(k)<0,  \quad \mathcal{C}_R^+(k)<U^+(h_+),\; |k|\in \big( k(\epsilon), k(\epsilon)+\delta_0].
        \end{align*}
    \end{enumerate}
\end{enumerate}
\item Suppose that $\partial_k F\big(U^+(h_+), k_0, 0\big)=0$. Then there exist $\epsilon_1>0$ such that for each $\epsilon\in (0, \epsilon_1]$ there exist $k_2(\epsilon)>k_0>k_1(\epsilon)\geq 0$, $\delta>0$, and $\mathcal{C}_{1, 2}^+\in C^{1, \alpha}\big([k_{1, 2}(\epsilon)-\delta, k_{1, 2}(\epsilon)+\delta], \mathbb{C}\big)$ for any $\alpha\in [0, 1)$ such that 
    \begin{align*}
        &F\big(U^+(h_+), \pm k_{1, 2}(\epsilon), \epsilon\big)=0,\\ 
        F\big(\mathcal{C}_{1, 2}^+(k), \pm k, \epsilon\big)=0, \; \forall |k|\in &[k_{1, 2}(\epsilon)-\delta, k_{1, 2}(\epsilon)+\delta], \quad \mathcal{C}^+_{1, 2}\big(\pm k_{1, 2}(\epsilon)\big)=U^+(h_+). 
    \end{align*}
Moreover, $\mathcal{C}_{1, 2}^+(k)$ satisfy that 
\begin{align*}
    \mathcal{C}_1^+(k)>U^+(h_+), \; \forall |k|\in [k_1(\epsilon)-\delta,  k_1(\epsilon)\big),& \quad \mathcal{C}_2^+(k)>U^+(h_+), \; \forall |k|\in \big(k_2(\epsilon), k_2(\epsilon)+\delta], \\
    \mathcal{C}_{1 _R}^+(k)<U^+(h_+), \quad (U^+)''&\mathcal{C}_{1 _I}^+(k)<0, \; \forall |k|\in \big(k_1(\epsilon), k_1(\epsilon)+\delta],\\
    \mathcal{C}_{2 _R}^+(k)<U^+(h_+), \quad (U^+)''&\mathcal{C}_{2 _I}^+(k)<0, \; \forall |k|\in [k_2(\epsilon)-\delta, k_2(\epsilon)\big).
\end{align*}
In addition, for $|c_R-U^+(h_+)|<U^+(h_+)-U^+(0)$ and $c_I\in [0, U^+(h_+)-U^+(0)]$, $\mathcal{C}^+_1$ is the unique root of $F(\cdot, k, \epsilon)=0$ with $|k|\in [k_1(\epsilon)-\delta, k_1(\epsilon)+\delta]$, $\mathcal{C}^+_2$ is the unique root of $F(\cdot, k, \epsilon)=0$ with $|k|\in [k_2(\epsilon)-\delta, k_2(\epsilon)+\delta]$. 
\end{enumerate}
\end{lemma}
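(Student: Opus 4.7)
The plan is to mirror the structure of the proof of Lemma \ref{L:bifurcation-a}: extend $F(c,k,\epsilon)$ to a $C^{1,\alpha}$ function $\tilde F$ in a complex neighborhood of $c=U^+(h_+)$ as in \eqref{E:extend-F}, then invoke the Implicit Function Theorem twice---first to solve the real scalar equation $F(U^+(h_+),k,\epsilon)=0$ for the critical wave number(s) as a function of $\epsilon$, then to solve $\tilde F(c,k,\epsilon)=0$ for $c$ as a function of $k$ near each critical pair. The sign of $\mathcal{C}^+_I$ will follow from a Mean Value Theorem computation identical to the one at the end of the proof of Lemma \ref{L:bifurcation-a}, using the explicit formula \eqref{E:YI+} for $Y^+_I$. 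Everything rests on two positivity facts at the base point $(c,k,\epsilon)=(U^+(h_+),k_0,0)$: namely $\partial_c F>0$ and $\partial_\epsilon F>0$. For the first, the strict ordering $\max U^-=U^-(0)<U^+(0)\leq U^+(h_+)$ places $c=U^+(h_+)$ outside $U^-([-h_-,0])$, so $Y^-$ and $\partial_c Y^-$ are smooth there, \eqref{E:partial-c-F} gives a regular expression for $\partial_c F$ at $\epsilon=0$, and continuity from $\partial_c F(c^+(k),k,0)>0$ in Remark \ref{R:c-pm-extend} yields $\partial_c F\geq 0$ at the base point, with strict positivity read off from \eqref{E:Y-k-0outU-}. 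For the second, differentiating \eqref{E:F-definition-up} gives $\partial_\epsilon F=(U^+)'(0)(U^+(0)-c)-Y^+(U^+(0)-c)^2+g$; the monotonicity $\partial_K Y^+<0$ from \eqref{E:partial-k-Y+} reduces positivity to the $k=0$ case, where \eqref{E:Y+h+} gives $(U^+(0)-c)Y^+|_{c=U^+(h_+),k=0}=(U^+)'(0)$ and hence $\partial_\epsilon F|_{k=0,c=U^+(h_+)}=g>0$.

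For part (1), since $\partial_k F(U^+(h_+),k_0,0)\neq 0$ the IFT applied to the real scalar function $(k,\epsilon)\mapsto F(U^+(h_+),k,\epsilon)$ produces a $C^{1,\beta}$ curve $k(\epsilon)$ with $k(0)=k_0$ and $k'(0)=-\partial_\epsilon F/\partial_k F$, whose sign is determined (via $\partial_\epsilon F>0$) by that of $\partial_k F$, yielding (1a)(i) and (1b)(i). Then, as in the proof of Lemma \ref{L:bifurcation-a}, the $2\times 2$ Jacobian $D_c\tilde F(U^+(h_+),k(\epsilon),\epsilon)=\partial_c F\cdot I_{2\times 2}$ is invertible, so the IFT furnishes a $C^{1,\alpha}$ complex-valued $\mathcal{C}^+(k)$ with $\mathcal{C}^+(\pm k(\epsilon))=U^+(h_+)$ and $(\mathcal{C}^+)'(k(\epsilon))=-\partial_k F/\partial_c F$. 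This slope dictates on which side of $k(\epsilon)$ the branch exits to $>U^+(h_+)$ along the real axis and on which side it enters the range of $U^+$. On the latter side $\mathcal{C}^+_R\in U^+([0,h_+))$ but remains outside $U^-([-h_-,0])$, so $Y^-_I(\mathcal{C}^+_R,k)=0$ while \eqref{E:YI+} gives $\mathrm{sgn}\,Y^+_I(\mathcal{C}^+_R,k)=-\mathrm{sgn}((U^+)'')$. Applying the MVT identity from the proof of Lemma \ref{L:bifurcation-a},
\begin{equation*}
\mathcal{C}^+_I(k)=\frac{\epsilon\, Y^+_I(\mathcal{C}^+_R,k)\bigl(U^+(0)-\mathcal{C}^+_R\bigr)^2}{\partial_c F(U^+(h_+),k_0,0)+O(|k-k(\epsilon)|^\alpha)},
\end{equation*}
whose sign equals $-\mathrm{sgn}((U^+)'')$, so $(U^+)''\mathcal{C}^+_I<0$ as claimed.

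For part (2), $\partial_k F(U^+(h_+),k_0,0)=0$ combined with $\partial_{KK}F<0$ from Lemma \ref{L:p-KK-F} makes $K_0=k_0^2$ a nondegenerate local maximum of $K\mapsto F(U^+(h_+),\sqrt{K},0)$. Perturbation by $\epsilon\,\partial_\epsilon F>0$ raises this maximum strictly above zero, so a quadratic Taylor expansion in $K$ together with the IFT (equivalently, Weierstrass Preparation) supplies two simple real roots $K_1(\epsilon)<K_0<K_2(\epsilon)$ of $F(U^+(h_+),\sqrt{K},\epsilon)=0$ with $\partial_K F$ of opposite signs at $K_1$ and $K_2$. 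Setting $k_i(\epsilon)=\sqrt{K_i(\epsilon)}$ and applying the case-(1) analysis at each $(U^+(h_+),k_i(\epsilon),\epsilon)$ produces the branches $\mathcal{C}^+_{1,2}$ with all asserted sign and location properties. Uniqueness of $\mathcal{C}^+$ (and of each $\mathcal{C}^+_i$) in the prescribed box follows from the local IFT uniqueness together with a winding-number argument based on \eqref{E:degree} as in Case 1 of the proof of Lemma \ref{L:no singular mode}, using that at $\epsilon=0$ the only root of $F(\cdot,k,0)=0$ in this box is the one-fluid branch $c^+(k)$, which persists under small $\epsilon$-perturbation. The main technical obstacle I foresee is the unconditional positivity $\partial_\epsilon F(U^+(h_+),k_0,0)>0$, since $Y^+(U^+(h_+),k_0)$ admits no closed-form expression for $k_0\neq 0$; the outlined reduction via the $K$-monotonicity of $Y^+$ to the explicit case $k=0$ is the crucial step and depends squarely on the concavity established in Lemma \ref{L:p-KK-F}.
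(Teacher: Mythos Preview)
Your proposal is correct and follows essentially the same route as the paper: first the Implicit Function Theorem in $(k,\epsilon)$ to track the neutral wave number $k(\epsilon)$ (using $\partial_\epsilon F>0$, established via $\partial_{\epsilon K}F=-\partial_K Y^+(U^+(0)-c)^2>0$ and the explicit value $\partial_\epsilon F|_{k=0}=g$), then the Implicit Function Theorem in $c$ on the $C^{1,\alpha}$ extension $\tilde F$ (using $\partial_c F>0$), and finally the Mean Value Theorem together with \eqref{E:YI+} to determine the sign of $\mathcal{C}^+_I$. For part (2), the paper solves for $\epsilon=\Gamma(K)$ via the IFT and reads off two roots from convexity of $\Gamma$, which is equivalent to your quadratic--expansion/Weierstrass picture.

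One small correction: your appeal to \eqref{E:Y-k-0outU-} for strict positivity of $\partial_c F$ at $(U^+(h_+),k_0,0)$ is misplaced, since that formula is for $k=0$ only. The strict inequality comes directly from \eqref{E:pcF-esp-0-c+} in Remark \ref{R:c-pm-extend}: since $U^+(h_+)>U^-(0)$ and $F(U^+(h_+),k_0,0)=0$, the point lies on the extended one-fluid branch $c_0^+(k)$, along which $\partial_c F>0$ strictly.
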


\begin{proof}
Let $K:=k^2$.  We shall prove the case of $\partial_K F\big(U^+(h_+), k_0, 0\big)<0$. The proof for the case of  $\partial_K F\big(U^+(h_+), k_0, 0\big)>0$ is similar. Since $\partial_k F\big(U^+(h_+), k_0, 0\big)<0$ and $k_0>0$, $\partial_K F\big(U^+(h_+), k_0, 0\big)<0$. And there exist $0<\epsilon_0\ll 1$ and a real-valued smooth function $K(\epsilon)$ such that $F\big(U^+(h_+), K(\epsilon), \epsilon\big)=0$ for all $\epsilon\in [-\epsilon_0, \epsilon_0]$ and $K(0)=k_0^2$. We compute that 
\begin{equation}\label{E:p-ep-F}
    \partial_\epsilon F(c, k, \epsilon)=-\big(U^+(0)-c\big)^2Y^+(c, k)+(U^+)'(0)\big(U^+(0)-c\big)+g. 
\end{equation}
We apply \eqref{E:Y+h+} to obtain that 
\begin{equation}\label{E:p-ep-F-h+-0}
    \partial_\epsilon F\big(U^+(h_+), 0, 0\big)=g>0
\end{equation}
According to \eqref{E:partial-k-Y+}, \begin{align*}
    \partial_{\epsilon K}F\big(U^+(h_+), k, \epsilon\big)=-\big(U^+(0)-U^+(h_+) \big)^2\partial_K Y^+\big( U^+(h_+), k\big)>0,
\end{align*} 
for all $k\in \mathbb{R}$ and $\epsilon\geq 0$. 
Hence, $\partial_\epsilon F\big( U^+(h_+), k_0, 0\big)>\partial_\epsilon F\big(U^+(h_+), 0, 0\big)>0$. 
Using the assumption that $\partial_K F\big(U^+(h_+), k_0, 0\big)<0$, we obtain that
\begin{align*}
    K'(0)=-\frac{\partial_\epsilon F}{\partial_K F}\big(U^+(h_+), k_0, 0 \big)>0.
\end{align*}
Then $\epsilon_0$ can be chosen small enough such that  $K'(\epsilon)>0$ for all $\epsilon\in [0, \epsilon_0]$.  

For each $\epsilon\in (0, \epsilon_0]$, we consider $F\big(U^+(h_+), k(\epsilon), \epsilon \big)$, where $k(\epsilon)=\sqrt{K(\epsilon)}$. We extend $F(c, k, \epsilon)$ by letting $\Tilde{F}$ be defined in \eqref{E:extend-F}. Then $\Tilde{F}$ is a $C^{1, \alpha}$ extension of $F$ into a neighborhood of $\big(U^+(h_+), k(\epsilon)\big)\in \mathbb{C}\times\mathbb{R}$ for any $\alpha\in [0, 1)$. Since $F(c, k, \epsilon)\in \mathbb{R}$ for all $c\geq U^+(h_+)$, we use Cauchy-Riemann equations to compute 
\begin{equation}\label{E:Dc-tilde-F}
    D_c \tilde{F}\big(U^+(h_+), k(\epsilon), \epsilon)=\begin{pmatrix} \partial_{c_R} \tilde{F}_R
 & \partial_{c_I}\tilde{F}_R \\ \partial_{c_R} \tilde{F}_I & \partial_{c_I} \tilde{F}_I    \end{pmatrix}\Big|_{\big(U^+(h_+), k(\epsilon), \epsilon \big)}=\partial_c F\big(U^+(h_+), k(\epsilon), \epsilon \big)I_{2\times 2}. 
\end{equation}
We apply the Mean Value Theorem and the smoothness of $K(\epsilon)$ to obtain that 
\begin{align*}
    \partial_cF\big(U^+(h_+), k(\epsilon), \epsilon\big)=& \partial_c F\big(U^+(h_+), k(\epsilon), 0\big)+\partial_{c\epsilon} F\big(U^+(h_+), k(\epsilon), \theta_1\big)\epsilon\\
    =& \partial_c F\big(U^+(h_+), k_0, 0\big) +\partial_{Kc} F\big(U^+(h_+), k_0+\theta_2, 0\big)\big(K(\epsilon)-k_0^2\big)+O(\epsilon)\\
    =& \partial_c F\big(U^+(h_+), k_0, 0\big) +O(\epsilon), 
\end{align*}
where $\theta_1\in (0, \epsilon)$ and $\theta_2\in \big(0, k(\epsilon)-k_0\big)$. By \eqref{E:pcF-esp-0-c+}, $\partial_c F\big(U^+(h_+), k_0, 0\big)>0$. Since $\epsilon_0 \ll 1$, 
\begin{equation}\label{E:pcF-h+-ep}
    \partial_c F\big(U^+(h_+), k(\epsilon), \epsilon\big)>0,  \quad \forall \epsilon\in [0, \epsilon_0]. 
\end{equation}
Therefore, there exist $0<\beta\ll 1$ and a $C^{1, \alpha}$ function $\mathcal{C}^+(k):[k(\epsilon)-\beta, k(\epsilon)+\beta]\rightarrow \mathbb{C}$ for any $\alpha\in [0, 1)$ such that 
\begin{align*}
    \Tilde{F}\big(\mathcal{C}^+(k), k, \epsilon\big)=0, \quad  \mathcal{C}^+\big(k(\epsilon)\big)=U^+(h_+), 
\end{align*}
\begin{equation}\label{E:cR+-l-U-}
     \mathcal{C}_R^+(k)> U^-(0).
\end{equation}
Since $F_R\in C^1$ and $\partial_{c_R}F_R\big(U^+(h_+), k(\epsilon), \epsilon\big)=\partial_c F\big(U^+(h_+), k(\epsilon), \epsilon \big)>0$, there is a $C^1$ real-valued function $\Tilde{C}(k)$ for $k$ near $k(\epsilon)$ such that 
\begin{align*}
    \Tilde{C}\big(k(\epsilon)\big)=U^+(h_+), \quad F_R\big(\Tilde{C}(k), k, \epsilon\big)=0.
\end{align*}
Since $F_I(c, k, \epsilon)=0$ for all $c\geq U^+(h_+)$, the uniqueness of solutions by the Implicit Function Theorem ensure that $\Tilde{C}(k)=\mathcal{C}^+(k)\in \mathbb{R}$ if $\Tilde{C}\geq U^+(h_+)$. 

By the Mean Value Theorem and smoothness of $K(\epsilon)$, we use the smallness of $\epsilon_0$ to obtain that 
\begin{equation}\label{E:pKF-h+kep-ep}
    \partial_K F\big( U^+(h_+), k(\epsilon), \epsilon\big) =\partial_K F\big(U^+(h_+), k_0, 0\big)+O(\epsilon)<0.
\end{equation}
Hence, we compute 
\begin{align*}
    \Tilde{C}'\big(k(\epsilon)\big)=(\mathcal{C}^+)'\big(k(\epsilon) \big)=-\frac{2k(\epsilon)\partial_K F}{\partial_c F}\big(U^+(h_+), k(\epsilon), \epsilon\big)>0. 
\end{align*}
Hence, $\Tilde{C}(k)=\mathcal{C}^+(k)>U^+(h_+)$ for $k$ slightly larger than $k(\epsilon)$. When $k\in [k(\epsilon)-\beta, k(\epsilon)\big)$, $\mathcal{C}^+_R(k)<U^+(h_+)$. In this case, we consider 
\begin{align*}
    0=\tilde{F}_I\big(\mathcal{C}^+(k), k, \epsilon \big)=F_I\big(\mathcal{C}_R^+(k), k, \epsilon \big)+\mathcal{C}_I^+(k)\partial_{c_I}\Tilde{F}_I\big(\mathcal{C}_R^+(k)+i\theta, k, \epsilon \big),
\end{align*}
where $|\theta|\in \big(0, \mathcal{C}_I^+(k)\big)$. then 
\begin{align*}
    \mathcal{C}_I^+(k)=&-\frac{F_I\big(\mathcal{C}_R^+(k), k, \epsilon\big)}{\partial_{c_I}\Tilde{F}_I\big(\mathcal{C}_R^+(k)+i\theta, k, \epsilon\big)}\\
    =& - \frac{\big(U^-(0)-\mathcal{C}_R^+(k)\big)^2Y_I^-\big(C_R^+(k), k \big)-\epsilon\big(U^+(0)-\mathcal{C}_R^+(k)\big)^2Y_I^+\big(\mathcal{C}_R^+(k), k\big)}{\partial_{c_I}\Tilde{F}_I\big(\mathcal{C}_R^+(k), k, \epsilon\big)+O\big(|\mathcal{C}_I^+(k)|^\alpha\big)+O(\epsilon)}.
\end{align*}
By \eqref{E:YI-}, \eqref{E:cR+-l-U-}, and \eqref{E:YI+} $Y_I^-\big(C_R^+(k), k \big)=0$. Hence, 
\begin{align*}
    \mathcal{C}_I^+(k)=&\frac{\epsilon\big(U^+(0)-\mathcal{C}_R^+(k)\big)^2Y_I^+\big(\mathcal{C}_R^+(k), k\big)}{\partial_{c_I}F_I\big(U^+(h_+), k(\epsilon), \epsilon\big)+O\big( |k-k(\epsilon)|^\alpha\big)}\\
    =& \frac{\epsilon\big(U^+(0)-U^+(h_+)+O(|k-k(\epsilon)|)\big)^2Y_I^+\big(\mathcal{C}_R^+(k), k\big)}{\partial_c F\big(U^+(h_+), k(\epsilon), \epsilon\big)+O\big( |k-k(\epsilon)|^\alpha\big)}.
\end{align*}
By \eqref{E:YI+}, the statement (1) is proved. 

Now we consider the case of $\partial_k F\big(U^+(h_+), k_0, \epsilon\big)=0$. Since $\partial_\epsilon F\big(U^+(h_+), k_0, \epsilon\big)>0$, there exist $0<\delta\ll 1$ and $\Gamma(k)\in C^\infty([k_0-\delta, k_0+\delta], \mathbb{R})$ such that 
\begin{align*}
    F\big(U^+(h_+), k, \Gamma(k)\big)=0, \; \Gamma(k_0)=0. 
\end{align*}
Since $\partial_K F\big(U^+(h_+), k_0, \epsilon\big)=0$, we use \eqref{E:p-KK-F} to obtain that, 
\begin{align*}
    \partial_{KK}\Gamma(k_0)=-\frac{\partial_{KK}F}{\partial_\epsilon F}\big(U^+(h_+), k_0, \epsilon\big)>0. 
\end{align*}
Hence, $\Gamma(k)\geq 0$ on $[k_0-\delta, k_0+\delta]$. Let $\epsilon_1:=\sup_{|k|\in [k_0-\delta, k_0+\delta]}\Gamma(k)$.  Then for each $\epsilon\in (0, \epsilon_1)$, there exist $k_2(\epsilon)>k_0>k_1(\epsilon)> 0$ such that 
\begin{align*}
    F\big(U^+(h_+), k_2(\epsilon), \epsilon\big)=F\big(U^+(h_+), k_1(\epsilon), \epsilon\big)=0. 
\end{align*}
Thanks to \eqref{E:p-KK-F}, we have 
\begin{align*}
    \partial_K F\big(U^+(h_+), k_2(\epsilon), \epsilon\big)<0, \quad \partial_K F\big(U^+(h_+), k_1(\epsilon), \epsilon\big)>0. 
\end{align*}
For such $\epsilon$, we apply the  bifurcation analysis  near $(U^+(h_+), k_1(\epsilon), \epsilon\big)$ and $(U^+(h_+), k_2(\epsilon), \epsilon\big)$ respectively, which is similar as the proof for the case of $\partial_K F\big(U^+(h_+), k_0, \epsilon \big)<0$. The proof is completed. 
\end{proof}

When $U^+(h_+)\notin U^-([[-h_-, 0])$ is the maximum speed of both fluids, we also discuss about the possible bifurcation near $c=U^+(h_+)$ if $\big(U^+(h_+), 0\big)$ is a neutral mode in the capillary gravity water wave problem.  

\begin{lemma}\label{L:bifurcation-U+h-k=0}
Suppose that $U^\pm$ satisfy \eqref{E:U monotone}, $U^-(0)<U^+(0)$,  $(U^\pm)'> 0$, $(U^+)''\neq 0$, and $F\big(U^+(h_+), 0, 0 \big)=0$. Then the following hold. 
\begin{enumerate}
    \item There exist $\epsilon_0>0$  and  $\Bar{C}(\epsilon)\in C^{1, \alpha}([0, \epsilon_0], \mathbb{C})$ for any $\alpha\in [0, 1)$ such that
\begin{align*}
    F\big(\Bar{C}(\epsilon), 0, \epsilon\big) =0, \; \forall \epsilon\in [0, \epsilon_0], \quad \Bar{C}(0)=U^+(h_+). 
\end{align*}
Moreover, $\Bar{C}$ satisfies that 
\begin{align*}
    \Bar{C}_R(\epsilon)<U^+(h_+),  \quad (U^+)''\Bar{C}_I(\epsilon)<0,\; \forall \epsilon\in (0, \epsilon_0]. 
\end{align*}
\item Let $K:=k^2$. If $\partial_K F\big(U^+(h_+), 0, 0\big)\leq 0$, then there exist $\epsilon_1>0$ and $k(\epsilon)\in C^\infty([0, \epsilon_1], \mathbb{R}^+\cup \{0\})$ such that for each $\epsilon\in (0, \epsilon_1]$, 
\begin{align*}
    F\big(U^+(h_+), \pm k(\epsilon), \epsilon\big)=0, \quad k(\epsilon)>0,\; \text{and}\; k(0)=0.
\end{align*}
Moreover, there exist $\delta_0>0$ and $\mathcal{C}^+\in C^{1, \beta}([k(\epsilon)-\delta_0, k(\epsilon)+\delta_0], \mathbb{C})$ for any $\beta\in [0, 1)$ such that 
\begin{align*}
    F\big(\mathcal{C}^+(k), \pm k, \epsilon\big)=0, \quad \mathcal{C}^+\big( k(\epsilon)\big)=U^+(h_+),&\quad
    \mathcal{C}^+(k)>U^+(h_+), \; \forall |k|\in \big(k(\epsilon), k(\epsilon)+\delta_0], \\
    (U^+)''\mathcal{C}_I^+(k)<0, \quad  \mathcal{C}_R^+(k)<U^+&(h_+), \; \forall |k|\in [k(\epsilon)-\delta_0, k(\epsilon)\big).
\end{align*}
In addition, for $c_I\in [0, U^+(h_+)-U^+(0)]$, $|c_R-U^+(h_+)|<U^+(h_+)-U^+(0)$, $\mathcal{C}^+(k)$ is the only root of $F(\cdot, k, \epsilon)=0$ with $|k|\in [k(\epsilon)-\delta_0, k(\epsilon)+\delta_0]$. 
\end{enumerate}
\end{lemma}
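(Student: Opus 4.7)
\textbf{Proof proposal for Lemma \ref{L:bifurcation-U+h-k=0}.}
The plan is to mimic the strategy of Lemmas \ref{L:bifurcation-a} and \ref{L:bifurcation-U+h}: extend $F$ to a $C^{1,\alpha}$ function in a full complex neighborhood of $\bigl(U^+(h_+),0\bigr)$ via $\tilde F$ as in \eqref{E:extend-F}, compute the two key derivatives at this degenerate point, and feed them into the Implicit Function Theorem, together with a Mean Value Theorem argument for the imaginary part. The two derivatives we need are $\partial_\epsilon F\bigl(U^+(h_+),0,0\bigr)=g>0$, which follows from \eqref{E:p-ep-F} combined with \eqref{E:Y+h+} as in \eqref{E:p-ep-F-h+-0}, and $\partial_c F\bigl(U^+(h_+),0,0\bigr)>0$, which comes from \eqref{E:pcF-esp-0-c+} (since $U^+(h_+)=c^+(0)$ is the right branch wave speed at $k=0$ when $\epsilon=0$, using that $U^+(h_+)=b$ lies in the regular domain of $Y^\pm$). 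The monotonicity $\partial_{KK}F<0$ from \eqref{E:p-KK-F} will be the other quantitative input.

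\emph{Proof of (1).} With $\partial_c F>0$ at the base point, apply the Implicit Function Theorem to $\tilde F(c,0,\epsilon)=0$ in $(c,\epsilon)$ to obtain $\bar C(\epsilon)\in C^{1,\alpha}$ with $\bar C(0)=U^+(h_+)$ and $\tilde F(\bar C(\epsilon),0,\epsilon)=0$. Since $F$ is real-valued on $c\geq U^+(h_+)$, the same IFT applied to $F_R$ gives a real branch $\widetilde C(\epsilon)$; by uniqueness these branches coincide whenever $\bar C(\epsilon)\geq U^+(h_+)$, hence we only need to rule this out. Expanding $F\bigl(U^+(h_+),0,\epsilon\bigr)=\epsilon g+O(\epsilon^2)>0$ together with $\partial_c F>0$ forces $\bar C_R(\epsilon)<U^+(h_+)$. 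For the imaginary part, apply the MVT to $\tilde F_I\bigl(\bar C(\epsilon),0,\epsilon\bigr)=0$ exactly as in the end of the proof of Lemma \ref{L:bifurcation-a}: since $\bar C_R(\epsilon)>U^-(0)$ (because $U^-(0)<U^+(0)<U^+(h_+)$), we have $Y_I^-(\bar C_R,0)=0$, so the numerator reduces to $-\epsilon Y_I^+(\bar C_R,0)(U^+(0)-\bar C_R)^2$. The explicit formula \eqref{E:YI+} gives $\operatorname{sgn}(Y_I^+)=-\operatorname{sgn}((U^+)'')$, and combined with $\partial_c F>0$ in the denominator this yields $\operatorname{sgn}(\bar C_I)=-\operatorname{sgn}((U^+)'')$, i.e. $(U^+)''\bar C_I<0$.

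\emph{Proof of (2).} Now use $\partial_\epsilon F>0$ to solve $F\bigl(U^+(h_+),K,\epsilon\bigr)=0$ for $\epsilon=\Gamma(K)$ smoothly near $K=0$, with $\Gamma(0)=0$ and $\Gamma'(0)=-\partial_K F/\partial_\epsilon F\geq 0$ by the standing hypothesis. Because $\Gamma''(0)=-\partial_{KK}F/\partial_\epsilon F>0$ via \eqref{E:p-KK-F}, we have $\Gamma(K)>0$ for all small $K>0$, so $\Gamma$ admits a (left) inverse $K(\epsilon)$ with $K(\epsilon)>0$ for $\epsilon>0$ small and $K(0)=0$; set $k(\epsilon):=\sqrt{K(\epsilon)}$. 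Using concavity of $F$ in $K$ and the fact that $F(U^+(h_+),0,\epsilon)=\epsilon g+O(\epsilon^2)>0$ together with $F\to-\infty$ as $K\to\infty$, $k(\epsilon)$ is the unique positive root and $\partial_K F\bigl(U^+(h_+),k(\epsilon),\epsilon\bigr)<0$ strictly, hence $\partial_k F<0$ at that point. A continuity argument from $\epsilon=0$ preserves $\partial_c F\bigl(U^+(h_+),k(\epsilon),\epsilon\bigr)>0$. At this stage the situation is identical to Lemma \ref{L:bifurcation-U+h}(1)(a): the IFT produces $\mathcal C^+(k)\in C^{1,\alpha}$ near $k(\epsilon)$, the computation $(\mathcal C^+)'(k(\epsilon))=-2k(\epsilon)\partial_K F/\partial_c F>0$ gives $\mathcal C^+(k)>U^+(h_+)$ for $k$ just above $k(\epsilon)$, and for $k<k(\epsilon)$ a Mean Value Theorem expansion of $F_I$, identical to the one in part (1), yields $\mathcal C^+_R(k)<U^+(h_+)$ and the sign $(U^+)''\mathcal C^+_I(k)<0$ (using again that $\mathcal C^+_R>U^-(0)$ makes $Y_I^-$ vanish so only the $Y_I^+$ term survives).

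\emph{Uniqueness and main obstacle.} The local uniqueness of $\mathcal C^+$ within the stated box $\{|c_R-U^+(h_+)|<U^+(h_+)-U^+(0),\ c_I\in[0,U^+(h_+)-U^+(0)]\}$ follows from the IFT if we first verify that no other root can accumulate there; this is done by shrinking $\delta_0$ and $\epsilon_1$ and invoking $\partial_c F>0$ uniformly on a neighborhood of $(U^+(h_+),k(\epsilon))$, so $F_R$ is strictly monotone in $c_R$ on the box while $F_I$ is controlled by the MVT expression above. The main technical obstacle I anticipate is handling the degenerate case $\partial_K F(U^+(h_+),0,0)=0$: there $k(\epsilon)\sim \epsilon^{1/4}$ and is not smooth at $\epsilon=0$, so the asserted regularity of $k(\epsilon)$ should be interpreted on the level of $K(\epsilon)$ (which is $C^\infty$ by the analytic IFT applied to $\Gamma$), and the bifurcation analysis near $k(\epsilon)$ must be done carefully for each fixed $\epsilon>0$ rather than uniformly up to $\epsilon=0$. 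The other, milder technical point is to ensure the $C^{1,\alpha}$ extension $\tilde F$ exists across $c=U^+(h_+)$; this uses Lemma \ref{L:F-regularity}(2) together with the fact that $Y^+$ and $Y^-$ are smooth near $c=U^+(h_+)$ in this configuration, since $U^+(h_+)>U^+(0)>U^-(0)$ and $U^+(h_+)\notin U^-([-h_-,0])$ when $\bar C_R$ stays above $U^-(0)$.
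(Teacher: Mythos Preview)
Your proposal is correct and follows essentially the same approach as the paper: extend $F$ to $\tilde F$, use $\partial_c F\bigl(U^+(h_+),0,0\bigr)>0$ from \eqref{E:pcF-esp-0-c+} together with $\partial_\epsilon F\bigl(U^+(h_+),0,0\bigr)=g>0$ from \eqref{E:p-ep-F-h+-0} to run the Implicit Function Theorem in $(c,\epsilon)$ for part~(1) and in $(K,\epsilon)$ via $\Gamma$ for part~(2), then finish with the Mean Value Theorem expansion of $F_I$ exactly as in Lemma~\ref{L:bifurcation-U+h}. Your concavity argument for $\partial_K F\bigl(U^+(h_+),k(\epsilon),\epsilon\bigr)<0$ (positive value at $K=0$, concave in $K$, tends to $-\infty$) is in fact slightly cleaner than the paper's direct estimate, and your flagging of the $k(\epsilon)\sim\epsilon^{1/4}$ regularity issue in the degenerate case is apt: the paper, like you, works at the level of $K(\epsilon)=\Gamma^{-1}(\epsilon)$ for each fixed $\epsilon>0$ without claiming smoothness of $k$ at $\epsilon=0$.
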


\begin{proof}
We extend $F(c, k, \epsilon)$ into a neighborhood of $\big(U^+(h_+), 0\big)\in \mathbb{C}\times \mathbb{R}$ by letting $\Tilde{F}$ be defined in \eqref{E:extend-F}. Using Cauchy-Riemann equations, $D_c\Tilde{F}\big(U^+(h_+), 0, 0\big)$ is in the form of \eqref{E:Dc-tilde-F}. By \eqref{E:pcF-esp-0-c+}, $\partial_c F\big(U^+(h_+), 0, 0\big)>0$. Hence, there exist $0<\epsilon_0\ll 1$ and a $C^{1, \alpha}$ function $\Bar{C}(\epsilon):[-\epsilon_0, \epsilon_0]\rightarrow \mathbb{C}$ for any $\alpha\in [0, 1)$ such that 
\begin{align*}
    \Tilde{F}\big(\Bar{C}(\epsilon), 0, \epsilon\big)=0, \quad \Bar{C}(0)=U^+(h_+).
\end{align*}
By a similar argument as the proof in Lemma \ref{L:bifurcation-U+h}, we obtain that, $F\big( \Bar{C}(\epsilon), 0,\epsilon\big)=\Tilde{F}\big(\Bar{C}(\epsilon), 0, \epsilon\big)$. Using \eqref{E:p-ep-F-h+-0}, we obtain that $(\Bar{C})'(0)=-\frac{\partial_\epsilon F}{\partial_c F}\big(U^+(h_+), 0, 0\big)<0$. Then, for each $\epsilon\in (0, \epsilon_0)$, $\Bar{C}_R(\epsilon)<U^+(h_+)$. Moreover, from the Mean Value Theorem, for such $\epsilon$, there exists $|\theta|\in (0, \epsilon)$ such that 
\begin{align*}
    0=\Tilde{F}_I\big(\Bar{C}(\epsilon), 0, \epsilon\big)=F_I\big(\Bar{C}_R(\epsilon), 0, \epsilon\big)+\Bar{C}_I(\epsilon)\partial_{c_I}\Tilde{F}_I\big(\Bar{C}_R(\epsilon)+i\theta, 0, \epsilon\big). 
\end{align*}
Similar as the argument in the proof of Lemma \ref{L:bifurcation-U+h}, using the $C^{1, \alpha}$ regularity of $F$ and $\Bar{C}(\epsilon)$, we obtain that 
\begin{align*}
    \Bar{C}_I(\epsilon)=  -\frac{F_I\big(\Bar{C}_R(\epsilon), 0, \epsilon\big)}{\partial_{c_I}\Tilde{F}_I\big(\Bar{C}_R(\epsilon)+i\theta, 0, \epsilon\big)}
    = \frac{\epsilon Y_I^+\big(\Bar{C}_R(\epsilon), 0\big)\big(U^+(0)-U^+(h_+)+O(\epsilon)\big)^2}{\partial_c F\big(U^+(h_+), 0, 0\big)+O(\epsilon)}. 
\end{align*}
The proof of statement (1) is completed. 

Let $K:=k^2$. To prove the statement (2), we notice that the proof for the case of $\partial_K F\big(U^+(h_+), 0, 0\big)<0$ is similar as the one in Lemma \ref{L:bifurcation-U+h}. It remains to consider the case of $\partial_K F\big(U^+(h_+), 0, 0\big)=0$. By \eqref{E:p-ep-F-h+-0}, there exist $0<K_0\ll 1$ and $\Gamma\in C^\infty([-K_0, K_0], \mathbb{R})$  such that
\begin{align*}
    F\big(U^+(0), k, \Gamma(K)\big)=0, \quad \Gamma(0)=0.
\end{align*}
Moreover, by a direct computation, $\partial_{KK}\Gamma(0)=-\frac{\partial_{KK}F}{\partial_\epsilon F}\big(U^+(h_+), 0, 0 \big)>0$. Let $k_0:=\sqrt{K_0}$. For any $k\in (0, k_0)$, we have $\Gamma(K)>0$ and $\Gamma'(K)>0$. Let $\epsilon_0:=\sup_{(0, k_0)}\Gamma (k)$,  $K(\epsilon):=\Gamma^{-1}(\epsilon)>0$, and $k(\epsilon):=\sqrt{K(\epsilon)}$.  Then for each $\epsilon\in (0, \epsilon_0)$, 
\begin{align*}
    &F\big(U^+(h_+), k(\epsilon), \epsilon\big)=0,\\
    \partial_c F\big(U^+(h_+), k(\epsilon),& \epsilon\big)=\partial_c F\big(U^+(h_+), 0, 0\big)+O(\epsilon)>0. 
\end{align*}
Moreover, we use \eqref{E:partial-k-Y+} to compute that
\begin{align*}
    \partial_K F\big(U^+(h_+), k(\epsilon), \epsilon\big)=&\partial_K F\big(U^+(h_+), k(\epsilon), 0\big)-\epsilon\partial_K Y^+\big(U^+(0)-U^+(h_+)\big)^2\\
    < & \partial_K F\big(U^+(h_+), 0, 0\big)+O(\epsilon)<0. 
\end{align*}
The rest of the proof is similar as the proof for Lemma \ref{L:bifurcation-U+h}(1a). \end{proof}

Using a similar strategy, we can also prove the following result for small $\epsilon$ if the one fluid free boundary problem has a neutral mode at $\big(U^+(0), k_0\big)$ with $k_0>0$ or $\big(U^+(0), 0\big)$. 

\begin{lemma}\label{L:bifurcation-U+0}
Suppose that $U^\pm$ satisfy \eqref{E:U monotone},  $(U^\pm)'>0$, $U^-(0)<U^+(0)$, $(U^\pm)''\neq 0$, and $F\big(U^+(0), k_0, 0\big)=0$ for some $k_0>0$. Then the following hold. 
\begin{enumerate}
    \item Suppose that $\partial_k F\big(U^+(0), k_0, 0\big)\neq 0$. Then there exist $\epsilon_0>0$ such that for each $\epsilon\in (0, \epsilon_0)$, there exist $\delta_0>0$, $k(\epsilon)\in C^{1, \alpha}([0, \epsilon_0], \mathbb{R}^+)$, and $\mathcal{C}^+(k)\in C^{1, \alpha}\big([k(\epsilon)-\delta_0, k(\epsilon)+\delta_0]\big)$ for any $\alpha\in [0, 1)$ such that  
\begin{align*}
    F\big(U^+(0), \pm k(\epsilon), \epsilon\big)=0,& \quad k(0)=k_0, \\
    F\big(\mathcal{C}^+(k), \pm k, \epsilon\big)=0, \; \forall |k|\in [k(\epsilon)-\delta_0,\;& k(\epsilon)+\delta_0], \text{and} \; \mathcal{C}^+\big(k(\epsilon)\big)=U^+(0). 
\end{align*}
Let $\rho:=\min\{U^+(h_+)-U^+(0), U^+(0)-U^-(0)\}$. For $c_I\in [0, \rho]$ and $|c_R-U^+(0)|\leq \rho$, $\mathcal{C}^+(k)$ is the only root of $F(\cdot, k, \epsilon)=0$ with $|k|\in [k(\epsilon)-\delta_0, k(\epsilon)+\delta_0]$. Moreover, the following hold. 
\begin{enumerate}
    \item If $\partial_k F\big(U^+(0), k_0, 0\big)<0$, then 
    \begin{enumerate}
        \item $k'(\epsilon)>0$ and $k(\epsilon)>k_0$;
        \item $\mathcal{C}^+(k)<U^+(0)$ for any $|k|\in [k(\epsilon)-\delta_0, k(\epsilon)\big)$ and 
        \begin{align*}
            (U^+)''\mathcal{C}_I^+(k)<0,\quad \mathcal{C}_R^+(k)>U^+(0), \; \forall |k|\in \big(k(\epsilon), k(\epsilon)+\delta_0].
        \end{align*}
    \end{enumerate}
    \item $If \partial_k F\big(U^+(0), k_0, 0\big)>0$, then 
    \begin{enumerate}
        \item $k'(\epsilon)<0$ and $k(\epsilon)<k_0$;
        \item $\mathcal{C}^+(k)<U^+(0)$ for any $|k|\in \big(k(\epsilon), k(\epsilon)+\delta_0]$ and 
        \begin{align*}
           (U^+)''\mathcal{C}_I^+(k)<0 , \quad \mathcal{C}_R^+(k)>U^+(0), \; \forall |k|\in [k(\epsilon)-\delta_0, k(\epsilon)\big). 
        \end{align*}
    \end{enumerate}
\end{enumerate}
\item Suppose that $\partial_k F\big(U^+(0), k_0, 0\big)=0$. Then there exist $\epsilon_1>0$ such that for each $\epsilon\in (0, \epsilon_1]$, there exist $k_2(\epsilon)>k_0>k_1(\epsilon)\geq 0$, $\delta>0$, and $C^{1, \alpha}$ functions $\mathcal{C}_{1, 2}:[k_{1, 2}(\epsilon)-\delta, k_{1, 2}(\epsilon)+\delta]\rightarrow \mathbb{C}$ for any $\alpha\in [0, 1)$ such that 
\begin{align*}
    F\big(U^+(0), \pm k_{1, 2}(\epsilon), \epsilon\big)=0, \quad \mathcal{C}_{1, 2}^+\big(\pm k_{1, 2}(\epsilon)\big)=U^+(0), \\
    F\big(\mathcal{C}_{1, 2}^+(k), \pm k, \epsilon\big)=0, \quad \forall |k|\in [k_{1, 2}(\epsilon)-\delta, k_{1, 2}(\epsilon)+\delta]. 
\end{align*}
In addition, $\mathcal{C}_{1, 2}^+(k)$ satisfy that 
\begin{align*}
    \mathcal{C}^+_1(k)<U^+(0),\; \forall |k|\in \big(k_1(\epsilon), k_1(\epsilon)+\delta],& \quad \mathcal{C}^+_2(k)<U^+(0), \;\forall |k|\in [k_2(\epsilon)-\delta, k_2(\epsilon)\big), \\
    \mathcal{C}^+_{1_ R}(k)>U^+(0), \quad (U^+)''&\mathcal{C}^+_{1_I}(k)<0, \; \forall |k|\in [k_1(\epsilon)-\delta, k_1(\epsilon)\big),\\
    \mathcal{C}^+_{2_ R}(k)>U^+(0), \quad (U^+)''&\mathcal{C}^+_{2_I}(k)<0, \; \forall |k|\in \big(k_2(\epsilon), k_2(\epsilon)+\delta].
\end{align*}
Let $\rho:=\min \{U^+(h_+)-U^+(0), U^+(0)-U^-(0)\}$. Then for $|c_R-U^+(0)|<\rho$ and $c_I\in [0, \rho]$, $\mathcal{C}_1^+(k)$ is the unique root of $F(\cdot, k, \epsilon)=0$ with $|k|\in [k_1(\epsilon)-\delta, k_1(\epsilon)+\delta]$ and $\mathcal{C}_2^+(k)$ is the unique root of $F(\cdot, k, \epsilon)=0$ with $|k|\in [k_2(\epsilon)-\delta, k_2(\epsilon)+\delta]$. 
\end{enumerate}
\end{lemma}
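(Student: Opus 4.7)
My plan is to mirror the argument of Lemma~\ref{L:bifurcation-U+h} at the lower endpoint $U^+(0)$ of the range of $U^+$. The first task is to verify that the two critical signs needed to start the bifurcation analysis — namely $\partial_c F$ and $\partial_\varepsilon F$ at $(U^+(0),k_0,0)$ — are both strictly positive. Since $U^+(0)>U^-(0)$ and $(U^-)'>0$, monotonicity gives $U^+(0)\notin U^-([-h_-,0])$, so the $Y^-$ piece of $F$ is real-analytic in $c$ near $U^+(0)$; the $Y^+$ piece contributes $\bigl(U^+(0)-c\bigr)^2 Y^+(c,k)\to 0$ as $c\to U^+(0)$ by \eqref{E:Y-+log}. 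Hence $\partial_\varepsilon F(U^+(0),k_0,0)=g>0$ from \eqref{E:p-ep-F}. Also, at $\varepsilon=0$ the hypothesis $F(U^+(0),k_0,0)=0$ together with Theorem~1.1 of \cite{LZ} and Lemma~\ref{L:nlm-e-0} forces $U^+(0)$ to be the right branch $c^+(k_0)$ of the one-fluid problem, so by \eqref{E:pcF-esp-0-c+} we have $\partial_c F(U^+(0),k_0,0)>0$.

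Next, in case (1) when $\partial_k F(U^+(0),k_0,0)\neq 0$, the Implicit Function Theorem applied to $F(U^+(0),\,\cdot\,,\,\cdot\,)$ yields $k(\varepsilon)\in C^\infty([0,\varepsilon_0])$ with $k(0)=k_0$ and $k'(0)=-\partial_\varepsilon F/\partial_k F$, giving the sign statement for $k(\varepsilon)$ relative to $k_0$. Extending $F$ to $\tilde F$ as in \eqref{E:extend-F} and invoking Cauchy--Riemann, one obtains
\[
D_c\tilde F\bigl(U^+(0),k(\varepsilon),\varepsilon\bigr)=\bigl(\partial_c F(U^+(0),k_0,0)+O(\varepsilon)\bigr)\,I_{2\times 2},
\]
which is invertible for small $\varepsilon$, so a second IFT produces $\mathcal{C}^+(k)\in C^{1,\alpha}$ with $\mathcal{C}^+(k(\varepsilon))=U^+(0)$ and $(\mathcal{C}^+)'(k(\varepsilon))=-\partial_k F/\partial_c F$. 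A parallel IFT on the smooth function $F_R$ over the side $c<U^+(0)$ (where $F$ is real-analytic because $c\in(U^-(0),U^+(0))$ misses both ranges), followed by the uniqueness clause of the IFT, identifies $\mathcal{C}^+$ with the genuine root branch of $F$ and keeps it real whenever $\mathcal{C}^+<U^+(0)$. On the other side where $\mathcal{C}_R^+(k)>U^+(0)$, a Mean Value Theorem expansion of $\tilde F_I$ gives
\[
\mathcal{C}_I^+(k)=-\frac{F_I\bigl(\mathcal{C}_R^+(k),k,\varepsilon\bigr)}{\partial_{c_I}\tilde F_I\bigl(\mathcal{C}_R^+(k)+i\theta,k,\varepsilon\bigr)};
\]
because $\mathcal{C}_R^+(k)>U^+(0)>U^-(0)$ forces $Y^-_I\equiv 0$ via Lemma~\ref{L:cite-yY-}, while \eqref{E:YI+} gives $\mathrm{sgn}(Y^+_I)=-\mathrm{sgn}((U^+)'')$, and the denominator equals $\partial_c F(U^+(0),k_0,0)+o(1)>0$, so $(U^+)''\mathcal{C}_I^+(k)<0$ follows.

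For case (2) with $\partial_k F(U^+(0),k_0,0)=0$ (equivalently $\partial_K F=0$ at $k_0>0$), the IFT in $\varepsilon$ based on $\partial_\varepsilon F>0$ produces $\Gamma(K)\in C^\infty$ with $F(U^+(0),k,\Gamma(K))=0$ and $\Gamma(k_0^2)=0$, and the strict concavity $\Gamma''(k_0^2)=-\partial_{KK}F/\partial_\varepsilon F>0$ from \eqref{E:p-KK-F} implies that for every sufficiently small $\varepsilon>0$ there are exactly two preimages $k_1(\varepsilon)<k_0<k_2(\varepsilon)$ with $\partial_K F(U^+(0),k_i(\varepsilon),\varepsilon)$ of opposite (non-zero) signs. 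Applying the case (1) analysis at each $k_i(\varepsilon)$ produces the two branches $\mathcal{C}_{1,2}^+$ with the stated sign pattern. Finally, for uniqueness on the box $|c_R-U^+(0)|<\rho$, $c_I\in[0,\rho]$, one argues as in Lemma~\ref{L:no singular mode}: when $\varepsilon=0$ the only roots of $F(\cdot,\cdot,0)$ in the strip $c_R>U^-(0)$ are the branch $c^+(k)$, and a winding number computation $n(k)=\oint_{\partial\Omega}\partial_c F/F\,dc$ on a thin neighborhood of $U^+(0)$ (using that $F(c,k,\varepsilon)\to F(c,k,0)$ in $C^1$) shows that only one additional root enters near $k(\varepsilon)$, namely the branch just constructed.

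The main obstacle, as in Lemma~\ref{L:bifurcation-U+h}, is the consistent bookkeeping of signs in the imaginary-part expansion: the four signs $\mathrm{sgn}(\partial_k F)$, $\mathrm{sgn}((U^+)'')$, $\mathrm{sgn}(\partial_\varepsilon F)$, and $\mathrm{sgn}(\partial_c F)$ must line up so that $\mathcal{C}^+$ crosses $U^+(0)$ in the correct direction and picks up an imaginary part of the prescribed sign on the correct side of $k(\varepsilon)$. The geometry differs from Lemma~\ref{L:bifurcation-U+h} only in that $x_c^+=(U^+)^{-1}(c_R)$ tends to $0^+$ rather than to $h_+^-$, which affects the positivity estimate for $y^+(c,k,x_c^+)^2$ but not the final sign of $Y^+_I$.
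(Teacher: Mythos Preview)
Your proposal is correct and follows essentially the same approach as the paper, which itself simply states that Lemma~\ref{L:bifurcation-U+0} is proved ``using a similar strategy'' to Lemma~\ref{L:bifurcation-U+h}. Your adaptation is in fact slightly cleaner than the $U^+(h_+)$ case because at $c=U^+(0)$ the factor $(U^+(0)-c)^2$ kills the entire $Y^+$ contribution for every $k$, so $\partial_\epsilon F(U^+(0),k,\epsilon)=g>0$ holds directly without the intermediate step $\partial_{\epsilon K}F>0$ that the paper uses at $U^+(h_+)$; one cosmetic slip is that you call $\Gamma''(k_0^2)>0$ ``strict concavity'' when you mean convexity.
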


\begin{lemma}
Suppose that $U^\pm$ satisfy \eqref{E:U monotone}, $U^-(0)<U^+(0)$, $(U^\pm)'>0$, $(U^+)''<0$, and $F\big(U^+(0), 0, 0\big)=0$. Then the following hold. 
\begin{enumerate}
    \item There exist $\epsilon_0>0$ and  $C^{1, \alpha}$ function $\Bar{C}(\epsilon):[0, \epsilon_0]\rightarrow \mathbb{C}$ for any $\alpha\in [0, 1)$ such that 
    \begin{align*}
        F\big(\Bar{C}(\epsilon), 0, \epsilon\big)=0,\quad \Bar{C}(0)=U^+(0),\\
        \Bar{C}(\epsilon)\in \big(U^-(0), U^+(0)\big), \; \forall
        \epsilon\in (0, \epsilon_0].
    \end{align*}
    \item Let $K:=k^2$. If $\partial_K F\big(U^+(0), 0, 0\big)\leq 0$, then there exist $\epsilon_1>0$ and smooth function $k(\epsilon):[0, \epsilon_1]\rightarrow \mathbb{R}$ such that for each $\epsilon\in (0, \epsilon_1]$, 
    \begin{align*}
        F\big(U^+(0), \pm k(\epsilon), \epsilon\big)=0,\quad  k(\epsilon)>0,\; \text{and}\; k(0)=0.
    \end{align*}
    In addition, there exist $\delta_0>0$ and $C^{1, \beta}$ function $\mathcal{C}^+(k):[k(\epsilon)-\delta_0, k(\epsilon)+\delta_0]\rightarrow \mathbb{C}$ for any $\beta\in [0, 1)$ such that 
    \begin{align*}
        F\big(\mathcal{C}^+(k), \pm k, \epsilon\big)=0,\quad  \mathcal{C}^+\big(k(\epsilon)\big)=U^+(0),& \quad 
        \mathcal{C}^+(k)<U^+(0), \; \forall |k|\in [k(\epsilon)-\delta_0, k(\epsilon)\big), \\
        (U^+)''\mathcal{C}_I^+(k)<0, \quad  \mathcal{C}_R^+(k)&>U^+(0), \; \forall |k|\in \big(k(\epsilon), k(\epsilon)+\delta_0].
        \end{align*}
    Let $\rho:=\min\{U^+(0)-U^-(0), U^+(h_+)-U^+(0)\}$. For $c_I\in [0, \rho)$, $|c_R-U^+(0)|<\rho$, $\mathcal{C}^+(k)$ is the unique root of $F(\cdot, k, \epsilon)=0$ with $|k|\in [k(\epsilon)-\delta_0, k(\epsilon)+\delta_0]$. 
\end{enumerate}
\end{lemma}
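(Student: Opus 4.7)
\textit{Proof Proposal.} The plan is to mimic closely the structure used in Lemma \ref{L:bifurcation-U+h-k=0}, replacing the role of $U^+(h_+)$ by $U^+(0)$ and handling the extra mild singularity coming from the logarithmic blow-up of $Y^+$ at $c=U^+(0)$. The key quantitative inputs we will need are: (a) $\partial_c F(U^+(0),0,0)>0$, (b) $\partial_\epsilon F(U^+(0),0,0)=g>0$, and (c) the concavity $\partial_{KK}F<0$ from \eqref{E:p-KK-F}. Item (a) follows from \eqref{E:pcF-esp-0-c+} once one observes that $F\big(U^+(0),\cdot,0\big)$ coincides with the one-fluid function, so $U^+(0)$ is a simple zero of $F(\cdot,0,0)$ and lies to the right of $U^-(0)$; continuity of $F$ at $c=U^+(0)$ (from Lemma \ref{L:F-regularity}, using that $(U^+(0)-c)^2Y^+(c,k)\to 0$) makes the identification work. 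Item (b) follows from \eqref{E:p-ep-F} and the same vanishing at $c=U^+(0)$.

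For part (1), I would extend $F$ to a $C^{1,\alpha}$ function $\Tilde F$ into a neighborhood of $\big(U^+(0),0,0\big)\in\mathbb C\times\mathbb R\times\mathbb R$ as in \eqref{E:extend-F}. The Cauchy--Riemann equations on $\{c_R<U^+(0)\}\cap\{c_I=0\}$ give
\[
D_c\Tilde F\big(U^+(0),0,0\big)=\partial_cF\big(U^+(0),0,0\big)I_{2\times 2},
\]
which is invertible. The Implicit Function Theorem then yields a $C^{1,\alpha}$ complex-valued $\Bar C(\epsilon)$ on $[0,\epsilon_0]$ with $\Bar C(0)=U^+(0)$. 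To show $\Bar C$ is real in $(U^-(0),U^+(0))$, I would apply the IFT separately to the $C^1$ real equation $F_R=0$ (using that $\partial_{c_R}F_R=\partial_c F>0$), and invoke that $F_I\equiv 0$ on the real interval $(U^-(0),U^+(0))$ (both $Y^-_I$ and $Y^+_I$ vanish there) together with uniqueness from IFT. Finally $\Bar C'(0)=-\partial_\epsilon F/\partial_c F=-g/\partial_c F<0$ gives $\Bar C(\epsilon)<U^+(0)$ for small $\epsilon>0$, and $\Bar C(\epsilon)>U^-(0)$ by continuity since $U^-(0)<U^+(0)$.

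For part (2), I would first construct $k(\epsilon)$. If $\partial_K F(U^+(0),0,0)<0$, the IFT applied to $F(U^+(0),k,\epsilon)=0$ in the $K$-variable gives $K(\epsilon)$ with $K'(0)=-\partial_\epsilon F/\partial_K F>0$ and $k(\epsilon):=\sqrt{K(\epsilon)}$ is smooth with $k(0)=0$. If $\partial_K F(U^+(0),0,0)=0$, I would instead solve $F(U^+(0),k,\Gamma(K))=0$ for $\Gamma$ using $\partial_\epsilon F>0$; then $\Gamma(0)=0$, $\Gamma'(0)=0$, and $\Gamma''(0)=-\partial_{KK}F/\partial_\epsilon F>0$ by \eqref{E:p-KK-F}, so $\Gamma$ is strictly positive for $K\ne 0$ near $0$ and may be inverted on one side to get $K(\epsilon)>0$ and $k(\epsilon)=\sqrt{K(\epsilon)}>0$. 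In both cases, at $(U^+(0),k(\epsilon),\epsilon)$ we still have $\partial_cF>0$ by perturbation of the value at $\epsilon=0$, and $\partial_K F<0$ (in the first case automatic, in the second from $\partial_{KK}F<0$).

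With these in hand, the bifurcation of $\mathcal C^+(k)$ near $\big(U^+(0),k(\epsilon)\big)$ is produced by the same $\Tilde F$-IFT argument as in Lemma \ref{L:bifurcation-U+h}: invertibility of $D_c\Tilde F$ gives a $C^{1,\beta}$ branch $\mathcal C^+(k)$ with $\mathcal C^+(k(\epsilon))=U^+(0)$. The sign
\[
(\mathcal C^+)'\big(k(\epsilon)\big)=-\frac{2k(\epsilon)\,\partial_K F}{\partial_c F}>0
\]
shows $\mathcal C^+_R(k)$ crosses $U^+(0)$ upward at $k=k(\epsilon)$. Applying IFT to the real equation $F_R=0$ separately gives a real branch for $k<k(\epsilon)$, and uniqueness pins $\mathcal C^+$ to it there. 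For $k>k(\epsilon)$ with $\mathcal C^+_R$ slightly above $U^+(0)$, the Mean Value Theorem computation
\[
\mathcal C^+_I(k)=-\frac{F_I\big(\mathcal C^+_R(k),k,\epsilon\big)}{\partial_{c_I}\Tilde F_I+O(|\mathcal C^+_I|^\alpha)}=\frac{\epsilon\,(U^+(0)-U^+(h_+)+O(|k-k(\epsilon)|))^2\,Y^+_I\big(\mathcal C^+_R(k),k\big)}{\partial_c F\big(U^+(0),0,0\big)+O(\epsilon)+O(|k-k(\epsilon)|^\alpha)}
\]
combined with $Y^-_I\equiv 0$ on that range and $(U^+)''<0$ (so $Y^+_I>0$ by \eqref{E:YI+}) yields $(U^+)''\mathcal C^+_I(k)<0$. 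Uniqueness of the root on the rectangle $|c_R-U^+(0)|<\rho$, $c_I\in[0,\rho]$ follows from a standard Rouch\'e / winding-number argument exactly as in the proof of Lemma \ref{L:no singular mode}, using that $F$ has no other real roots in that strip for $k$ near $k(\epsilon)$ by Lemma \ref{L:nlm-e-0}. The main technical obstacle, as before, is that the logarithmic singularity of $Y^+$ at $c=U^+(0)$ prevents analyticity; but the $C^{1,\alpha}$ regularity of $F$ from Lemma \ref{L:F-regularity}, together with the vanishing weight $(U^+(0)-c)^2$, is exactly what lets all Implicit Function and Mean Value arguments above go through with $C^{1,\beta}$ output.
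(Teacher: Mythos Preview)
Your approach is essentially identical to the paper's: the paper gives no explicit proof of this lemma, simply prefacing it (and Lemma~\ref{L:bifurcation-U+0}) with the remark that one uses ``a similar strategy'' to Lemmas~\ref{L:bifurcation-U+h} and~\ref{L:bifurcation-U+h-k=0}. You correctly identify the key adaptation---that in part~(1) the branch $\bar C(\epsilon)$ moves \emph{downward} into the real interval $\big(U^-(0),U^+(0)\big)$ where $F_I\equiv 0$, so $\bar C$ stays real (unlike the $U^+(h_+)$ case where it enters $U^+\big((0,h_+)\big)$ and picks up an imaginary part).

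One slip to fix: in your Mean Value Theorem display for $\mathcal C^+_I(k)$ you copied the factor $\big(U^+(0)-U^+(h_+)+O(|k-k(\epsilon)|)\big)^2$ verbatim from Lemma~\ref{L:bifurcation-U+h}. Here the bifurcation point is $U^+(0)$, not $U^+(h_+)$, so the correct factor is $\big(U^+(0)-\mathcal C^+_R(k)\big)^2=O\big(|k-k(\epsilon)|^2\big)$. This does not affect your sign conclusion---the product $\epsilon\cdot\big(U^+(0)-\mathcal C^+_R(k)\big)^2\cdot Y^+_I$ is still strictly positive for $k$ slightly above $k(\epsilon)$ since $Y^+_I>0$ when $(U^+)''<0$ by \eqref{E:YI+}---but the displayed expression as written is wrong.
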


In the following, we prove Theorem \ref{T:e-small-g*}.

\noindent\textit{Proof of Theorem \ref{T:e-small-g*}}. 
When $k=0, \epsilon=0$ and $c>U^-(0)$, $y^-$ is in the form of \eqref{E:Y-k-0outU-}. Then $F(c, 0, 0)=0$ corresponds to 
\begin{align*}
    \big(\int_{-h_-}^0 \frac{1}{(U^--c)^2}dx_2\big)^{-1}=g.
\end{align*}
Hence, $c_0$ is the only root of $F(\cdot, 0, 0)=0$ in $\{c_R>U^-(0)\}$. 

We first consider the case of $c_0>U^+(h_+)$. By \eqref{E:pcF-esp-0-c+}, 
\begin{equation}\label{E:F-0h+-ep0<0}
    F(c, 0, 0)<F\big(U^+(h_+), 0, 0\big)<F(c_0, 0, 0)=0, \; \forall c\in \big(U^-(0), U^+(h_+)\big).
\end{equation}
There exists $0<\epsilon_0\ll 1$ such that for each $\epsilon\in (0, \epsilon_0)$, $|F(c, 0, \epsilon)|>0$ where $c\in \{U^+(0), U^+(h_+) \}$. Let $m(c)$ be defined in \eqref{D:m(c)}. Then \eqref{E:A-F<0} holds for small $\epsilon$. Let $c^+(k)$ be the one obtained in Lemma \ref{L:c-large k}. Then, Lemma \ref{L:no singular mode} implies that $c^+(k)$ can be extended for all $k\in \mathbb{R}$ and it is the only singular and non-singular mode in $\{c_R>U^-(0)\}$. Statement (1a) is proved. 

Next, we shall prove statement (1b).  Let $K:=k^2$. Suppose that $c_0\in \big(U^+(0), U^+(h_+)]$. Let $c^+_0(k)$ be the $c^+(k)$ obtained in Lemma \ref{L:c-large k} when $\epsilon=0$. By Remark \ref{R:c-pm-extend}, $c^+_0(k)$ can be extended for all $k\in \mathbb{R}$. Then there exists $k_0>0$ such that $c^+_0(k_0)=U^+(h_+)$. We use \eqref{E:p-KK-F}  and compute that \begin{align*}
    \partial_K F\big(U^+(h_+), k_0, 0\big)<\partial_K F\big(U^+(h_+), 0, 0\big)=\big(U^-(0)-U^+(h_+) \big)\partial_K Y^-\big(U^+(h_+), 0\big)-\frac{\sigma}{\rho^-}.
\end{align*}
Using \eqref{E:pkY-0-c-out} and the assumption \eqref{A:u+h+-bifurcation}, we obtain that
\begin{align*}
    \partial_K F\big(U^+(h_+), 0, 0\big)\leq \int_{-h_-}^0 \big(U^-(x_2)-U^+(h_+)\big)^2dx_2-\frac{\sigma}{\rho^-}<0.
\end{align*}
This implies that  
\begin{equation}\label{E:pKF-h+-ep0}
    \partial_K F\big(U^+(h_+), k_0, 0\big)<0.
\end{equation}
For $\epsilon=0$, we use \eqref{E:pcF-esp-0-c+} to obtain that 
\begin{align*}
    (c^+_0)'(k_0)=-\frac{2k \partial_K F}{\partial_c F}\big( U^+(h_+), k_0, 0\big)>0. 
\end{align*}
Since $c^+_0(k)$ is analytic in $k\in \mathbb{R}$, this implies that $k_0$ is the unique root of $F\big(U^+(h_+), \cdot, 0\big)=0$. By Lemma \ref{L:bifurcation-U+h}, there exist $\delta_0>0$, $k(\epsilon)\in C^{1, \beta}\big([0, \epsilon_1], \mathbb{R}^+\big)$, and $\mathcal{C}^+(k)\in C^{1, \alpha}\big([k(\epsilon)-\delta_0, k(\epsilon)+\delta_0], \mathbb{C} \big)$ for any $\alpha, \beta\in [0, 1)$ such that $k(\epsilon)>k_0$, $F\big(U^+(h_+), k(\epsilon), \epsilon\big)=0$, $F\big(\mathcal{C}^+(k), k, \epsilon \big)=0$, and $\mathcal{C}^+(k(\epsilon))=U^+(h_+)$. Since $(U^+)''<0$, $\mathcal{C}_I^+(k)>0$, $\mathcal{C}_R^+(k)<U^+(h_+)$ for any $k\in [k(\epsilon)-\delta_0, k(\epsilon)\big)$ and $\mathcal{C}^+(k)>U^+(h_+)$ for any $k\in \big(k(\epsilon), k(\epsilon)+\delta_0]$. By Remark \ref{R:c-pm-extend}, $c^+(k)$ obtained in Lemma \ref{L:c-large k} can be extended to be an analytic function for all $|k|>k(\epsilon)$ and $C^{1, \alpha}$ for any $\alpha\in [0, 1)$ in $k\geq k(\epsilon)$ such that $F\big(c^+(k), k, \epsilon\big)=0$. The uniqueness of solutions by the Implicit Function Theorem ensures that $\mathcal{C}^+(k)=c^+(k)$ for any $k\in \big(k(\epsilon), k(\epsilon)+\delta_0]$. Moreover, there exists an open $B_0$ satisfying $\big(U^+(h_+), k(\epsilon)\big)\in B_0\subset \mathbb{C}$ such that $F(c, k, \epsilon)\neq 0$ for any $c\in \overline{B_0}$ and $|k|\in [k(\epsilon)-\delta_0, k(\epsilon)-\frac{\delta_0}{4}]$. By the smallness of $\epsilon_1$, \eqref{E:pKF-h+kep-ep} holds. Then \eqref{E:p-KK-F} implies that $k(\epsilon)$ is the unique root of $F\big(U^+(h_+), \cdot, \epsilon\big)=0$ for all $k\in \mathbb{R}$. We use \eqref{E:pkY-0-c-out},  the assumption \eqref{A:u+h+-bifurcation},  $(U^-)'>0$, and $U^-(0)<U^+(0)$ to compute that 
\begin{align*}
    \partial_K F(c, 0, 0)=&\big(U^-(0)-c\big)^2\partial_K Y^-(c, 0)-\frac{\sigma}{\rho^-}\\ \leq & \int_{-h_-}^0 \big(U^-(x_2)-c\big)^2dx_2-\frac{\sigma}{\rho^-}\\
    \leq& \int_{-h_-}^0 \big(U^-(x_2)-U^+(h_+)\big)^2dx_2-\frac{\sigma}{\rho^-}<0, \; \forall c\in \big(U^-(0), U^+(h_+)\big). 
\end{align*}
Hence, by \eqref{E:p-KK-F}, for any $k\in \mathbb{R}$ and $c\in \big(U^-(0), U^+(h_+)\big)$, $\partial_K F(c, k, 0)<\partial_K F(c, 0, 0)<0$. Since $0<\epsilon_1\ll 1$, for any $\epsilon\in [0, \epsilon_1)$, $\partial_K F(c, k, \epsilon)<0$ for all $k\in \mathbb{R}$ and $c\in \big(U^-(0), U^+(h_+)\big)$. Since $F(c, k, \epsilon)=F(c, k, 0)+\epsilon g$ and $c_0>U^+(0)$, using the smallness of $\epsilon_1$ and \eqref{E:F-0h+-ep0<0}, we obtain that 
\begin{equation}\label{E:F-0-k-ep-neq0}
    F(c, k, \epsilon)<0, \; \forall\epsilon\in [0, \epsilon_1), \; \forall k\in \mathbb{R}, \; \forall c\in \big(U^-(0), U^+(0)].
\end{equation}
Since $F_I(c, k, \epsilon)\neq 0$ for any $c\in U^+\big((0, h_+)\big)$, $F(c, k, \epsilon)\neq 0$ for any $c\in U^+\big([0, h_+]\big)$, $|k|\leq  k(\epsilon)-\frac{\delta_0}{4}$. Hence, there exists an open set $B_1$ satisfying that $U^+([0,h_+])\subset B_1\subset \mathbb{C}$ and $B_1\cap B_0\neq \emptyset$ such that $F(c, k, \epsilon)\neq 0$ for any $c\in B_1$ and $|k|\leq  k(\epsilon)-\frac{\delta_0}{4}$.  Because of \eqref{E:F-0-k-ep-neq0} and Lemma \ref{L:nlm-e-0}, there exists $\gamma>0$ such that $|F(c, k, 0)|> 0$ for any $c\in B_2:=\{U^+(0)-\gamma\leq  c_R\leq U^+(0)+\gamma\}$ and $k\in \mathbb{R}$. The smallness of $\epsilon_1$ implies that $|F(c, k, \epsilon)|> 0$ for $c\in B_2$ and $k\in \mathbb{R}$. Let $B_3:=[U^+(0), U^+(h_+)]\times [0, U^+(h_+)-U^-(0)]\subset \mathbb{C}$. Consider $\Omega:=B_3\setminus (B_0\cup B_1\cup B_2)$. Thanks to Theorem \ref{T:semicircle}, we obtain that $F(c, k, \epsilon)\neq 0$ for any $c\in \partial \Omega$ and $|k|\leq k(\epsilon)-\frac{\delta_0}{2}$. We consider $n(k)$ which is defined in \eqref{E:degree}. Lemma \ref{L:bifurcation-U+h} implies that $n\big(k(\epsilon)\big)=1$. Since $n(k)$ is continuous for $|k|\leq k(\epsilon)-\frac{\delta_0}{2}$, $n(k)\equiv 1$ for $|k|\leq k(\epsilon)-\frac{\delta_0}{2}$.  Lemma \ref{L:bifurcation-U+h} shows that $\mathcal{C}^+(k)$ is the unique zero of $F(\cdot, k, \epsilon)$ for $|k|\in [k(\epsilon)-\delta_0, k(\epsilon)+\delta_0]$. Hence, $n(k)\equiv 1$ for all $|k|\leq k(\epsilon)$. This implies that $\mathcal{C}^+(k)$ can be extended to be a $C^{1, \alpha}$ function (for any $\alpha\in [0, 1)$) for all $|k|\leq k(\epsilon)$ and analytic except at $k=k(\epsilon)$. Let $c^+(k)=\mathcal{C}^+(k)$ for $|k|<k(\epsilon)$. Statement (1b) is proved. The proof for statement (1c) is similar. 

To prove statement (2), let
\begin{align*}
    g_\#:=\max\{F(c, k, 0)+g\}. 
\end{align*}
Then statement (2a) directly follows from Theorem 1.1 in \cite{LZ} and a similar proof as the one for Statement (1a). The proof for Statement (2b) is similar as the one for Statement (1b). 
\hfill $\square$

\end{section}

\begin{section}{Acknowledgments}
The author would like to thank  Prof. Chongchun Zeng, for introducing the problem and suggestion. 
\end{section}

\bibliographystyle{abbrv}
\bibliography{reference}
\end{document}